\documentclass[a4paper,11pt]{amsart}
\usepackage[colorlinks, linkcolor=blue,anchorcolor=Periwinkle,
    citecolor=blue,urlcolor=Emerald]{hyperref}
\usepackage[all]{xy}
\SelectTips{cm}{}

\usepackage{graphicx}
\usepackage{psfrag}

\usepackage{mathtools}
\usepackage{tikz}
\usepackage{tikz-cd}
\usepackage{adjustbox}
\usepackage{extarrows}
\usepackage{multirow}

\usepackage{amssymb}
\usetikzlibrary{decorations.pathreplacing}
\usetikzlibrary{matrix,arrows}

\usepackage{enumitem}

\usepackage{stmaryrd}

\newcommand{\cf}{{cf.}\ }
\newcommand{\eg}{{e.g.}\ }
\newcommand{\ko}{\: , \;}
\newcommand{\ul}[1]{\underline{#1}}

\numberwithin{equation}{subsection}

\newtheorem{classification-theorem}[subsection]{Classification Theorem}
\newtheorem{decomposition-theorem}[subsection]{Decomposition Theorem}
\newtheorem{proposition-definition}[subsection]{Proposition-Definition}
\newtheorem{periodicity-conjecture}[subsection]{Periodicity Conjecture}

\newtheorem{theorem}{Theorem}
\numberwithin{theorem}{section}
\newtheorem{thmx}{Theorem}

\newtheorem{lemma}[theorem]{Lemma}
\newtheorem{proposition}[theorem]{Proposition}
\newtheorem{propx}[thmx]{Proposition}
\newtheorem{corollary}[theorem]{Corollary}
\theoremstyle{definition}

\theoremstyle{plain}

\newtheorem{question}[theorem]{Question}
\theoremstyle{definition}
\newtheorem{example}[theorem]{Example}
\theoremstyle{plain}

\newtheorem{notation}[theorem]{Notation}

\newcommand{\reminder}[1]{}

\newcommand{\CM}{\mathrm{CM}\,}

\newcommand{\Mod}{\mathrm{Mod}\,}

\newcommand{\per}{\mathrm{per}\,}

\newcommand{\op}{^{op}}

\newcommand{\diag}{\mathrm{diag}}

\newcommand{\Z}{\mathbb{Z}}
\newcommand{\N}{\mathbb{N}}

\newcommand{\iso}{\xrightarrow{_\sim}}
\newcommand{\liso}{\xleftarrow{_\sim}}

\newcommand{\V}{\mathbb{V}}

\newcommand{\Hom}{\mathrm{Hom}}

\newcommand{\Ext}{\mathrm{Ext}}

\newcommand{\End}{\mathrm{End}}

\newcommand{\ten}{\otimes}

\newcommand{\Ga}{\Gamma}

\newcommand{\Si}{\Sigma}

\renewcommand{\phi}{\varphi}

\begin{document}

\date{\today}

\title[Gorenstein tiled orders and incidence algebras of posets]{Triangle equivalences between Gorenstein\\[0.15cm] tiled orders and incidence algebras of posets}

\author{Osamu Iyama}
\address{Graduate School of Mathematical Sciences, The University of Tokyo, 3-8-1 Komaba \linebreak Meguro-ku Tokyo 153-8914, Japan}
\email{iyama@ms.u-tokyo.ac.jp}
\urladdr{https://www.ms.u-tokyo.ac.jp/~iyama/}

\author{Junyang Liu}
\address{School of Mathematical Sciences, University of Science and Technology of China, Hefei 230026, China}
\address{Graduate School of Mathematical Sciences, The University of Tokyo, 3-8-1 Komaba \linebreak Meguro-ku Tokyo 153-8914, Japan}
\email{liuj@imj-prg.fr}
\email{liu@ms.u-tokyo.ac.jp}
\urladdr{https://webusers.imj-prg.fr/~junyang.liu}

\begin{abstract}
We prove that for any $\N$-graded Gorenstein tiled order $A$, the stable category $\ul{\CM \!}^{\Z} A$ is triangle equivalent to the perfect derived category of the incidence algebra of a finite poset $\V_A \op$. Moreover, for a finite poset $P$, we prove that the incidence algebra of $P$ can be realized as the endomorphism algebra of a standard tilting object if and only if $P$ is either empty or has the maximum. We also study the behaviors of the corresponding poset under graded Morita equivalences and coverings of a Gorenstein tiled order. Finally, we classify Gorenstein tiled orders $A$ satisfying $|\V_A \op|\leq 3$.
\end{abstract}

\keywords{singularity category, Cohen--Macaulay module, tilting theory, Gorenstein tiled order, incidence algebra of poset, Artin--Schelter Gorenstein algebra}

\subjclass[2020]{18G80, 18G65, 16G50, 16G20}


\maketitle

\vspace*{-1cm}
\tableofcontents

\section{Introduction}

The study of (maximal) Cohen--Macaulay modules is one of the central subjects in commutative algebra and representation theory, \cf \cite{Auslander78, CurtisReiner81, Yoshino90, Simson92, LeuschkeWiegand12}. It has a strong connection with algebraic geometry, singularity theory, mathematical physics, and a number of important results are obtained by applying tilting theory for singularity categories associated with commutative and non-commutative Gorenstein rings, \cf~\cite{KajiuraSaitoTakahashi09, FutakiUeda11, KussinLenzingMeltzer13, HerschendIyamaMinamotoOppermann23, MoriUeyama16, LuZhu21, BuchweitzIyamaYamaura20, Iyama18, HiranoOuchi23, HaniharaIyama22, KimuraMinamotoYamaura25, IyamaKimuraUeyama24}. For a commutative Gorenstein ring $A$, the category $\CM A$ of Cohen--Macaulay modules is a Frobenius category. Its stable category is triangle equivalent to the singularity category associated with $A$ introduced by Buchweitz \cite{Buchweitz21} and Orlov \cite{Orlov04}.

Orders are one of the main objects studied in the theory of Cohen--Macaulay representations. Tiled orders are a class of orders whose representation theory as well as homological algebra has been well developed \cite{ZavadskiiKiricenko77, Simson92, KirichenkoKhibinaMashchenkoPlakhotnykZhuravlev14}. Denote $R=k[x]$ and $K=k[x, x^{-1}]$. For a finite set $\mathbb{I}$, a {\it tiled order} (over $R$) is an $R$-subalgebra of the form
\[
A=[Rx^{d(i, j)}] \subseteq {\rm M}_{\mathbb{I}}(K) \: ,
\]
where $d\colon \mathbb{I}\times \mathbb{I}\to \Z$ is a {\it quasi-metric} in the sense that it satisfies $d(i, i)=0$ and \linebreak $d(i, k)+d(k, j)\geq d(i, j)$ for all $i$, $j$, $k\in \mathbb{I}$. We regard $A$ as a $\Z$-graded algebra whose grading is determined by $\deg x=1$.
A basic tiled order $A$ is Gorenstein if and only if there exists a bijection $\nu \colon \mathbb{I} \to \mathbb{I}$ satisfying $d(\nu i, j)+d(j, i)=d(\nu i, i)$ for all $i$, $j\in \mathbb{I}$.

It is known \cite{ZavadskiiKiricenko77, Simson92} that Cohen--Macaulay representations of a tiled order $A$ are closely related to representations of a poset. More explicitly, let $\mathbb{P}_A$ be the poset of projective graded \linebreak $A$-submodules of $\begin{bmatrix}
K & \cdots & K
\end{bmatrix}$. Then the category $\CM\!^{\Z} A$ of Cohen--Macaulay $\Z$-graded \linebreak $A$-modules is equivalent to the category of finitely generated monomorphism representations of $\mathbb{P}_A$. In particular, the tiled order $A$ is of finite Cohen--Macaulay representation type if and only if $\mathbb{P}_A$ contains no critical posets whose Hasse quivers are given as follows, \cf \cite{Kleiner72}.
\[
\begin{tikzcd}[row sep=0.6em]
\bullet \\
\bullet \\
\bullet \\
\bullet
\end{tikzcd}
\qquad \qquad \qquad
\begin{tikzcd}[row sep=0.6em]
\bullet \arrow{r} & \bullet \\
\bullet \arrow{r} & \bullet \\
\bullet \arrow{r} & \bullet
\end{tikzcd}
\qquad \qquad \qquad
\begin{tikzcd}[row sep=0.6em]
\bullet & & \\
\bullet \arrow{r} & \bullet \arrow{r} & \bullet \\
\bullet \arrow{r} & \bullet \arrow{r} & \bullet
\end{tikzcd}
\]
\[
\begin{tikzcd}[row sep=0.6em]
\bullet & & & & \\
\bullet \arrow{r} & \bullet & & & \\
\bullet \arrow{r} & \bullet \arrow{r} & \bullet \arrow{r} & \bullet \arrow{r} & \bullet
\end{tikzcd}
\qquad \qquad \qquad
\begin{tikzcd}[row sep=0.6em]
\bullet \arrow{r} \arrow{dr} & \bullet & & \\
\bullet \arrow{r} & \bullet & & \\
\bullet \arrow{r} & \bullet \arrow{r} & \bullet \arrow{r} & \bullet
\end{tikzcd}
\]

The first aim of this article is to establish another strong connection between Cohen--Macaulay $\Z$-graded $A$-modules and representations of a poset by completing a result in \cite{IyamaKimuraUeyama24}. More explicitly, for an $\N$-graded Gorenstein tiled order $A$, the stable category $\ul{\CM \!}^\Z A$ has a standard tilting object $V=V_A$ whose endomorphism algebra $\Ga=\Ga_A$ is isomorphic to the incidence algebra of a certain finite poset $\V_A\op$ explicitly constructed.



\begin{thmx}[=Theorem~\ref{thm:tilting object}] \label{thm:A}
Let $A$ be an $\N$-graded Gorenstein tiled order. 
\begin{itemize}
\item[a)] The object $V$ in $\ul{\CM \!}^{\Z} A$ is tilting.
\item[b)] The endomorphism algebra of the standard tilting object $V$ in $\ul{\CM \!}^{\Z} A$ is isomorphic to $k\V_A\op$.
\item[c)] The poset $\V_A$ is either empty or a finite poset with minimum.
\end{itemize}
\end{thmx}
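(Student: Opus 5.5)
We follow the framework of \cite{IyamaKimuraUeyama24}. For an $\N$-graded Gorenstein ring $A$ of Gorenstein parameter $\ell$ (here $\ell$ is computed from the Nakayama permutation $\nu$ and the values $d(\nu i,i)$), the standard object is
\[
V=\bigoplus_{0\le j<\ell} A(j)_{\geq 0}
\]
viewed in $\ul{\CM \!}^{\Z} A$, or more precisely its Cohen--Macaulay approximation. The strategy has three steps.

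\emph{Step 1: structure of $V$.} Since $A=[Rx^{d(i,j)}]$ and every indecomposable graded projective is a shifted row $P_i(n)$, each truncation $P_i(n)_{\geq 0}$ is again a graded submodule of $\begin{bmatrix} K&\cdots&K\end{bmatrix}$ of the form $[Rx^{\max(d(i,j)-n,0)}]_j$. The map
\[
j\mapsto \max(d(i,j)-n,0)
\]
is again ``quasi-metric compatible'', so this module is Cohen--Macaulay. It is projective, and hence zero in the stable category, exactly when the function equals $d(i',\cdot)+c$ for some $i'$ and $c$. We therefore define $\V_A$ as the set of such functions that are non-projective. Up to isomorphism these are the indecomposable summands of $V$, ordered by inclusion of submodules up to shift.

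\emph{Step 2: (b).} For summands $X,Y$ of $V$ we compute $\ul{\Hom}(X,Y)$. Degree-zero graded homomorphisms between rank-one lattices in $\begin{bmatrix} K&\cdots&K\end{bmatrix}$ are multiplications by scalars. Hence $\Hom(X,Y)$ is $k$ if $X\subseteq Y$ and $0$ otherwise. We then need to show that no nonzero such map factors through a projective. Any factorization $X\to P\to Y$ with $P$ a graded projective lands inside $Y$ via some $P_{i'}(n')$ with $X\subseteq P_{i'}(n')\subseteq Y$. The $\N$-grading and the truncation at degree $0$ should force $P_{i'}(n')$ to have nonzero negative-degree part, which would contradict $P_{i'}(n')\subseteq Y$. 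This yields $\End(V)\cong k\V_A\op$, with composition given by inclusion.

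\emph{Step 3: (a) and (c).} Generation follows from the general result of \cite{IyamaKimuraUeyama24}: when $A_0$ has finite global dimension, $V$ generates $\ul{\CM \!}^{\Z} A$ whenever it is the standard object. Here $A_0$ is the incidence-type algebra of the poset $\{d=0\}$, so it has finite global dimension. The main obstacle is the vanishing
\[
\ul{\Hom}(V,V[n])=0 \qquad \text{for } n\neq 0 .
\]
The plan is to write $V[n]$ via syzygies of truncations and to use that $\Omega(P_i(j)_{\geq0})$ is described by the lower part $P_i(j)_{<0}$. Then $\ul{\Hom}(X,Y[n])$ reduces to a Hom between finite-dimensional graded modules concentrated in degrees of opposite signs, which vanishes. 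For $n<0$ the roles reverse and the same degree argument applies.

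For (c), if $\V_A\neq\emptyset$, we take the summand coming from $A(\ell-1)_{\geq0}$, or equivalently the smallest nonprojective truncation. We then show that every element of $\V_A$ contains it after using the Gorenstein identity
\[
d(\nu i,j)+d(j,i)=d(\nu i,i),
\]
which bounds all the functions below a single one. This gives the minimum.
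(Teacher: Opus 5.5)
The essential gap is in how you handle negative shifts. When all $b$-invariants $d(\nu i,i)$ are positive, parts a)--c) are already Theorem~8.2 of \cite{IyamaKimuraUeyama24}. The genuinely new case is a zero $b$-invariant, i.e.\ a Gorenstein parameter $p_{i'}=1$, and your proposal never separates it out. In that case your degree argument fails exactly at degree $0$. Following Lemma~5.6 of \cite{IyamaKimuraUeyama24}, one reduces $\ul{\Hom}(A(j)_{\ge0},\Si^{-l}e_{\nu i}A(j')_{\ge0})$ to $\Hom_k(Me_{\nu i'},Ne_{\nu i'})_0$, with $M=\Omega^{l-1}(e_{\nu i}A(j')_{\ge0})$ and $N=(A(j)/A(j)_{\ge0})\ten_A\omega$. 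Since $\omega e_{\nu i'}\simeq Ae_{i'}(-p_{i'})$, the module $Ne_{\nu i'}$ lives in degrees $\le p_{i'}-1$. For $p_{i'}=1$ its degree-$0$ part is $A_{j-1}e_{i'}\neq 0$, because the $i'$-th column of ${\rm v}(A)$ is zero. So the two sides are not in ``degrees of opposite signs''. The missing input is the whole content of Lemma~\ref{lem:tilting}. For $p_{i'}=1$ one has $e_{\nu i}A(j')e_{\nu i'}\simeq e_iA(j'-1+p_i)e_{i'}$. The bound $j'\le -p_i$ on the summands of $V$ then pushes $e_{\nu i}A(j')_{\ge0}e_{\nu i'}$, and hence $Me_{\nu i'}$, into strictly positive degrees. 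Positive shifts need nothing new, since they are covered by the silting statement in part~(1) of Theorem~5.1 of \cite{IyamaKimuraUeyama24}.

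Two other steps would not go through as written.

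In Step~2, suppose $X\subseteq e_{i'}A(n')\subseteq Y$ with $X,Y$ truncations. Then ${\rm v}(Y)\ge 0$ and the zero entries of ${\rm v}(X)$ force $n'=0$. So the intermediate projective has no negative-degree part, and your intended contradiction never appears. What actually excludes it is the Gorenstein identity:
\begin{itemize}
\item $e_{\nu a}A(s)_{\ge0}\subseteq e_{i'}A$ with $s\ge1$ gives $d(i',\nu a)=0$;
\item hence $d(\nu a,\nu^{-1}i')=d(i',\nu^{-1}i')$;
\item comparing $\nu^{-1}i'$-th entries then forces $d(i',\nu^{-1}i')=0$, i.e.\ $e_{i'}A=[R\ \cdots\ R]$.
\end{itemize}
That module is then projective and contained in no element of $\V_A$.

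For c), the minimum is $0$ only when all $b$-invariants are positive. If $d(\nu i,i)=0$, then $[R\ \cdots\ R]=e_{\nu i}A$ is projective, and you need the argument of Lemma~\ref{lem:maximum and minimum}. There, $d(\nu^2 i,\nu i)\ge 2$ is the largest $b$-invariant. The vector ${\rm v}(e_{\nu^2 i}A(d(\nu^2 i,\nu i)-1)_{\ge0})$ is the unit vector at $\nu i$. Every element of $\V_A$ dominates it, because each entry of the $\nu i$-th column of ${\rm v}(A)$ is the maximum of its row.

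Your ``$A(\ell-1)_{\ge0}$'' with a single parameter $\ell$ cannot single out this element, since the $p_i$ vary with $i$. For the same reason, your formula for $V$ is not the object in the statement. That object has the extra summand $[R\ \cdots\ R]$, and summands $e_{\nu i}A(j)_{\ge0}$ only for $1\le j\le -p_i$.
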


In particular, it provides large classes of stable equivalent Gorenstein tiled orders via posets. To study such posets, it is natural to ask the following questions.

\begin{question} \label{que:poset}
Let $P$ be a finite poset.
\begin{itemize}
\item[a)] When can $kP$ be realized as the algebra $\Ga_A$ for a basic $\N$-graded Gorenstein tiled order $A$?
\item[b)] Give a classification of the basic $\N$-graded Gorenstein tiled orders $A$ for which the algebra $\Ga_A$ is isomorphic to the incidence algebra $kP$?
\item[c)] Give a classification of the basic $\N$-graded Gorenstein tiled orders $A$ for which the stable category $\ul{\CM \!}^{\Z} A$ is triangle equivalent to $\per kP$?
\end{itemize}
\end{question}

To answer part~a) of Question~\ref{que:poset}, we give the following simple criterion.

\begin{thmx}[see Theorem~\ref{thm:realization of posets} for details] \label{thm:B}
Let $P$ be a finite poset. Then there exists an $\N$-graded Gorenstein tiled order $A$ such that the algebra $\Ga_A$ is isomorphic to the incidence algebra $kP$ if and only if $P$ is either the empty poset or has the maximum.
\end{thmx}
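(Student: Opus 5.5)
The statement splits into a necessity part and a sufficiency part.

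\emph{Necessity.} We use Theorem~\ref{thm:A}. If $\Ga_A\cong kP$, then $kP\cong k\V_A\op$. Incidence algebras of finite posets recover the poset up to isomorphism: the poset is read off from the Gabriel quiver together with the relations that all parallel paths agree. Hence $P\cong \V_A\op$. By Theorem~\ref{thm:A}\,c), $\V_A$ is empty or has a minimum, so $P$ is empty or has a maximum. The only point to check is that $\Ga_A$ determines the poset. This is standard, because the quiver of $kP$ is the Hasse quiver of $P$.

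\emph{Sufficiency.} This direction is the real content. The empty poset should be realized by a Gorenstein tiled order with $\ul{\CM \!}^{\Z} A=0$, for instance $A=R$, which is regular. So suppose $P$ has a maximum $m$, and put $Q=P\op$, which has minimum $m$. We need a quasi-metric $d$ on an index set $\mathbb{I}$, together with a Nakayama permutation $\nu$ satisfying $d(\nu i,j)+d(j,i)=d(\nu i,i)$, such that $d\ge 0$ (the $\N$-grading) and $\V_A\cong Q$.

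I would try a construction directly from the poset. Take $\mathbb{I}$ to consist of two copies of $Q$, or $Q$ with an extra point, and define $d$ via an order ideal or distance. A natural candidate is
\[
d(i,j)=\begin{cases}0 & \text{if } i\ge j,\\ 1 & \text{otherwise,}\end{cases}
\]
possibly modified on a doubled index set so that an involutive $\nu$ (swapping the copies) satisfies the Gorenstein identity with $d(\nu i,i)$ constant, say $2$. The Gorenstein condition becomes the combinatorial requirement that, for each $i$ and $j$, exactly one of the two steps $\nu i\to j$ and $j\to i$ costs $1$. Arranging this is a complementation property: $j\mapsto$ ``$j$ lies above $i$'' versus ``$j$ lies below $\nu i$''.

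With such a construction in hand, we compute $\V_A$ from its explicit definition in the paper, presumably built from degree-$1$ data or from the non-projective simple-like objects. We then check that it is isomorphic to $Q$, with the minimum of $Q$ corresponding to $m$.

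\emph{Main obstacle.} The hardest step is finding a quasi-metric that is simultaneously Gorenstein and has $\V_A$ exactly $Q$. A useful test case is a chain, where the answer should reproduce known Gorenstein orders with $\Ga_A$ of type $A_n$. If the direct construction fails, I would fall back on an inductive approach. Start from $Q=\{m\}$, realized by a small order with $\Ga_A=k$. Then enlarge: adding a new element above a down-closed set should correspond to adding an index, or a pair of indices, to $\mathbb{I}$ with $d$ determined by that set. At each step we re-verify the triangle inequality, the Gorenstein identity, and the effect on $\V_A$. The existence of the minimum is used to anchor $\nu$ and to keep all values of $d$ nonnegative.
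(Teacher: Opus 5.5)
Your necessity argument is essentially the paper's: Theorem~\ref{thm:A}\,b), c), together with the fact that an incidence algebra determines its poset. Taking $A=R$ for the empty poset is also what the paper does. The genuine gap is the sufficiency direction for $P$ with a maximum, which you yourself call the real content. You never write down an exponent matrix, so nothing has been verified: not the triangle inequality, not $d\geq 0$, not basicness, not the Gorenstein identity, and not the computation of $\V_A$. Phrases like ``possibly modified'' and ``presumably'', plus an unspecified inductive fallback, are a plan rather than a proof. Moreover, the one concrete guess you float cannot work. Suppose $\mathbb{I}$ is two copies of all of $Q$ and every $b$-invariant equals $2$. Then every row $\nu i$ contains a $2$ and so contributes a nonzero element ${\rm v}(e_{\nu i}A(1)_{\geq 0})$ of $\V_A$. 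For basic $A$ these elements are pairwise distinct: equality of two of them forces $d(\nu i,j)=d(\nu j,i)=2$, contradicting Lemma~\ref{lem:basic}\,b). Together with the zero vector this gives $|\V_A|=2|Q|+1\neq|Q|$. Separately, your ``exactly one of the two steps costs $1$'' reformulation is the Gorenstein identity for $b$-invariant $1$, not $2$, so the sketch is not internally consistent.

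The missing idea is to split the $b$-invariants into $1$'s and $2$'s and to double only $T=P\setminus\{0\}$, where $0$ is the maximum. Set $d(i,j)=0$ if $i\leq j$ in $T$ and $d(i,j)=1$ otherwise. The paper takes the exponent matrix
\[
{\rm v}(A)=\begin{bmatrix} d & \mathbf{1}-d^t\\ \mathbf{2}-d^t & d\end{bmatrix}
\]
on $T\sqcup T$. Transitivity of $\leq$ gives the triangle inequality, and antisymmetry gives basicness. The Nakayama permutation $\nu$ swaps the two copies, and the Gorenstein identity holds with $b$-invariants $1$ and $2$. Rows in the upper block row have all entries $\leq 1$, so they contribute only the zero vector ${\rm v}([R\cdots R])$. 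This is the minimum of $\V_A$, and it becomes the maximum of $P$. Each row of the lower block row contributes exactly one element ${\rm v}(e_{\nu i}A(1)_{\geq 0})$: the indicator vector of the principal down-set of $i$ in $T$, padded by zeros. Since all $b$-invariants are positive, Theorem~8.2 of \cite{IyamaKimuraUeyama24} applies. Do check the orientation when you write this up. These indicator vectors are ordered like $T$ itself, so by Theorem~\ref{thm:A}\,b) this $d$ gives $\Ga_A\cong k(\{\hat 1\}\sqcup T\op)$. To get exactly $kP$, use $d^t$ in place of $d$, that is, run the construction for $T\op$. This is harmless for the existence statement, because $P\mapsto\{\hat 1\}\sqcup(P\setminus\{0\})\op$ is a bijection on finite posets with maximum.
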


A finite poset $\mathbb{I}$ gives rise to a quasi-metric $d$ on $\mathbb{I}$ given by $d(i, j)=0$ if $i \leq j$, otherwise $d(i, j)=1$. Our construction of $A$ in Theorem~\ref{thm:B} is given by the $\N$-graded Gorenstein tiled order determined by the quasi-metric space
\[
(\mathbb{I}\sqcup \mathbb{I}, \begin{bmatrix}
d & \mathbf{1}-d^t \\
\mathbf{2}-d^t & d
\end{bmatrix})\: ,
\]
where we denote $P\setminus \{0\}$ by $\mathbb{I}$ and the matrix with all entries $1$ by $\mathbf{1}$.

Since the object $V$ is basic, Proposition~\ref{prop:D} below shows that the poset $\V_A$ does not change under taking coverings of $A$. To study part~b) of Question~\ref{que:poset}, it suffices to classify $A$ up to coverings. Proposition~\ref{prop:conjugacy} shows that the poset $\V_A$ changes a little under taking conjugacies of $A$. To study part~c) of Question~\ref{que:poset}, it suffices to classify $A$ up to conjugacies. We give the answer to both parts~b) and c) of Question~\ref{que:poset} for suitable finite posets $P$. Notice that the answer to part~c) can be deduced from that to part~b).

\begin{thmx}[see Theorems~\ref{thm:classification 1} and \ref{thm:classification 2} for details] \label{thm:C}
The basic $\N$-graded Gorenstein tiled order $A$ such that the algebra $\Ga_A$ vanishes or is isomorphic to $kA_1$, $kA_2$, $kA_3$, respectively, admits a complete classification.
\end{thmx}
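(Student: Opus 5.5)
The statement is a classification for the four cases $\Ga_A=0$, $kA_1$, $kA_2$, $kA_3$. I would prove it by combining Theorem~\ref{thm:A} with the reduction results announced above. First I would use Proposition~\ref{prop:D}: the poset $\V_A$ is unchanged under coverings of $A$. Hence it suffices to classify basic $\N$-graded Gorenstein tiled orders $A$ up to coverings, that is, to classify the corresponding quasi-metric spaces $(\mathbb{I},d)$ with Nakayama permutation $\nu$ up to covering and relabeling. By Theorem~\ref{thm:A}\,c), $\V_A$ is either empty or has a minimum. So the relevant posets are $\emptyset$, $A_1$, and the linear orders $A_2$ and $A_3$ (the chains with $2$ and $3$ elements). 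Note that a poset of size $2$ or $3$ with a minimum and whose incidence algebra is isomorphic to $kA_n$ must be a chain. I would also record, via Proposition~\ref{prop:conjugacy}, how $\V_A$ changes under conjugacy, so that part~c) of Question~\ref{que:poset} follows from part~b).

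The core step is to make $\V_A$ explicit in terms of $d$ and $\nu$. I would take the construction of $V$ from the definition preceding Theorem~\ref{thm:tilting object}. There, $V$ should be a sum of graded CM modules indexed by pairs $(i,\ell)$ with $0\le \ell < d(\nu i,i)$ or a similar range, modulo identification of isomorphic summands. From this I would derive the formula
\[
|\V_A| = \sum_{i\in\mathbb{I}/\sim}\bigl(\text{number of degrees strictly between } 0 \text{ and } d(\nu i,i)\bigr),
\]
or its analogue. The condition $|\V_A|\le 3$ then bounds the values $d(\nu i,i)$, which are the Gorenstein parameters. For example:
\begin{itemize}
\item $\V_A=\emptyset$ should force $d(\nu i,i)\le 1$ for all $i$. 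Combined with the Gorenstein identity $d(\nu i,j)+d(j,i)=d(\nu i,i)$, this makes $A$ hereditary. Up to covering, $A$ is then the standard order $[R\,x^{[i>j]}]$ of $A_n$ type with cyclic $\nu$.
\item $|\V_A|=1$ allows exactly one orbit with $d(\nu i,i)=2$, or one configuration of that kind.
\end{itemize}

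With these bounds, the remaining task is combinatorial. The Gorenstein identity says that every $j$ lies on a "geodesic" from $\nu i$ to $i$. So each row of $d$ is determined by the column through $\nu$, and the possible $d$ are matrices with entries in $\{0,1,2,3\}$ satisfying the triangle inequality and this geodesic condition. I would normalize using conjugacy and covering: conjugating by a diagonal matrix of powers of $x$ shifts $d(i,j)$ by $f(i)-f(j)$, and the $\N$-grading forces $d\ge 0$. In each case I would enumerate the minimal representatives, in effect classifying $\nu$-orbits and the values along them. I would then verify directly, by computing $\V_A$, that each candidate yields the poset $A_1$, $A_2$ or $A_3$ and not a different poset of the same size. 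For example, size $3$ could a priori give the poset with a minimum and two incomparable elements; since $kA_3$ is the chain, such orders must be excluded.

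The main obstacle is this last enumeration for $kA_2$ and $kA_3$. Several non-conjugate configurations of $d$ give posets of size $2$ or $3$, and distinguishing the chain from the "V"-shaped poset requires computing morphisms in $\ul{\CM \!}^{\Z} A$ between summands of $V$. To keep the work finite, I would first prove a lemma: if two indices $i,j$ satisfy $d(i,j)=d(j,i)=0$, they can be merged up to covering or Morita equivalence. This reduces $(\mathbb{I},d)$ to a genuine metric-like configuration with few points. After that reduction, the case analysis is routine checking of triangle inequalities.
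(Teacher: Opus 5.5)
Your plan has a genuine gap in the final enumeration. Bounding the $b$-invariants $d(\nu i,i)$ does not bound $|\mathbb{I}|$. Every class in Theorems~\ref{thm:classification 1} and \ref{thm:classification 2} is an infinite family of arbitrary size, so no finite check of triangle inequalities can produce it.

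Neither of your reductions fixes this:
\begin{itemize}
\item Your merging lemma is vacuous, since basicness already gives $d(i,j)+d(j,i)>0$ for $i\neq j$.
\item Proposition~\ref{prop:D} only gives invariance of $\V_A$ under coverings. It offers no way to exhibit a given $A$ as a covering of a small tiled order. For instance, up to a permutation the third family of Theorem~\ref{thm:classification 1} is $\{k[x^2,x^3]^{[n]}\}_{n\ge 2}$, i.e.\ coverings of a numerical semigroup algebra that is not a tiled order (cf.\ Example~\ref{ex:covering of numerical semigroup algebra}). The paper does not use coverings in these proofs at all.
\end{itemize}

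The idea you are missing is control of the Nakayama permutation. Split $\mathbb{I}$ into $\nu$-orbits so that the diagonal blocks of ${\rm v}(A)$ are cyclic Gorenstein orders (Lemma~\ref{lem:cyclic Gorenstein tiled order}). Then use Lemma~\ref{lem:gluing}: the average of the $b$-invariants is the same on every orbit. This can be checked by summing $d(\nu i,j)+d(j,i)=d(\nu i,i)$ over $i$ in one orbit and $j$ in another. Consequences:
\begin{itemize}
\item If all $b$-invariants are $1$ except a single $0$ (the case $\Ga=0$) or a single $2$ (the case $kA_2$), then $A$ must be cyclic.
\item With two $2$'s (type $A_3$), either $A$ is cyclic or there are exactly two orbits of equal size, each containing one $2$.
\item If all $b$-invariants are $1$, each orbit has size $2$ by Lemma~\ref{lem:cyclic Gorenstein tiled orders}. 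The off-diagonal blocks are then constant $\mathbf{0}$ or $\mathbf{1}$, and an induction on the number of blocks finishes.
\end{itemize}
In the cyclic case one sets $\nu i=i+1$ and propagates $d(i+1,j)+d(j,i)=d(i+1,i)$ around the cycle. This pins down ${\rm v}(A)$ for every $n$; for type II it includes the argument that $n=2i_0$.

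Your bounds are also wrong as stated. The summand $\begin{bsmallmatrix}R&\cdots&R\end{bsmallmatrix}$ of $V$, i.e.\ $0\in\V_A$, is present exactly when all $b$-invariants are positive. In that case $|\V_A|=1+\sum_i(d(\nu i,i)-1)$, using Lemma~\ref{lem:basic}\,b) to see that distinct summands give distinct vectors. Two of your examples are therefore off:
\begin{itemize}
\item $d(\nu i,i)\le 1$ for all $i$ does not give $\V_A=\emptyset$; the order $\begin{bsmallmatrix}0&1\\1&0\end{bsmallmatrix}$ has $\Ga\cong k$. One needs a zero $b$-invariant, unique by Lemma~\ref{lem:basic}\,a), with all others equal to $1$.
\item $|\V_A|=1$ with positive $b$-invariants forces all of them to equal $1$, not one of them to equal $2$.
\end{itemize}
You also need the reduction to positive $b$-invariants by conjugation (Proposition~\ref{prop:conjugacy}, Lemma~\ref{lem:maximum and minimum}). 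It leaves $\V_A$ unchanged here only because these posets are chains.

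Finally, $kA_3$ in Theorem~\ref{thm:C} includes the non-linear orientation with a unique sink (V-shaped $\V_A$, types I and II), which you propose to discard. In the paper this case is the pivot of the linear one. Lemma~\ref{lem:reduction} conjugates every order with $\ul{\CM \!}^{\Z} A\simeq\per kA_3$ to one with all entries at most $2$ and exactly two entries equal to $2$, and that order's poset is V-shaped. Types III--VI are then obtained as the $\N$-graded conjugates of types I and II. Telling a chain from a V also needs no morphism computations: by Theorem~\ref{thm:A}\,b), the order on $\V_A\subseteq\Z^n$ is componentwise.
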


In Proposition~\ref{prop:global dimension 2}, we prove that $\Ga_A$ has global dimension at most $2$ for a special class of cyclic Gorenstein orders. Lemma~\ref{lem:rank 3} shows that Gorenstein tiled orders given by quasi-metric space of cardinality $3$ must be of this form. In this case, we classify the Gorenstein tiled orders such that $\Ga_A$ are hereditary, \cf Proposition~\ref{prop:hereditary of size 3}.

To prove our results, we give some preliminary results on graded Morita equivalences and covering theory on $\Z$-graded algebras, including Artin--Schelter Gorenstein algebras. In Section~\ref{ss:graded Morita equivalences}, we recall graded Morita equivalences. In Section~\ref{ss:coverings of graded algebras}, we study coverings of \linebreak $\Z$-graded rings. They establish a connection between numerical semigroup algebras and tiled orders, \cf Example~\ref{ex:covering of numerical semigroup algebra}. We give an explicit formula for the exponent matrices associated with coverings of a tiled order, \cf Example~\ref{ex:covering of tiled order}. In Proposition~\ref{prop:covering of Gorenstein order}, we prove that a covering of an Artin--Schelter Gorenstein algebra is also an Artin--Schelter Gorenstein algebra and give explicit formulas of its Nakayama permutation and Gorenstein parameters. For a $\Z$-graded ring $A$, it is well-known \cite{Gabriel81, DaoIyamaTakahashiWemyss20} that we have an equivalence
\[
F\colon \Mod \! ^{\Z} A \xlongrightarrow{_\sim} \Mod \! ^{\Z}A^{[m]}\: .
\]
If $A$ is an Iwanaga--Gorenstein $\Z$-graded algebra, then the category $\CM\!^{\Z} A$ has a Frobenius subcategory $\CM\!^{\Z}_0 A$ which enjoys Auslander--Reiten--Serre duality \cite{Auslander78}. The equivalence $F$ restricts to $\CM \!^{\Z}_0 A \iso \CM \!^{\Z}_0 A^{[m]}$ and induces a stable equivalence
\[
\ul{\CM \!}^{\Z}_0 A \xlongrightarrow{_\sim} \ul{\CM \!}^{\Z}_0 A^{[m]}\: .
\]
More explicitly, we prove that this stable equivalence preserves the standard silting objects up to direct summands.

\begin{propx}[=Proposition~\ref{prop:standard silting}] \label{prop:D}
Let $A$ be a ring-indecomposable $\N$-graded Artin--Schelter Gorenstein algebra of dimension $1$ such that the global dimension of $A_0$ is finite. Denote the standard silting object in $\ul{\CM \!}^{\Z}_0 A$ (respectively, $\ul{\CM \!}^{\Z}_0 A^{[m]}$) by $V_A$ (respectively, $V_{A^{[m]}}$).
\begin{itemize}
\item[a)] The additive subcategories of $\ul{\CM \!}^{\Z}_0 A^{[m]}$ generated by $FV_A$ and $V_{A^{[m]}}$ coincide.
\item[b)] The endomophism algebras of the standard silting objects in $\ul{\CM \!}^{\Z}_0 A$ and $\ul{\CM \!}^{\Z}_0 A^{[m]}$ are Morita equivalent.
\end{itemize}
\end{propx}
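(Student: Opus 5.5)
Throughout, I take $A^{[m]}$ to be the $m$-th covering: the $\Z$-graded algebra with $A^{[m]} = \bigl[A_{(i-j)m+\ast}\bigr]_{0\le i,j<m}$, or the equivalent description as the endomorphism algebra of $\bigoplus_{i=0}^{m-1} A(i)$ with grading coarsened by $m$. I take $F$ to be the standard equivalence $\Mod^{\Z}A \iso \Mod^{\Z}A^{[m]}$. I recall the standard silting object from \cite{IyamaKimuraUeyama24}: for an $\N$-graded AS-Gorenstein algebra $A$ of dimension $1$ with $\mathrm{gl.dim}\,A_0<\infty$, it is
\[
V_A=\bigoplus_{i=0}^{a-1} A(i)_{\geq 0},
\]
viewed in $\ul{\CM \!}^{\Z}_0 A$. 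Here $a$ is the Gorenstein parameter. Alternatively one can use the description via truncations $(A(i))_{\ge 0}$ for $0\le i<a$, or via the $\mathrm{Hom}$-finite part of $A(i)$ modulo projectives. The plan is to compute $F$ on each summand $A(i)_{\geq 0}$ directly and match the result with the summands of $V_{A^{[m]}}$, using the explicit Gorenstein parameters and Nakayama permutation of $A^{[m]}$ given in Proposition~\ref{prop:covering of Gorenstein order}.

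First I would make $F$ explicit on degree shifts and truncations. For a graded $A$-module $M$, $FM$ is the $A^{[m]}$-module $\bigoplus_{r=0}^{m-1} M_{m\ast + r}$. From this one reads off two facts.
\begin{itemize}
\item $F(M(mj))=(FM)(j)$.
\item $F(A(i))$ for $0\le i<m$ is the indecomposable projective $P_i=e_iA^{[m]}$, possibly shifted. Writing $i=mq+r$ gives $F(A(i))=P_r(q)$.
\end{itemize}
Truncation commutes with $F$ in the sense that $F(M_{\ge 0})=(FM)_{\geq 0}$, because the degree-$n$ part of $FM$ collects the degrees $mn+r\ge 0$ exactly when $n\ge 0$. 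Consequently
\[
F\bigl(A(i)_{\ge 0}\bigr)=P_r(q)_{\geq 0}.
\]

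Next, using Proposition~\ref{prop:covering of Gorenstein order}, I would identify the Gorenstein parameters of $A^{[m]}$ at each vertex $r$. The standard silting object of $A^{[m]}$ should then be $\bigoplus_r\bigoplus_{0\le q<a_r} P_r(q)_{\ge 0}$, where $a_r$ is the parameter at vertex $r$ (the non-ring-indecomposable or non-uniform-parameter version of the definition). The claim to verify is that the set $\{(q,r) : 0\le mq+r<a\}$ coincides with the index set $\{(q,r) : 0\le q< a_r\}$. This needs $a_r=\lceil (a-r)/m\rceil$, which is exactly what the covering formula should give. I would also allow for the fact that summands with $P_r(q)_{\ge0}$ projective vanish in the stable category. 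This is why only equality of additive closures, not isomorphism, is claimed in part~a).

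Part~b) then follows formally. Two basic-or-not objects with the same additive closure have Morita equivalent endomorphism algebras, and $F$ induces an isomorphism $\ul{\End}(V_A)\cong\ul{\End}(FV_A)$ since it is a stable equivalence.

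The main obstacle is the bookkeeping in the middle step. The definition of $V$ for $A^{[m]}$ must be the version allowing different Gorenstein parameters at different idempotents, and the covering formula has to be matched precisely, including the effect of the Nakayama permutation. My first attempt would be to sidestep the parameters entirely. I would characterize $\mathrm{add}\,V_A$ intrinsically as the additive closure of $\{M_{\ge 0} : M \text{ graded projective},\ M_{\ge0}\in\CM^{\Z}_0\}$ modulo projectives. That is, I would use the description $V_A=\bigoplus_{i\in\Z}A(i)_{\ge0}$, whose nonprojective summands are exactly those with $0\le i<a$. This class is visibly preserved by $F$, because $F$ preserves graded projectives and commutes with truncation.
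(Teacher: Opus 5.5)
The gap is in your matching step: the two index sets do not coincide, and the paper closes the discrepancy by an argument your proposal does not contain. The parts before that are fine. The identities $F(M(mj))=(FM)(j)$ and $F(M_{\geq 0})=(FM)_{\geq 0}$ are correct and give the paper's explicit formula for $FV_A$, and part b) does follow formally from a).

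\textbf{Why the index sets do not match.} The standard silting object in $\ul{\CM \!}^{\Z}_0 A$ is $V_A=\bigoplus_{i\in\mathbb{I}}\bigoplus_{s=1}^{-p_i+q_A}e_{\nu i}A(s)_{\geq 0}$, and your formula drops $q_A$. Redoing your bookkeeping with Proposition~\ref{prop:covering of Gorenstein order} and this definition gives
\[
F^{-1}V_{A^{[m]}}\simeq\bigoplus_{i\in\mathbb{I}}\bigoplus_{s=1}^{-p_i+mq_{A^{[m]}}}e_{\nu i}A(s)_{\geq 0}.
\]
Only $mq_{A^{[m]}}\geq q_A$ holds (in fact $q_{A^{[m]}}=\lceil q_A/m\rceil$). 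So in general $FV_A$ is a proper direct summand of $V_{A^{[m]}}$, and the extra summands are not projective.

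\textbf{Example.} Take ${\rm v}(A)=\begin{bsmallmatrix}0&1\\1&0\end{bsmallmatrix}$ and $m=2$.
\begin{itemize}
\item Here $p_i=0$ and $q_A=q_{A^{[2]}}=1$.
\item $V_A=[R\ R]^{\oplus 2}$ and $FV_A=[R\ R\ R\ R]^{\oplus 2}$, while $V_{A^{[2]}}=[R\ R\ R\ R]^{\oplus 4}$.
\item The two extra summands are $F(e_{\nu i}A(2)_{\geq 0})=F([R\ R])$. Here $[R\ R]$ is the tilting object of $\ul{\CM \!}^{\Z}A\simeq\per k$, so it is nonzero.
\end{itemize}
So the identity $a_r=\lceil (a-r)/m\rceil$ you expect fails. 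Also, projective summands vanishing in the stable category is not the reason a) is stated only up to additive closure.

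\textbf{The fallback has the same problem.} It is false that $A(s)_{\geq 0}$ becomes projective once $s$ leaves the range of $V_A$: in the example, $e_jA(s)_{\geq 0}=[R\ R]$ for every $s\geq 1$. The equality $\mathrm{add}\,V_A=\mathrm{add}\{e_jA(s)_{\geq 0}\}$ is true, but proving it is essentially the content of the statement, so you cannot assume it.

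\textbf{The missing idea is maximality of silting objects.} Since $F$ is a triangle equivalence, $FV_A$ is silting in $\ul{\CM \!}^{\Z}_0 A^{[m]}$. Your index computation, with $q$ included, shows that $FV_A$ is a direct summand of $V_{A^{[m]}}$, which is presilting. A silting object $T$ such that $T\oplus X$ is presilting forces $X\in\mathrm{add}\,T$. This gives $\mathrm{add}\,FV_A=\mathrm{add}\,V_{A^{[m]}}$, which is exactly how the paper concludes.
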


In Proposition~\ref{prop:compatibility}, we show that graded Morita equivalences and coverings are compatible. We end this section by giving the following notation.

\begin{notation}
The following notation is used throughout the article: We let $k$ be a field. Algebras have units and morphisms of algebras preserve the units. Modules are unital right modules. For a $\Z$-graded ring $A$, we denote the category of $\Z$-graded $A$-modules by $\Mod \! ^{\Z} A$. We denote the shift functor of $\Z$-graded vector spaces by $(1)$ so that we have $V(1)_n =V_{n+1}$. We denote the truncation of a $\Z$-graded vector space $V$ in non-negative degrees by $V_{\geq 0}$. For a $\Z$-graded algebra $A$, we fix a complete set $\mathbb{I}$ of representatives of primitive orthogonal idempotents of $A$ modulo the equivalence relation $\sim$, where $i\sim j$ if and only if $e_i A$ and $e_j A$ are isomorphic as ungraded $A$-modules.
\end{notation}

\subsection*{Acknowledgments}
The first-named author is supported by JSPS Grant-in-Aid for Scientific Research (B) 23K22384. The second-named author is supported by Grant-in-Aid for JSPS Fellows 25KF0130.

\section{Preliminaries}

\subsection{Artin--Schelter Gorenstein algebras} \label{ss:AS-Gorenstein algebras}

Let $A=\bigoplus_{j\in \Z}A_j$ be a basic noetherian $\Z$-graded algebra $A$ such that $A_j$ is finite-dimensional for all $j\in \Z$ and $A_j$ vanishes for all $j\ll 0$. For any $i\in \mathbb{I}$, we denote the top of the graded $A$-module $e_i A$ (respectively, $A\op$-module $Ae_i$) by $S_i$ (respectively, $S_i \op$). The $\Z$-graded algebra $A$ is called {\it Artin--Schelter Gorenstein of dimension $d$} if
\begin{itemize}
\item[a)] it has injective dimension $d$ as both a left and a right module over itself,
\item[b)] there exist a bijection $\nu \colon \mathbb{I} \to \mathbb{I}$ and integers $p_i$ such that we have an isomorphism $\Ext_A^l(S_i, A)\simeq S_{\nu i}\op(p_i)$ of graded $A\op$-modules if $l=d$, and $\Ext_A^l(S_i, A)$ vanishes if $l\neq d$, $i\in \mathbb{I}$.
\end{itemize}
The bijection $\nu$ is called the {\it Nakayama permutation} and the integers $p_i$ are called the {\it Gorenstein parameters} of $A$. These are fundamental invariants of Artin--Schelter Gorenstein algebras.

Let $A$ be an Artin--Schelter Gorenstein algebra. A finitely generated graded $A$-module $M$ is {\it (maximal) Cohen--Macaulay} if $\Ext_A ^i(M, A)$ vanishes for all positive integers $i$. We write $\CM \!^{\Z} A$ for the category whose objects are $\Z$-graded $A$-modules which are Cohen--Macaulay and morphisms are $A$-homomorphisms which are homogeneous of degree $0$. Denote the graded total quotient algebra of $A$ by $Q$. The category $\CM \!_0^\Z A$ is defined to be the full subcategory of $\CM \!^{\Z} A$ whose objects are the $\Z$-graded $A$-modules $M$ such that the \linebreak $\Z$-graded $Q$-module $M\ten_A Q$ is projective. Both categories $\CM \!^{\Z} A$ and $\CM \!_0^\Z A$ are Frobenius. They coincide if and only if $A$ has at worst isolated singularities.

\subsection{Gorenstein orders} \label{ss:Gorenstein orders}

From now on, we always denote $R=k[x]$ and $K=k[x, x^{-1}]$ with $\deg x=1$.
A module-finite $\N$-graded $R$-algebra $A$ is called an {\it $R$-order} if it is projective as an $R$-module.
In this case, the graded $A$-bimodule $\omega=\Hom_R (A, R(-1))$ is called a {\it graded canonical module} of $A$.
An $R$-order $A$ is called a {\it Gorenstein $R$-order} if $\omega$ is a projective graded $A$-module. Basic Gorenstein $R$-orders are Artin--Schelter Gorenstein algebras. The Nakayama permutation and the Gorenstein parameters of a basic Gorenstein $R$-order $A$ are determined by the isomorphisms $e_i \omega \simeq e_{\nu i}A(-p_i)$ of graded $A$-modules, $i\in \mathbb{I}$. If the action of $\nu$ on $\mathbb{I}$ is transitive, then $A$ is called a {\it cyclic Gorenstein order}.

Let $A$ be a Gorenstein $R$-order. A finitely generated graded $A$-module $M$ is Cohen--Macaulay if and only if it is projective as an $R$-module. An object $M\in \CM \!^{\Z} A$ lies in $\CM \!_0^\Z A$ if and only if the $\Z$-graded $A\ten_R K$-module $M\ten_R K$ is projective.

\subsection{Gorenstein tiled orders} \label{ss:Gorenstein tiled orders}

Following Section~8 of \cite{IyamaKimuraUeyama24}, a {\it tiled order} (over $R$) is an $R$-subalgebra
\[
A=
\begin{bmatrix}
Rx^{d(1, 1)} & \cdots & Rx^{d(1, n)} \\
\vdots & \ddots & \vdots \\
Rx^{d(n, 1)} & \cdots & Rx^{d(n, n)}
\end{bmatrix}
\subseteq {\rm M}_n(K)
\]
for a positive integer $n$, where $d(i, j)$ are integers for all $1\leq i$, $j\leq n$. The algebra ${\rm M}_n(K)$ has a $\Z$-grading given by ${\rm M}_n(K)_j={\rm M}_n(k)x^j$. Clearly, the algebra $A$ inherits this \linebreak $\Z$-grading. Notice that $A$ is an $R$-subalgebra of ${\rm M}_n(K)$ if and only if we have $d(i, i)=0$ and $d(i, k)+d(k, j)\geq d(i, j)$ for all $1\leq i$, $j$, $k\leq n$, that is, the map $d$ is a quasi-metric on $\{1, \ldots, n\}$. In this case, we define the {\it exponent matrix} of $A$ as
\[
{\rm v}(A)=[d(i, j)]\in {\rm M}_n(\Z) \: .
\]
It is basic if and only if we have $d(i, j)+d(j, i)>0$ for all $i\neq j$. A basic tiled order $A$ is a Gorenstein order if and only if there exists a bijection $\nu \colon \mathbb{I} \to \mathbb{I}$ such that we have $d(\nu i, j)+d(j, i)=d(\nu i, i)$ for all $i$, $j\in \mathbb{I}$. In this case, the bijection $\nu$ is the Nakayama permutation and the integers $p_i=1-d(\nu i, i)$ are the Gorenstein parameters of $A$. For convenience, we call the entries $d(\nu i, i)=1-p_i$ the {\it $b$-invariants} of $A$, following a custom that $-p_i$ are called the $a$-invariants of $A$. Notice that if moreover $A$ is $\N$-graded, then an entry being a $b$-invariant implies that it is the greatest entry in the same row and column of ${\rm v}(A)$.

Let $A$ be an $\N$-graded Gorenstein tiled order. By replacing it with a graded Morita equivalent basic $\N$-graded Gorenstein tiled order, from part~(1) of Theorem~5.1 of \cite{IyamaKimuraUeyama24}, we deduce that the stable category $\ul{\CM \!}^\Z A$ contains a standard silting object
\[
V=V_A=\begin{bmatrix}
R & \cdots & R
\end{bmatrix}\oplus \bigoplus_{i\in \mathbb{I}}\bigoplus_{j=1}^{-p_i}e_{\nu i}A(j)_{\geq 0} \: .
\]
Notice that our $V$ is basic, but the one in \cite{IyamaKimuraUeyama24} is not basic in general. For any Cohen--Macaulay graded $A$-module
\[
M={\rm L}(v)=
\begin{bmatrix}
Rx^{v_1} & \cdots & Rx^{v_n}
\end{bmatrix}
\]
with integers $v_1$, \ldots, $v_n$, we denote ${\rm v}(M)=(v_1, \ldots, v_n)$. We define the subposet
\[
\V_A=\{{\rm v}(e_i A(j)_{\geq 0})\mid i\in \mathbb{I}, j\geq 1\}\setminus \{{\rm v}(e_i A)\mid i\in \mathbb{I}\} \subseteq \Z^n \: .
\]
If all $b$-invariants of $A$ are positive, then we have
\[
\V_A=\{{\rm v}(e_i A(j)_{\geq 0})\mid i\in \mathbb{I}, j\geq 1\}\: .
\]
In this case, the standard silting object $V$ is tilting and the endomorphism algebra \linebreak $\Ga=\Ga_A=\End_{\ul{\CM \!}^\Z A}(V)$ is isomorphic to the incidence algebra $k\V_A \op$, \cf Theorem~8.2 of \cite{IyamaKimuraUeyama24}. If one of the $b$-invariants of $A$ is zero, then we cannot apply this theorem. But in Theorem~\ref{thm:tilting object} below, we show it is still true. The following example serves to illustrate the construction of $\V_A \op$ in this case.

\begin{example} \label{ex:poset}
Let $m$ and $n$ be positive integers. Let $A$ be the tiled order determined by the exponent matrix
\[
\begin{bsmallmatrix}
0 & 0 & \cdots & 0 & 0 \\
m & 0 & \cdots & 0 & 0 \\[-5pt]
\vdots & \vdots & \ddots & \vdots & \vdots \\
m & m & \cdots & 0 & 0 \\
m & m & \cdots & m & 0
\end{bsmallmatrix}
\]
of size $n$. Then the Hasse quiver of $\V_A \op$ is given as follows.
\[
\begin{tikzcd}[ampersand replacement=\&]
\begin{psmallmatrix}
m-1 & 0 & \cdots & 0 & 0
\end{psmallmatrix} \arrow{r}
\& \cdots \arrow{r} \&
\begin{psmallmatrix}
2 & 0 & \cdots & 0 & 0
\end{psmallmatrix} \arrow{r}
\&
\begin{psmallmatrix}
1 & 0 & \cdots & 0 & 0
\end{psmallmatrix} \\
\vdots \arrow{r} \arrow{u} \& \ddots \arrow{r} \arrow{u} \& \vdots \arrow{r} \arrow{u} \& \vdots \arrow{u} \\
\begin{psmallmatrix}
m-1 & m-1 & \cdots & 0 & 0
\end{psmallmatrix} \arrow{r} \arrow{u}
\& \cdots \arrow{r} \arrow{u} \&
\begin{psmallmatrix}
2 & 2 & \cdots & 0 & 0
\end{psmallmatrix} \arrow{r} \arrow{u}
\&
\begin{psmallmatrix}
1 & 1 & \cdots & 0 & 0
\end{psmallmatrix} \arrow{u}\\
\begin{psmallmatrix}
m-1 & m-1 & \cdots & m-1 & 0
\end{psmallmatrix} \arrow{r} \arrow{u}
\& \cdots \arrow{r} \arrow{u} \&
\begin{psmallmatrix}
2 & 2 & \cdots & 2 & 0
\end{psmallmatrix} \arrow{r} \arrow{u}
\&
\begin{psmallmatrix}
1 & 1 & \cdots & 1 & 0
\end{psmallmatrix} \arrow{u}
\end{tikzcd}
\]
\end{example}

\section{On the virtues of graded algebras}

To study representations of $\Z$-graded algebras, the notions of graded Morita equivalences and coverings are fundamental. The aim of this section is to study behaviors of Nakayama permutations, Gorenstein parameters, and standard silting objects under these two operations.

\subsection{Graded Morita equivalences} \label{ss:graded Morita equivalences}

Let $A$ be a $\Z$-graded algebra and $\{e_1, \ldots, e_n\}$ a complete set of primitive orthogonal idempotents of $A_0$. For integers $l_1$, \ldots, $l_n$, we define a graded $A$-module $P=\bigoplus_{i=1}^n e_i A(l_i)$. Let $B$ be its graded endomorphism algebra. As an ungraded algebra, it is isomorphic to $A$.

\begin{proposition}[\cite{GordonGreen82}]
Let $A$ and $B$ be as above. Then there exists an equivalence $F\colon \Mod \! ^{\Z} A \iso \Mod \! ^{\Z} B$ which is compatible with the shift functors $(1)$ on both sides.
\end{proposition}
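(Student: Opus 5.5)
The plan is to realize $\Mod^{\Z} B$ as a category of graded modules over $A$ with twisted degrees, via the standard graded Morita theory argument. Since $P=\bigoplus_{i=1}^n e_iA(l_i)$ is a finitely generated graded projective generator of $\Mod^{\Z} A$ up to shifts, I would put
\[
F=\bigoplus_{m\in\Z}\Hom_{\Mod^{\Z} A}(P, -(m))\colon \Mod^{\Z} A\to \Mod^{\Z} B ,
\]
that is, $F(M)=\underline{\Hom}_A(P,M)$, the graded Hom space with the natural right action of the graded endomorphism ring $B=\underline{\End}_A(P)=\bigoplus_m \Hom(P,P(m))$ given by precomposition. Compatibility with the shift holds by construction: $F(M(1))_m=\Hom(P,M(m+1))=F(M)_{m+1}=F(M)(1)_m$, naturally in $M$.

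First I would check that $F$ is fully faithful and essentially surjective. It is exact, because each $e_iA(l_i)$ is graded projective, and it commutes with arbitrary direct sums, because $P$ is finitely generated. Next, $F(e_iA(l_i+m))\simeq \epsilon_iB(m)$, where $\epsilon_i$ is the idempotent of $B$ corresponding to the summand $e_iA(l_i)$. On these objects $F$ is fully faithful, since
\[
\Hom(e_iA(l_i+m), e_jA(l_j+m'))=\Hom(\epsilon_iB(m),\epsilon_jB(m')),
\]
both sides being the degree-$(m'-m)$ part of the relevant corner of $B$.

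Every graded $A$-module has a presentation by direct sums of modules $e_iA(s)$, and $e_iA(s)=e_iA(l_i+(s-l_i))$, so these summands are of the form above. The five lemma then gives full faithfulness on all of $\Mod^{\Z}A$. For essential surjectivity, the modules $\epsilon_iB(m)$ generate $\Mod^{\Z}B$, so every $B$-module $N$ is the cokernel of a map between sums of them. That map lifts to a map between the corresponding sums of $e_iA(l_i+m)$ by fullness, and exactness of $F$ identifies $N$ with $F$ of its cokernel.

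Equivalently, one could write $F=-\otimes_A \underline{\Hom}_A(P,A)$, using the bimodule $\underline{\Hom}_A(P,A)$, whose inverse is $-\otimes_B P$. This is just graded Morita theory: $\underline{\Hom}_A(P,A)\otimes_B P\simeq A$ because $P$ contains every $e_iA$ up to shift.

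The only delicate step is bookkeeping: checking that $F$ preserves gradings and commutes with $(1)$ strictly up to natural isomorphism, so that the equivalence is of categories with $\Z$-action. The remaining steps are routine once the generators are matched. I would also remark that the ungraded isomorphism $B\simeq A$ is given by conjugating with the diagonal degrees $x^{l_i}$, which explains why $B$ is again a tiled order, with exponent matrix $d(i,j)+l_j-l_i$ in the tiled setting.
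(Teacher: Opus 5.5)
Your proposal is correct: the paper gives no proof of this statement and only cites Gordon--Green, and your argument is exactly the standard graded Morita argument behind that reference. You take the functor $\underline{\Hom}_A(P,-)$, equivalently $-\otimes_A\underline{\Hom}_A(P,A)$ with inverse $-\otimes_B P$, show it is exact, sum-preserving, strictly compatible with $(1)$ and fully faithful on the shifts $e_iA(s)$, and then extend to all modules via presentations. Your exponent formula $d(i,j)+l_j-l_i$ matches the paper's description of $B$ as $DAD^{-1}$ with $D=\diag(x^{-l_1},\ldots,x^{-l_n})$, and the one compressed step in your five-lemma argument (bijectivity of $\Hom(e_iA(s),N)\to\Hom(F(e_iA(s)),F(N))$ for arbitrary $N$) follows at once from $F(N)_m=\bigoplus_i N_{m-l_i}e_i$, which identifies both sides with $N_{-s}e_i$.
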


The $\Z$-graded algebra $B$ is isomorphic to $(\sum_{i=1}^n e_i(l_i))A(\sum_{i=1}^n e_i(-l_i))$. Any graded Morita equivalence is a composition of those induced by the graded $A$-module $P$ with exactly one nonzero $l_i$ which equals $1$ or $-1$. When $A$ is a $\Z$-graded Gorenstein tiled order, the above construction of $B$ is given by its conjugacy $DAD^{-1}$, where $D$ is the diagonal matrix $\diag(x^{-l_1}, \ldots, x^{-l_n})$. If $A$ and $B$ are $\N$-graded, then the endomorphism algebras of the standard tilting objects in $\ul{\CM \!}^{\Z} A$ and $\ul{\CM \!}^{\Z} B$ are only derived equivalent but not Morita equivalent in general. So the posets $\V_A$ and $\V_B$ may be not isomorphic.

\begin{proposition} \label{prop:conjugacy}
Let $A$ be an $\N$-graded Gorenstein tiled order and $l_1$, \ldots, $l_n$ integers. Denote $D=\diag(x^{l_1}, \ldots, x^{l_n})$. Suppose that $DAD^{-1}$ is also $\N$-graded.
\begin{itemize}
\item[a)] If we have $l_i=1$ and $l_j=0$ for all $j\neq i$, then the poset $\V_{DAD^{-1}}$ is isomorphic to
\[
(\V_A \setminus \{{\rm v}(e_{\nu i}A(1)_{\geq 0})\})\cup \{{\rm v}(e_i A)\}\: .
\]
\item[b)] If we have $l_i=-1$ and $l_j=0$ for all $j\neq i$, then the poset $\V_{DAD^{-1}}$ is isomorphic to
\[
(\V_A \setminus \{{\rm v}(e_i A(1)_{\geq 0})\})\cup \{{\rm v}(e_{\nu i} A)\} \: .
\]
\end{itemize}
\end{proposition}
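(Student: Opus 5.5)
Since $\V_A$ is defined inside $\Z^n$ through the vectors ${\rm v}(-)$ of lattices ${\rm L}(v)=[Rx^{v_1}\ \cdots\ Rx^{v_n}]$, I would transport everything along the explicit graded Morita equivalence and compare vectors directly. With $D=\diag(x^{l_1},\ldots,x^{l_n})$, the algebra $B=DAD^{-1}$ has exponent matrix $d_B(a,b)=d(a,b)+l_a-l_b$, and the functor $M\mapsto MD^{-1}$ sends ${\rm L}(v)$ over $A$ to ${\rm L}(v-l)$ over $B$, where $l=(l_1,\ldots,l_n)$. This is an order isomorphism $\Z^n\to\Z^n$ given by translation by $-l$. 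It also sends $e_aA(j)$ to $e_aB(j-l_a)$, because ${\rm v}(e_aA(j))=(d(a,b)-j)_b$ and ${\rm v}(e_aB(j'))=(d(a,b)+l_a-l_b-j')_b$. Since $B$ has the same Nakayama permutation, with $b$-invariants $d_B(\nu a,a)=d(\nu a,a)+l_{\nu a}-l_a$, the plan is to compute the index set defining $\V_B$, namely the vectors ${\rm v}(e_aB(j)_{\geq 0})$ for $j\geq 1$ minus those of the $e_aB$, translate it by $+l$, and compare with $\V_A$.

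\textbf{Step 1 (rows $a\neq i$).} Take case a), $l=\epsilon_i$. For $a\neq i$, row $a$ of $d_B$ equals row $a$ of $d$ except in column $i$, where it is decreased by $1$. I would show that for $j\geq1$ the truncations agree after translation:
\[
{\rm v}(e_aB(j)_{\geq0})+\epsilon_i={\rm v}(e_aA(j)_{\geq0}).
\]
Componentwise, the truncation takes $\max(\cdot,0)$, so this holds wherever entries are positive. The only delicate coordinate is column $i$, where $\max(d(a,i)-1-j,0)+1$ must be compared with $\max(d(a,i)-j,0)$. These can differ only when $d(a,i)\le j$. In that case I expect the relevant vectors to coincide with ones already excluded or to come from row $i$, and I would check this using the Gorenstein identity $d(\nu a,b)+d(b,a)=d(\nu a,a)$.

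\textbf{Step 2 (row $i$ and row $\nu i$).} Row $i$ of $d_B$ is row $i$ of $d$ increased by $1$ off the diagonal. So $e_iB(j)_{\geq 0}$ corresponds to $e_iA(j-1)$ truncated, up to the diagonal entry. Hence the family for $B$ gains ${\rm v}(e_iA(0)_{\geq0})={\rm v}(e_iA)$, which is no longer excluded after the shift. Dually, the $b$-invariant of the pair $(\nu i,i)$ drops by $1$, and I would use this to show that the vector ${\rm v}(e_{\nu i}A(1)_{\geq0})$ disappears. The point is that the socle-type relation $d(\nu i,b)+d(b,i)=d(\nu i,i)$ forces ${\rm v}(e_{\nu i}A(1)_{\geq0})$ to become the vector of a projective $e_{\nu i}B$ (or to coincide with another listed vector). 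The $\N$-gradedness of $B$ guarantees that all shifts involved stay in the range $j\geq1$.

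\textbf{Step 3 (case b)).} Case b) follows by applying a) to $B$ with the roles of $A$ and $B$ swapped: $A=D^{-1}BD$, and the $\nu$-indexing shifts accordingly. Alternatively, I would rerun the same computation with $l=-\epsilon_i$.

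I expect the main obstacle to be the bookkeeping in Steps 1 and 2. Truncation $(\cdot)_{\geq0}$ does not commute with the translation in the $i$-th coordinate, so one must verify that precisely one element is lost and one is gained, and that no accidental coincidences are created or destroyed. I would handle this by splitting according to the sign of $d(a,i)-j$ and invoking the Gorenstein equalities to identify the boundary vectors.
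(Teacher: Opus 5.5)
There is a genuine gap, and more careful bookkeeping will not close it. The statement is false at this level of generality, so the coincidences you postpone in Steps~1--2 do not exist. The paper's own proof has the same hole: it only asserts that the conjugation ``does not change the order between elements of $\V_A$'', and that is exactly what fails.

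\textbf{Step~1 is wrong as stated.} The functor $M\mapsto MD^{-1}$ commutes with translation by $l$, but not with $(\cdot)_{\geq 0}$. For every $a\neq i$ and every $j\geq d(a,i)$, the left side of your identity has $i$-th coordinate $1$ while the right side has $0$; in particular $0\in\V_{DAD^{-1}}$ is sent to $\epsilon_i$. Where vectors do match, they match under the identity of $\Z^n$, not the translation; for example ${\rm v}(e_iB(j)_{\geq 0})={\rm v}(e_iA(j-1)_{\geq 0})$ on the nose.

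\textbf{The identity does not rescue it.} For $a\neq i$ and $1\le j<d(a,i)$, the $i$-th coordinate of ${\rm v}(e_aB(j)_{\geq 0})$ is one less than that of ${\rm v}(e_aA(j)_{\geq 0})$, and this can destroy order relations. Concretely, take
\[
{\rm v}(A)=\begin{bmatrix}0&0&1\\3&0&1\\2&2&0\end{bmatrix},\qquad i=1,\qquad {\rm v}(DAD^{-1})=\begin{bmatrix}0&1&2\\2&0&1\\1&2&0\end{bmatrix}.
\]
Both are basic $\N$-graded Gorenstein tiled orders with Nakayama permutation $1\mapsto 2\mapsto 3\mapsto 1$ and all $b$-invariants positive. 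One computes
\[
\V_A=\{0,(1,0,0),(2,0,0),(1,1,0)\},\qquad \V_{DAD^{-1}}=\{0,(1,0,0),(0,1,0),(0,0,1)\}.
\]
Both have Hasse diagram of type $D_4$, as they must, since $\Ga_A$ and $\Ga_{DAD^{-1}}$ are derived equivalent. The predicted poset is
\[
(\V_A\setminus\{(2,0,0)\})\cup\{(0,0,1)\}=\{0,(1,0,0),(1,1,0),(0,0,1)\}.
\]
Its Hasse diagram is the path $0\lessdot(1,0,0)\lessdot(1,1,0)$, $0\lessdot(0,0,1)$, of type $A_4$, with two maximal elements instead of three.

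The breakdown is exactly the one above: ${\rm v}(e_3A(1)_{\geq 0})=(1,1,0)$ lies above $(1,0,0)$, but it becomes ${\rm v}(e_3B(1)_{\geq 0})=(0,1,0)$, which is incomparable with $(1,0,0)$. Your Step~2 mechanism also fails here: ${\rm v}(e_{\nu i}B)=(2,0,1)\neq(2,0,0)={\rm v}(e_{\nu i}A(1)_{\geq 0})$. Reading the example backwards ($l_1=-1$) refutes b) as well, since b) predicts $\{0,(1,0,0),(0,1,0),(2,0,1)\}$, again of type $A_4$.

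\textbf{What does survive.} Part b) holds in the special case the paper actually uses in Section~5.1, where column $i$ of ${\rm v}(A)$ is zero, i.e.\ $d(\nu i,i)=0$. Comparing by the identity of $\Z^n$ in that case:
\begin{itemize}
\item ${\rm v}(e_aB(j)_{\geq 0})={\rm v}(e_aA(j)_{\geq 0})$ for all $a\neq i$ and $j\geq 1$;
\item ${\rm v}(e_iB(j)_{\geq 0})={\rm v}(e_iA(j+1)_{\geq 0})$, so row $i$ loses only its top ${\rm v}(e_iA(1)_{\geq 0})$;
\item $0={\rm v}(e_{\nu i}A)$ is no longer excluded.
\end{itemize}
A correct version of the proposition needs extra hypotheses of this kind on the entries of ${\rm v}(A)$.
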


\begin{proof}
a) Notice that the associated exponent matrix ${\rm v}(DAD^{-1})$ is obtained from ${\rm v}(A)$ by adding $1$ to the $i$-th row and substract $1$ from the $i$-th column. This operation does not change the order between elements of the poset $\V_A$ but removes the maximum ${\rm v}(e_{\nu i}A(1)_{\geq 0})$ associated with the $i$-th column and adds a new maximum ${\rm v}(e_i A)$ associated with the $i$-th row. Then the statement follows.

b) Similar to part~a).
\end{proof}

\subsection{Coverings of graded algebras} \label{ss:coverings of graded algebras}

For a $\Z$-graded ring $A$ and a positive integer $m$, the {\it $m$-th covering} $A^{[m]}$ is defined \cite{Gabriel81, DaoIyamaTakahashiWemyss20} as the $\Z$-graded subring
\[
\begin{bmatrix}
A_{m\Z} & A_{m\Z+1} & \cdots & A_{m\Z+m-1} \\
A_{m\Z-1} & A_{m\Z} & \cdots & A_{m\Z+m-2} \\
\vdots & \vdots & \ddots & \vdots \\
A_{m\Z-m+1} & A_{m\Z-m+2} & \cdots & A_{m\Z}
\end{bmatrix}
\subseteq {\rm M}_m(A) \: .
\]
For example, coverings establish a connection between semigroup algebras and tiled orders.

\begin{example} \label{ex:covering of numerical semigroup algebra}
Let $S\subset \N$ be a cofinite proper submonoid and $A=k[S]\subset k[x]$ the numerical semigroup algebra. We regard $A$ as a $\Z$-graded algebra whose grading is determined by $\deg x=1$. Denote its $a$-invariant by $a$. Then for an integer $m\geq a+1$, the $m$-th covering $A^{[m]}$ is a tiled order.
\end{example}

Coverings of a tiled order are described as follows.

\begin{example} \label{ex:covering of tiled order}
Let $A$ be a tiled order. Then the covering $A^{[m]}$ is isomorphic to the tiled order determined by the exponent matrix
\[
\begin{bmatrix}
\lceil \frac{{\rm v}(A)}{m}\rceil & \lceil \frac{{\rm v}(A)-1}{m}\rceil & \cdots & \lceil \frac{{\rm v}(A)-m+1}{m}\rceil \\
\lceil \frac{{\rm v}(A)+1}{m}\rceil & \lceil \frac{{\rm v}(A)}{m}\rceil & \cdots & \lceil \frac{{\rm v}(A)-m+2}{m}\rceil \\
\vdots & \vdots & \ddots & \vdots \\
\lceil \frac{{\rm v}(A)+m-1}{m}\rceil & \lceil \frac{{\rm v}(A)+m-2}{m}\rceil & \cdots & \lceil \frac{{\rm v}(A)}{m}\rceil
\end{bmatrix} \: .
\]
\end{example}

\begin{proof}
Put $P=
\begin{bsmallmatrix}
I & 0 & \cdots & 0 \\
0 & xI & \cdots & 0 \\[-5pt]
\vdots & \vdots & \ddots & \vdots \\
0 & 0 & \cdots & x^{m-1}I
\end{bsmallmatrix}$, where the size of the identity matrix $I$ is the same as that of ${\rm v}(A)$. After replacing $A^{[m]}$ by $PA^{[m]}P^{-1}$ and $x^m$ by $x$ we obtain the desired tiled order.
\end{proof}

We have the equivalence
\begin{equation} \label{eq:covering}
F\colon \Mod \! ^{\Z} A \iso \Mod \! ^{\Z}A^{[m]}
\end{equation}
which maps $X$ to
$\begin{bmatrix}
X_{m\Z} & X_{m\Z +1} & \cdots & X_{m\Z +m-1}
\end{bmatrix}$.
Similarly, we have the equivalence $F'\colon \Mod \! ^{\Z} A\op \iso \Mod \! ^{\Z}(A^{[m]})\op$ which maps $X$ to
$\begin{bmatrix}
X_{m\Z} & X_{m\Z -1} & \cdots & X_{m\Z -m+1}
\end{bmatrix}^t$.
Denote the idempotent $e_i$ of the $j$-th diagonal block of $A^{[m]}$ by $e_{i, j-1}$. We have a natural bijection $\mathbb{I}_{A^{[m]}}\simeq \mathbb{I}_A \times \{0, \ldots, m-1\}$.
Put $(i, j)=(i, j-\lfloor \frac{j}{m} \rfloor m)$ for all $i\in \mathbb{I}_A$ and integers $j$. We describe the Nakayama permutation and Gorenstein parameters of coverings in the following theorem.

\begin{proposition} \label{prop:covering of Gorenstein order}
Let $A$ be an Artin--Schelter Gorenstein algebra of dimension $d$ with the Nakayama permutation $\nu$ and the Gorenstein parameters $p_i$, $i\in \mathbb{I}$. Then the covering $A^{[m]}$ is an Artin--Schelter Gorenstein algebra of dimension $d$ whose Nakayama permutation is given by $\nu^{[m]}(i, j)=(\nu i, p_i+j)$ and Gorenstein parameters are given by $p^{[m]}_{i, j}=\lfloor \frac{p_i+j}{m}\rfloor$, $(i, j)\in \mathbb{I} \times \{0, \ldots, m-1\}$.
\end{proposition}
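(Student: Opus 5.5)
The plan is to transport everything along the equivalence $F\colon \Mod \! ^{\Z} A \iso \Mod \! ^{\Z}A^{[m]}$ of \eqref{eq:covering} and its left-module analogue $F'$. First I will check that $A^{[m]}$ is noetherian with finite-dimensional graded pieces vanishing in degrees $\ll 0$. Concretely, $(A^{[m]})_n$ consists of the blocks $A_{mn+j-i}$, and $A^{[m]}=F(\bigoplus_{j=0}^{m-1}A(j))$ as a right module over itself. The equivalence $F$ matches graded projectives: $F(e_iA(j))\simeq e_{(i,j)}A^{[m]}(\lfloor j/m\rfloor)$, using the convention $(i,j)=(i,j-\lfloor j/m\rfloor m)$. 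This is checked directly from the definition $F(X)=[X_{m\Z}\ \cdots\ X_{m\Z+m-1}]$, since the $(i,j)$-th row of $A^{[m]}$ is exactly $F(e_iA(j))$ for $0\le j\le m-1$, and shifting $j$ by $m$ shifts the degree by $1$. Likewise $F$ matches tops: $F(S_i(j))\simeq S_{(i,j)}(\lfloor j/m\rfloor)$. The same statements hold for $F'$ and left modules, with the sign convention $F'(X)=[X_{m\Z}\ \cdots\ X_{m\Z-m+1}]^t$, so that $F'(S_i\op(-j))\simeq S_{(i,j)}\op(\pm\lfloor j/m\rfloor)$ after matching indices carefully.

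Since $F$ is an exact equivalence sending projectives to projectives and commuting with the shift $(1)$ (with $F(X(m))=F(X)(1)$), graded injective dimension is preserved. Hence $A^{[m]}$, being a finite sum of images of shifts of $A$, has injective dimension $d$ on the right, and by $F'$ also on the left.

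For the Ext condition, the key step is compatibility of $F$ and $F'$ with the duality $\Hom_A(-,A)$. I will establish
\[
F'\bigl(\textstyle\bigoplus_{n}\Hom_{\Mod^{\Z}A}(X,A(n))\bigr)\simeq \Hom_{A^{[m]}}(FX,A^{[m]}),
\]
naturally in $X$, and then the analogous statement for $\Ext^l$ by taking graded projective resolutions, which $F$ preserves. The cleanest route is to note that $\Hom_{A^{[m]}}(FX,A^{[m]})=\bigoplus_{j}\Hom(FX,F(A(j)))$ and to use full faithfulness of $F$ on graded Homs. Applying this to $X=S_i$ gives
\[
\Ext^l_{A^{[m]}}(S_{(i,0)},A^{[m]})=F'\bigl(\Ext^l_A(S_i,A)\bigr).
\]
This vanishes for $l\ne d$, and for $l=d$ it equals $F'(S\op_{\nu i}(p_i))$. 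For general $(i,j)$ with $0\le j\le m-1$, we use $S_{(i,j)}=F(S_i(j))$, which shifts $p_i$ to $p_i+j$. Then $F'(S\op_{\nu i}(p_i+j))\simeq S\op_{(\nu i,p_i+j)}(\lfloor (p_i+j)/m\rfloor)$, which is exactly the claimed formula for $\nu^{[m]}$ and $p^{[m]}$.

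The main obstacle is bookkeeping. I must get the correct sign conventions for $F'$ relative to $F$, so that the duality intertwines them and the shift on the left side is $+\lfloor (p_i+j)/m\rfloor$ rather than its negative. I would settle this first by computing directly for $X=e_iA(j)$, where $\Hom_A(e_iA(j),A)=Ae_i(-j)$, and comparing with $\Hom_{A^{[m]}}(e_{(i,j)}A^{[m]},A^{[m]})=A^{[m]}e_{(i,j)}$. This fixes the normalization, after which the general case follows by naturality. Bijectivity of $\nu^{[m]}$ is then automatic, since $(i,j)\mapsto(\nu i,p_i+j \bmod m)$ is a bijection as $\nu$ is.
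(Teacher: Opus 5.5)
Your strategy is the paper's. You transfer injective dimension along $F$ and $F'$, use $\Ext^l_{A^{[m]}}(FX,A^{[m]})\simeq F'\Ext^l_A(X,A)$, and read off $\nu^{[m]}$ and $p^{[m]}_{i,j}$ from how $F$ and $F'$ act on shifted simples. The paper gets the Ext isomorphism from the bimodule equivalence induced by $F$ and $F'$ sending $A$ to $A^{[m]}$; your regrouping of graded Homs is an acceptable substitute.

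The problem is that the index bookkeeping, which is the entire content of the formula, is wrong as you state it. Since $F(X)_n=[X_{mn}\ \cdots\ X_{mn+m-1}]$ and the $(a,b)$-block of $(A^{[m]})_n$ is $A_{mn+b-a}$, the $a$-th row block of $A^{[m]}$ ($0\le a\le m-1$) is $F(A(-a))$, not $F(A(a))$. Hence $A^{[m]}\simeq F(\bigoplus_{a=0}^{m-1}A(-a))$, as in the paper. This is harmless for the injective dimension, which shifts do not change. In general one has
\[
F(e_iA(j))\simeq e_{(i,-j)}A^{[m]}(\lceil j/m\rceil),\qquad F(S_i(j))\simeq S_{(i,-j)}(\lceil j/m\rceil).
\]
Your formula $e_{(i,j)}A^{[m]}(\lfloor j/m\rfloor)$ already fails for $j=1$ and $m\ge 2$, where $F(e_iA(1))\simeq e_{(i,m-1)}A^{[m]}(1)$. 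On the left the correct rule is $F'(S_i\op(j))\simeq S_{(i,j)}\op(\lfloor j/m\rfloor)$. That is what you use at the end, but it contradicts your earlier guess $F'(S_i\op(-j))\simeq S_{(i,j)}\op(\pm\lfloor j/m\rfloor)$. Also, your regrouping $\Hom_{A^{[m]}}(FX,A^{[m]})_n\simeq\bigoplus_{a=0}^{m-1}\Hom_A(X,A)_{mn-a}=F'(\Hom_A(X,A))_n$ produces $F'$ only because the row blocks are $F(A(-a))$.

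There is a second sign error in ``$S_{(i,j)}=F(S_i(j))$, which shifts $p_i$ to $p_i+j$''. Since $\Hom_A(X(j),A)\simeq\Hom_A(X,A)(-j)$, one has $\Ext^d_A(S_i(j),A)\simeq S\op_{\nu i}(p_i-j)$. Your two errors cancel, and that is the only reason you land on the stated formula. The correct chain is $S_{(i,j)}=F(S_i(-j))$ for $0\le j\le m-1$, so
\[
\Ext^d_{A^{[m]}}(S_{(i,j)},A^{[m]})\simeq F'\bigl(S\op_{\nu i}(p_i+j)\bigr)\simeq S\op_{(\nu i,p_i+j)}\bigl(\lfloor \tfrac{p_i+j}{m}\rfloor\bigr),
\]
and $\Ext^l$ vanishes for $l\neq d$. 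This is exactly the paper's conclusion from $F(S_i(j))=S_{i,-j}(\lceil j/m\rceil)$ and $F'(S_i\op(j))=S_{i,j}\op(\lfloor j/m\rfloor)$.

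The normalization check you proposed would have exposed the mismatch. Here $\Hom_A(e_iA(j),A)=Ae_i(-j)$, and $F'(Ae_i(-j))\simeq A^{[m]}e_{(i,m-j)}(-1)$ for $1\le j\le m-1$, not $A^{[m]}e_{(i,j)}$. So the approach is sound, but you must redo the identifications with the corrected formulas. Your extra check that $A^{[m]}$ is noetherian, locally finite and bounded below is a sensible point the paper leaves implicit.
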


\begin{proof}
Since $A$ is an Artin--Schelter Gorenstein algebra of dimension $d$, the injective dimension of the graded $A$-module $\bigoplus_{j=0}^{m-1}A(-j)$ is $d$. So the injective dimension of its image under the equivalence $F$ is also $d$. By the definition of $F$, this image is isomorphic to $A^{[m]}$ as a graded $A^{[m]}$-module. Therefore, the self-injective dimension of $A^{[m]}$ as a right module is $d$. Similarly, the self-injective dimension of $A^{[m]}$ as a left module is $d$.

We now prove the formulas of the Nakayama permutation $\nu^{[m]}$ and the Gorenstein parameters $p^{[m]}_{i, j}$. Since $F$ (respectively, $F'$) induces graded Morita equivalence between $A$ and $A^{[m]}$ (respectively, between $A\op$ and $(A^{[m]})\op$), the composed functor $F'\circ F$ induces an equivalence between the categories of graded bimodules over $A$ and $A^{[m]}$ which maps $A$ to $A^{[m]}$. So we have the isomorphism $\Ext_{A^{[m]}}^l(F(S_i(j)), A^{[m]})\liso F'\Ext_A^l(S_i(j), A)$ of graded $(A^{[m]})\op$-modules for all $i\in \mathbb{I}$, integers $j$, and non-negative integers $l$. If we have $l\neq d$, then $\Ext_{A^{[m]}}^l(F(S_i(j)), A^{[m]})$ vanishes. If we have $l=d$, then we have an isomorphism $\Ext_{A^{[m]}}^l(F(S_i(j)), A^{[m]})\liso F'(S_{\nu i}\op(p_i-j))$ of graded $(A^{[m]})\op$-modules. We conclude the desired formulas from the fact that we have $F(S_i(j))=S_{i, -j}(\lceil \frac{j}{m}\rceil)$ and \linebreak $F'(S_i\op(j))=S_{i, j}\op(\lfloor \frac{j}{m}\rfloor)$ for all $i\in \mathbb{I}$ and integers $j$.
\end{proof}

As a consequence, we obtain the following properties of coverings. They can be used to detect whether an Artin--Schelter Gorenstein algebra is a covering of another one, \eg~the classes in Theorem~\ref{thm:classification 2}.

\begin{proposition}
Let $A$ be an Artin--Schelter Gorenstein algebra.
\begin{itemize}
\item[a)] The number of Gorenstein parameters of $A$ which equal $1$ respectively $-1$ is invariant under taking coverings.
\item[b)] If $(i, j)$ and $(i', j')$ are in the same orbit under the Nakayama permutation of a covering of $A$, then $i$ and $i'$ are in the same orbit under the Nakayama permutation of $A$. In particular, if the action of the Nakayama permutation is transitive after taking a covering, then so is itself.
\end{itemize}
\end{proposition}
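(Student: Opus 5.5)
Both parts follow from the explicit formulas in Proposition~\ref{prop:covering of Gorenstein order}, namely $\nu^{[m]}(i,j)=(\nu i, p_i+j)$ and $p^{[m]}_{i,j}=\lfloor \frac{p_i+j}{m}\rfloor$ for $(i,j)\in \mathbb{I}\times\{0,\ldots,m-1\}$. Here the second coordinate of $(\nu i, p_i+j)$ is read modulo $m$ via the convention $(i,j)=(i,j-\lfloor \frac{j}{m}\rfloor m)$. The plan is to reduce each statement to a counting or projection argument over the fibres $\{i\}\times\{0,\ldots,m-1\}$ of the natural map $\mathbb{I}_{A^{[m]}}\to\mathbb{I}_A$.

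For part a), fix $i\in\mathbb{I}$ and count the $j\in\{0,\ldots,m-1\}$ with $\lfloor \frac{p_i+j}{m}\rfloor=1$, that is, with $m\le p_i+j\le 2m-1$. As $j$ runs over $0,\ldots,m-1$, the integer $p_i+j$ runs over the $m$ consecutive integers $p_i,\ldots,p_i+m-1$. This window meets the interval $[m,2m-1]$ in exactly one point if $p_i=1$. If $p_i\le 0$ the largest value is at most $m-1$, so there is none. If $p_i\ge 2$ there are $p_i$ such points, since the window meets $[m,2m-1]$ in $[m, p_i+m-1]$.

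Hence the fibre of $i$ contributes exactly one covering parameter equal to $1$ when $p_i=1$, none when $p_i\le 0$, and $p_i\geq 2$ of them when $p_i\ge 2$. So the count is not preserved in general, and I would have to check whether the intended statement carries an extra hypothesis. In the setting used later (for example $\N$-graded orders, where $p_i=1-d(\nu i,i)\le 1$), every $p_i\le 1$, so the number of $1$'s is exactly preserved. I would make this hypothesis explicit, or count $p_i$ with the appropriate multiplicity.

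The case $-1$ is symmetric. Here $\lfloor \frac{p_i+j}{m}\rfloor=-1$ means $-m\le p_i+j\le -1$. The window $[p_i,p_i+m-1]$ meets $[-m,-1]$ in one point exactly when $p_i=-1$. When $p_i\ge 0$ it misses, and when $p_i\le -2$ the intersection $[\max(p_i,-m),-1]$ has several points. So, as above, exact invariance holds for parameters in the range $\{-1,0,1\}$, and the main obstacle is reconciling the precise statement with this computation. I would follow the paper's intended reading, namely that a parameter equal to $\pm1$ in $A$ yields exactly one parameter equal to $\pm1$ in each covering, with no others arising under the standing hypotheses.

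For part b), consider the projection $\pi\colon (i,j)\mapsto i$. The formula for $\nu^{[m]}$ gives $\pi\circ\nu^{[m]}=\nu\circ\pi$, so $\pi$ sends $\nu^{[m]}$-orbits into $\nu$-orbits, which proves the first claim. If $\nu^{[m]}$ is transitive, then for any $i,i'$ the elements $(i,0)$ and $(i',0)$ lie in one $\nu^{[m]}$-orbit. Hence $i$ and $i'$ lie in one $\nu$-orbit, and $\nu$ is transitive.
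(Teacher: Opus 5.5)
Your route is the paper's. The paper proves both parts by a one-line appeal to the formulas in Proposition~\ref{prop:covering of Gorenstein order}. Your argument for b) is exactly what that appeal amounts to: the projection $(i,j)\mapsto i$ intertwines $\nu^{[m]}$ with $\nu$. That part is complete.

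For a), your suspicion is correct, and you should state it firmly rather than leave it open. The statement is false as written, and the paper's one-line proof glosses over this.
\begin{itemize}
\item Take $k[x]$ with $\deg x=2$. It has $p=2$, while its $2$-covering is $k[t]\times k[t]$ with $\deg t=1$, whose parameters are $1,1$.
\item Take the $\N$-graded Gorenstein tiled order with ${\rm v}(A)=\begin{bsmallmatrix}0&3\\3&0\end{bsmallmatrix}$. It has $p_1=p_2=-2$, whereas $A^{[2]}$ has all four parameters equal to $-1$ (all its $b$-invariants are $2$).
\end{itemize}

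What the formula actually gives is this: for $p_i\in\{-1,0,1\}$, the fibre over $i$ consists of one copy of $p_i$ and $m-1$ zeros. Hence the number of $1$'s is invariant when all $p_i\le 1$, and the number of $-1$'s is invariant when all $p_i\ge -1$.

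Your proposed repair handles the first case correctly, since $p_i\le 1$ holds automatically for $\N$-graded Gorenstein orders. But ``no others arising under the standing hypotheses'' is wrong for $-1$. Being $\N$-graded does not exclude $b$-invariants $\ge 3$, so you must assume explicitly that all $b$-invariants are at most $2$, as holds after Lemma~\ref{lem:reduction}.

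There are also two minor slips, neither of which affects the conclusion.
\begin{itemize}
\item For $p_i\ge 2$, the number of $j$ with $\lfloor\frac{p_i+j}{m}\rfloor=1$ is $p_i$ only when $p_i\le m$; otherwise it is $\max(0,2m-p_i)$.
\item Your interval in the $-1$ case is correct only for $p_i\ge -m$.
\end{itemize}
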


\begin{proof}
These are consequences of Proposition~\ref{prop:covering of Gorenstein order}.
\end{proof}

If $A$ is an Artin--Schelter Gorenstein algebra, then the equivalence $F$ restricts to \linebreak $\CM \! ^{\Z}_0 A \iso \CM \! ^{\Z}_0 A^{[m]}$ and induces the equivalence $\ul{\CM \!}^{\Z}_0 A \iso \ul{\CM \!}^{\Z}_0 A^{[m]}$, still denoted by $F$.

Let $A$ be a ring-indecomposable $\N$-graded Artin--Schelter Gorenstein algebra of dimension $1$ such that the global dimension of $A_0$ is finite. Recall that we denote the graded total quotient algebra of $A$ by $Q$. Let $q_A$ be the minimal positive integer such that $\bigoplus_{i=1}^{q_A}Q(i)$ is an additive generator of the category of finitely generated projective graded $Q$-modules. By part~(1) of Theorem~5.1 of \cite{IyamaKimuraUeyama24}, the stable category $\ul{\CM \!}^{\Z}_0 A$ contains a standard silting object
\[
V_A=\bigoplus_{i\in \mathbb{I}}\bigoplus_{s=1}^{-p_i +q_A}e_{\nu i}A(s)_{\geq 0}\: .
\]
The following theorem shows that the equivalence $F$ maps the standard silting object to the standard silting object up to direct summands.

\begin{proposition} \label{prop:standard silting}
Let $A$ and $V_A$ be as above. Let $F$ be the equivalence~(\ref{eq:covering}).
\begin{itemize}
\item[a)] The additive subcategories of $\ul{\CM \!}^{\Z}_0 A^{[m]}$ generated by $FV_A$ and $V_{A^{[m]}}$ coincide.
\item[b)] The endomophism algebras of the standard silting objects in $\ul{\CM \!}^{\Z}_0 A$ and $\ul{\CM \!}^{\Z}_0 A^{[m]}$ are Morita equivalent.
\end{itemize}
\end{proposition}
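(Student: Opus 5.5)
The plan is to compute both sides explicitly and compare them summand by summand, using the description of $F$ on shifted projectives and the formulas of Proposition~\ref{prop:covering of Gorenstein order}. Part~b) then follows from part~a): if $\mathrm{add}\, FV_A=\mathrm{add}\, V_{A^{[m]}}$ and $F$ is a triangle equivalence, then $\End(V_A)\simeq \End(FV_A)$, and $\End(FV_A)$ is Morita equivalent to $\End(V_{A^{[m]}})$, because two objects with the same additive closure have Morita equivalent endomorphism algebras.

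\textbf{Step 1: the image of a single summand of $V_A$.} For $i\in\mathbb{I}$ and $s\in\Z$, $F$ sends $e_{\nu i}A(s)$ to a graded projective: $F(A(s))$ decomposes as a direct sum of the $e_{\cdot,j}A^{[m]}(\lceil\cdot/m\rceil)$, in the same way as $F(S_i(j))=S_{i,-j}(\lceil j/m\rceil)$ in the proof of Proposition~\ref{prop:covering of Gorenstein order}. Concretely, $F(e_{\nu i}A(s))\simeq e_{(\nu i,-s)}A^{[m]}(\lceil s/m\rceil)$. Since $F$ is defined degreewise, truncation commutes with it: $F(X_{\geq 0})=F(X)_{\geq 0}$, because the $j$-th component of $F(X)$ in degree $t$ is $X_{mt+j}$ with $0\le j\le m-1$, and $mt+j\geq 0$ if and only if $t\geq0$. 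Hence
\[
F(e_{\nu i}A(s)_{\geq 0})\simeq e_{(\nu i,-s)}A^{[m]}(\lceil s/m\rceil)_{\geq 0}.
\]

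\textbf{Step 2: matching index sets.} By Proposition~\ref{prop:covering of Gorenstein order}, $\nu^{[m]}(i,j)=(\nu i,p_i+j)$ and $p^{[m]}_{i,j}=\lfloor (p_i+j)/m\rfloor$. The summands of $V_{A^{[m]}}$ are therefore $e_{(\nu i,p_i+j)}A^{[m]}(t)_{\geq0}$ for $1\le t\le -\lfloor (p_i+j)/m\rfloor+q_{A^{[m]}}$. Writing $s=mt-p_i-j$ in the family from Step~1 gives $e_{(\nu i,p_i+j)}A^{[m]}(\lceil (mt-p_i-j)/m\rceil)_{\geq0}$; when $j$ runs over $0,\dots,m-1$, these $s$ cover each residue class. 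I also need to compare $q_A$ with $q_{A^{[m]}}$. Since $Q^{[m]}$ is the total quotient ring of $A^{[m]}$, $F$ sends projective graded $Q$-modules to projective graded $Q^{[m]}$-modules, and I expect $q_{A^{[m]}}=\lceil q_A/m\rceil$ or a similar relation. Rather than matching exact multiplicities, I would prove two inclusions up to $\mathrm{add}$, using that modules $X_{\geq0}$ with $X$ projective vanish in the stable category once the shift is large or nonpositive.

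\textbf{Step 3: two inclusions, and the main obstacle.} The main obstacle is the boundary behaviour of the ranges. The ranges $1\le s\le -p_i+q_A$ on one side and $1\le t\le -p^{[m]}+q_{A^{[m]}}$ on the other need not biject. Surplus summands are either projective (hence zero in $\ul{\CM \!}^{\Z}_0$), which happens when $e_{\nu i}A(s)_{\geq0}=e_{\nu i}A(s)$, or repetitions of summands already present, because distinct $s$ with the same $\lceil s/m\rceil$ and the same idempotent give isomorphic objects. For this I would use the intrinsic characterization of the standard silting object from the proof of Theorem~5.1 of \cite{IyamaKimuraUeyama24}. There, the additive closure of the standard silting object is the set of indecomposable nonprojective summands of $P_{\geq0}$, with $P$ running over the graded projectives $e_lA(s)$, $s\geq1$, for which $P_{\geq0}$ is not projective; equivalently, it is generated by truncations of all positive shifts of projectives. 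This description is manifestly preserved by $F$, because $F$ preserves projectives, positive shifts (up to ceilings) and truncations by Step~1. If that characterization holds, part~a) follows directly and the index bookkeeping of Step~2 is only a consistency check. Ring-indecomposability of $A$ and finiteness of $\mathrm{gl.dim}\,A_0$ are used only so that Theorem~5.1 of \cite{IyamaKimuraUeyama24} applies to both $A$ and $A^{[m]}$. For $A^{[m]}$, I would note that $(A^{[m]})_0$ is a triangular-type algebra built from $A_0,\dots,A_{m-1}$, so it still has finite global dimension.
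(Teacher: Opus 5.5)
Your Step~1 is correct and reproduces the paper's formula $FV_A=\bigoplus_{i}\bigoplus_{s=1}^{-p_i+q_A}e_{\nu i,-s}A^{[m]}(\lceil s/m\rceil)_{\geq 0}$. Deducing b) from a) is also fine.

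The gap is in Step~3. Your proof of a) rests on an ``intrinsic characterization'' of $\mathrm{add}\,V_A$ that you do not prove: you write ``if that characterization holds''. Your account of the surplus summands is also wrong. Finish the bookkeeping of Step~2 for a summand $e_{(\nu i,p_i+j)}A^{[m]}(t)_{\geq 0}$ of $V_{A^{[m]}}$. It equals $F(e_{\nu i}A(s)_{\geq 0})$ with $s=mt-\rho$, where $\rho\in\{0,\ldots,m-1\}$ is the residue of $p_i+j$ modulo $m$. As $(j,t)$ vary, these $s$ run bijectively over $1\leq s\leq -p_i+mq_{A^{[m]}}$.

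So $V_{A^{[m]}}$ is $F$ applied to the same kind of sum as $V_A$, over a longer range. The extra summands are $F(e_{\nu i}A(s)_{\geq 0})$ with $-p_i+q_A<s\leq -p_i+mq_{A^{[m]}}$. Neither of your mechanisms disposes of them:
\begin{itemize}
\item $e_{\nu i}A(s)_{\geq 0}=e_{\nu i}A(s)$ never holds for $s\geq 1$, since $A$ is $\N$-graded and $e_{\nu i}A_0\neq 0$.
\item Distinct $s$ with equal $\lceil s/m\rceil$ carry distinct idempotents $e_{(\nu i,-s)}$, so no repetitions of that kind occur.
\end{itemize}
Showing that these extra objects lie in $\mathrm{add}\,FV_A$ is the whole content of a).

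The missing idea is that silting objects are maximal among presilting objects: if $T$ is silting and $T\oplus X$ is presilting, then $X\in\mathrm{add}\,T$. Here $FV_A$ is silting because $F$ is a triangle equivalence, and $V_{A^{[m]}}$ is silting. The computation above shows $FV_A$ is a direct summand of $V_{A^{[m]}}$ as soon as $mq_{A^{[m]}}\geq q_A$. That inequality holds because $F$ identifies $\bigoplus_{s=1}^{mq_{A^{[m]}}}Q(s)$ with the additive generator $\bigoplus_{t=1}^{q_{A^{[m]}}}Q^{[m]}(t)$, so minimality of $q_A$ applies. Maximality of $FV_A$ then gives $\mathrm{add}\,FV_A=\mathrm{add}\,V_{A^{[m]}}$ with no boundary analysis. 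This is exactly the paper's proof.

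Your characterization is in fact true, and it follows from this same argument. Take $m\geq q_A$, so that $q_{A^{[m]}}=1$. Then $\bigoplus_i\bigoplus_{s=1}^{-p_i+m}e_{\nu i}A(s)_{\geq 0}$ is silting and contains $V_A$ as a summand, so it lies in $\mathrm{add}\,V_A$; letting $m$ grow covers every $s\geq 1$. But it cannot serve as an unproved input, and the proof of Theorem~5.1 of \cite{IyamaKimuraUeyama24} is not shown to supply it.
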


\begin{proof}
a) By definition, we have
\[
FV_A=\bigoplus_{i\in \mathbb{I}}\bigoplus_{s=1}^{-p_i +q_A}e_{\nu i, -s}A^{[m]}(\lceil \frac{s}{m}\rceil)_{\geq 0}\: .
\]
By Proposition~\ref{prop:covering of Gorenstein order}, we have
\[
V_{A^{[m]}}=\bigoplus_{i\in \mathbb{I}}\bigoplus_{j=0}^{m-1}\bigoplus_{s=1}^{-\lfloor \frac{p_i+j}{m}\rfloor +q_{A^{[m]}}}e_{\nu i, p_i+j}A^{[m]}(s)_{\geq 0}\: .
\]
Since both of them are silting objects in $\ul{\CM \!}^{\Z}_0 A^{[m]}$, they are maximal presilting objects. On the other hand, we have $mq_{A^{[m]}}\geq q_A$. This means that the object $FV_A$ is a direct summand of $V_{A^{[m]}}$. So the additive subcategories generated by them coincide.

b) This is a direct consequence of part~a).
\end{proof}

As we remarked in Section~\ref{ss:graded Morita equivalences}, any graded Morita equivalence is a composition of those induced by shifting a direct summand of the free graded module by degree $1$ or $-1$. The following proposition gives the compatibility of graded Morita equivalences and coverings.

\begin{proposition} \label{prop:compatibility}
Let $A$ be a $\Z$-graded algebra and $\{e_1, \ldots, e_n\}$ a complete set of primitive orthogonal idempotents of $A_0$. Let $P$ be the graded $A$-module $(A/e_{i_0}A)\oplus e_{i_0}A(1)$ and $B$ its graded endomorphism algebra. Then the $\Z$-graded algebra $B^{[m]}$ is isomorphic to the graded endomorphism algebra of the graded $A^{[m]}$-module $(A^{[m]}/e_{i_0, m-1}A^{[m]})\oplus e_{i_0, m-1}A^{[m]}(1)$.
\end{proposition}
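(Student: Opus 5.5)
Throughout, write $P_A=(A/e_{i_0}A)\oplus e_{i_0}A(1)$ for the module in the statement. The plan is to transport $P_A$ through the covering equivalence $F$ of (\ref{eq:covering}) and then identify endomorphism algebras on both sides. Since $F\colon \Mod\!^{\Z}A\iso \Mod\!^{\Z}A^{[m]}$ is an equivalence, it is in particular fully faithful. We first check whether $F$ commutes with the shift $(m)$ on $A$ up to the shift $(1)$ on $A^{[m]}$. Directly from the formula $FX=[X_{m\Z}\ \cdots\ X_{m\Z+m-1}]$ we get $F(X(m))=(FX)(1)$.

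The first step is to identify $F$ on the shifted indecomposable projectives. Using the description $F(e_iA(j))\simeq e_{i,-j}A^{[m]}(\lceil j/m\rceil)$, which is implicit in the proof of Proposition~\ref{prop:covering of Gorenstein order} (take projective covers of the simples there), we obtain
\[
F\Big(\bigoplus_{j=0}^{m-1}P_A(-j)\Big)\simeq \big(A^{[m]}/e_{i_0,m-1}A^{[m]}\big)\oplus e_{i_0,m-1}A^{[m]}(1).
\]
The only summand that changes is $e_{i_0}A(1-j)$ with $j=0$. Its image is $F(e_{i_0}A(1))=e_{i_0,-1}A^{[m]}(1)=e_{i_0,m-1}A^{[m]}(1)$. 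The summands with $1\le j\le m-1$ are $e_{i_0}A(1-j)$, and they map to $e_{i_0,j-1}A^{[m]}$. Together with the images of the $e_iA(-j)$, $i\neq i_0$, these reproduce every unshifted summand of $A^{[m]}$ except $e_{i_0,m-1}A^{[m]}$. Write $P'$ for this image module.

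The second step is to compute the graded endomorphism algebra of $P'$. Write $\underline{\End}$ for the graded endomorphism ring in $\Mod\!^{\Z}A^{[m]}$. Its degree-$n$ part is $\Hom_{\Mod^{\Z}A^{[m]}}(P',P'(n))$. Using $F(X(m))=(FX)(1)$ and full faithfulness, this equals
\[
\Hom_{\Mod^{\Z}A}\Big(\bigoplus_{j}P_A(-j),\ \bigoplus_{j'}P_A(-j'+mn)\Big).
\]
The $(j',j)$ block of this is $\Hom(P_A,P_A(j-j'+mn))=B_{mn+j-j'}$, exactly the $(j',j)$ block of the definition of $B^{[m]}$ in degree $n$, with rows and columns indexed by $0,\ldots,m-1$. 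This gives a bijection on each graded piece, and multiplication is preserved because it is composition on both sides. Hence $B^{[m]}\simeq \underline{\End}(P')$ as $\Z$-graded algebras.

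The main obstacle is bookkeeping. One has to check that the block indexing and sign conventions, $e_{i,-s}$ versus $e_{i,s}$ and $\lceil\cdot\rceil$, match the matrix convention $A_{m\Z+j-j'}$ in the definition of $A^{[m]}$, so that the isomorphism is the identity on blocks rather than a transpose. I would first verify the case $B=A$, that is, $P_A$ replaced by $A$, where the argument must recover $A^{[m]}=\underline{\End}(F\bigoplus_j A(-j))$. This fixes the conventions, and the general case then follows verbatim.
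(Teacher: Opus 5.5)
Your argument is correct. It is organised differently from the paper, whose proof consists only of the phrase ``direct calculation''. Presumably that means writing $B\simeq (\sum_i e_i(l_i))A(\sum_i e_i(-l_i))$, with $l_{i_0}=1$ and $l_i=0$ otherwise, as an explicit matrix algebra. One then forms its $m$-th covering entrywise and compares with the analogous explicit description of the graded endomorphism algebra of $(A^{[m]}/e_{i_0,m-1}A^{[m]})\oplus e_{i_0,m-1}A^{[m]}(1)$.

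You instead prove the general identity
\[
(\End^{\Z}_A P)^{[m]}\simeq \End^{\Z}_{A^{[m]}}\Big(F\big(\textstyle\bigoplus_{j=0}^{m-1}P(-j)\big)\Big)
\]
for an arbitrary graded $A$-module $P$. This uses only full faithfulness of $F$ and the strict identity $F\circ(m)=(1)\circ F$. The proposition then reduces to computing $F$ on shifted indecomposable projectives.

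The bookkeeping you flagged does work out. Take $(B^{[m]})_n$ with $(j',j)$-entry $B_{mn+j-j'}$, and multiply in $B$ by $fg=f(s)\circ g$ for $g\in B_s$. Then the identification $\Hom(P(-j),P(-j'+mn))=B_{mn+j-j'}$ is blockwise, with no transpose. The product $\phi(n')\circ\psi$ of $\phi\in\Hom(P',P'(n))$ and $\psi\in\Hom(P',P'(n'))$ becomes exactly the matrix product in $B^{[m]}$.

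Your route gives more than the statement. It treats an arbitrary graded Morita equivalence $\bigoplus_i e_iA(l_i)$ at once, and it explains why precisely the idempotent $e_{i_0,m-1}$ gets shifted. The direct calculation is more elementary and does not use that $F$ is an equivalence, but it requires tracking indices and ceilings by hand.

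One justification should be replaced. Do not derive $F(e_iA(j))\simeq e_{i,-j}A^{[m]}(\lceil j/m\rceil)$ from projective covers of the simples $S_i$. The statement concerns an arbitrary $\Z$-graded algebra, for which graded projective covers need not exist; the simples $S_i$ are only introduced under the standing hypotheses of Section~\ref{ss:AS-Gorenstein algebras}. The formula is instead immediate from the definitions. For $0\le s\le m-1$, the degree-$n$ part of $F(e_iA(-s))$ is $\begin{bmatrix} e_iA_{mn-s} & \cdots & e_iA_{mn-s+m-1}\end{bmatrix}$, which is exactly the degree-$n$ part of $e_{i,s}A^{[m]}$. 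The general case then follows from $F\circ(m)=(1)\circ F$.
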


\begin{proof}
The statement follows from direct calculation.
\end{proof}

\section{Standard tilting objects and their endomorphism algebras}

We prove the existence of a tilting object in the stable category $\ul{\CM \!}^{\Z} A$ for arbitrary $\N$-graded Gorenstein tiled orders $A$, extending Theorem~8.2 of \cite{IyamaKimuraUeyama24}. We also prove that for any finite poset, its incidence algebra can be realized as the endomorphism algebra of the standard tilting object in $\ul{\CM \!}^{\Z} A$ for an $\N$-graded Gorenstein tiled orders $A$ if and only if it is either empty or has the maximum.

\subsection{Standard tilting objects}

In this section, we prove that for any $\N$-graded Gorenstein tiled orders $A$, the object
\[
V=\begin{bmatrix}
R & \cdots & R
\end{bmatrix}\oplus \bigoplus_{i\in \mathbb{I}}\bigoplus_{j=1}^{-p_i}e_{\nu i}A(j)_{\geq 0}
\]
in $\ul{\CM \!}^{\Z} A$ is tilting. Moreover, its endomorphism algebra is isomorphic to the incidence algebra of a finite poset which is either empty or has the maximum.

\begin{theorem} \label{thm:tilting object}
Let $A$ be an $\N$-graded Gorenstein tiled order.
\begin{itemize}
\item[a)] The object $V$ in $\ul{\CM \!}^{\Z} A$ is tilting.
\item[b)] The endomorphism algebra of the standard tilting object $V$ in $\ul{\CM \!}^{\Z} A$ is isomorphic to $k\V_A\op$.
\item[c)] The poset $\V_A$ is either empty or a finite poset with minimum.
\end{itemize}
\end{theorem}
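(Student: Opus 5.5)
Throughout I replace $A$ by a graded Morita equivalent basic $\N$-graded Gorenstein tiled order. The plan is to reduce to Theorem~8.2 of \cite{IyamaKimuraUeyama24}, which covers the case where all $b$-invariants $d(\nu i,i)$ are positive. That theorem gives a), b) and the poset description directly, so only the case with some vanishing $b$-invariant needs new work.

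\textbf{Step 1: the degenerate summands.} Suppose $d(\nu i,i)=0$, so $p_i=1$ and $i$ contributes no summands $e_{\nu i}A(j)_{\geq 0}$ to $V$. Since $A$ is $\N$-graded and a $b$-invariant is the largest entry in its row and column, the Gorenstein relation forces $d(\nu i,j)=d(j,i)=0$ for all $j$. Hence the $\nu i$-th row of ${\rm v}(A)$ is zero, and $e_{\nu i}A\simeq[R\ \cdots\ R]$. I would record precisely which vectors ${\rm v}(e_iA)$ coincide with ${\rm v}(e_{\nu i'}A(j)_{\geq 0})$, for $j\ge1$, or with ${\rm v}([R\ \cdots\ R])=0$. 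This shows that the set $\V_A$, obtained by deleting the projective vectors, is exactly the set of vectors of the non-projective summands of $V$ other than $[R\ \cdots\ R]$, after possibly identifying $[R\ \cdots\ R]$ with a projective.

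\textbf{Step 2: tilting (part a).} The object $V$ is silting by Theorem~5.1(1) of \cite{IyamaKimuraUeyama24}, since $\CM^{\Z}_0A=\CM^{\Z}A$ for tiled orders, whose singularities are isolated. So it remains to show
\[
\Hom_{\ul{\CM \!}^\Z A}(V,V[l])=0 \quad\text{for } l<0.
\]
I would compute this in the poset model. Each summand is a lattice ${\rm L}(v)$, and $\Hom({\rm L}(v),{\rm L}(w))$ is $k$ if $v\ge w$ componentwise and $0$ otherwise. Negative shifts are computed via syzygies, $\Omega{\rm L}(v)$, read off from projective covers in terms of the row vectors of ${\rm v}(A)$.

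Alternatively, and this is the route I would try first, I would reduce to the positive case using Proposition~\ref{prop:conjugacy} and coverings. By Proposition~\ref{prop:standard silting}, passing to a covering $A^{[m]}$ preserves $\operatorname{add}V$. By the formula in Example~\ref{ex:covering of tiled order}, a suitable conjugation $DAD^{-1}$ followed by coverings can make all $b$-invariants positive. The aim is to show that the tilting property transfers back: since $V$ is silting, being tilting is equivalent to $\End(V)$ being concentrated in degree zero under the equivalence.

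\textbf{Step 3: endomorphisms and the minimum (parts b and c).} With the vectors identified, the graded maps between summands are scalars along the componentwise order. One must check that no such map factors through a projective $e_jA(l)$ unless it vanishes in the stable category. Deleting the projective vectors is exactly what makes the stable Hom equal $k$ precisely when $v\le w$. This gives $\End(V)\simeq k\V_A\op$. For c), the zero vector ${\rm v}([R\ \cdots\ R])$ is below every vector ${\rm v}(e_iA(j)_{\ge0})$ because $A$ is $\N$-graded. So if $0\notin\{{\rm v}(e_iA)\}$, it is the minimum of $\V_A$. If $0={\rm v}(e_{\nu i}A)$ is deleted, I would show that $\V_A$ is empty or still has a unique minimal element, namely the minimum over $j\ge1$ of ${\rm v}(e_{\nu i'}A(j)_{\ge0})$.

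\textbf{Main obstacle.} I expect Step 3's factorization check, together with the vanishing of negative Homs, to be the main obstacle in the case of zero $b$-invariants. I would handle it by the explicit syzygy computation in the lattice model.
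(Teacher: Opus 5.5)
Your proposal has a genuine gap in part a), in the one case that needs new work: a zero $b$-invariant, i.e.\ some Gorenstein parameter $p_{i'}=1$.

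\textbf{Your reduction to the positive case fails.}
\begin{itemize}
\item Coverings cannot remove a zero $b$-invariant. By Proposition~\ref{prop:covering of Gorenstein order}, $p_i=1$ gives $p^{[m]}_{i,m-1}=1$. In fact the number of Gorenstein parameters equal to $1$ is invariant under coverings, by the proposition following it.
\item Conjugation $DAD^{-1}$ (with $x^{-1}$ in the $i$-th slot) does make all $b$-invariants positive. But the induced equivalence $\ul{\CM \!}^{\Z} A\simeq \ul{\CM \!}^{\Z}(DAD^{-1})$ does not send $V_A$ to $V_{DAD^{-1}}$. The truncation $(-)_{\geq 0}$ does not commute with the regrading, and Proposition~\ref{prop:conjugacy} shows the maximum of $\V_A$ is traded for a new minimum. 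The two endomorphism algebras are in general only derived equivalent. So tiltingness of $V_{DAD^{-1}}$ tells you nothing about $V_A$ without a further argument, which you do not give.
\item ``Silting plus $\End$ concentrated in degree zero'' is just the definition of tilting, so it transfers nothing.
\item Your fallback, a syzygy computation in the lattice model, is too vague to count. $\Si^{-l}{\rm L}(v)=\Omega^{l}{\rm L}(v)$ is no longer a rank-one lattice, so the componentwise-order description of morphisms does not apply to it.
\end{itemize}

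\textbf{The missing ingredient is Lemma~\ref{lem:tilting}.} For $j>0$, $1\leq j'\leq -p_i$ and $l>0$ it states
\[
\Hom_{\ul{\CM \!}^{\Z} A}(A(j)_{\geq 0},\Si^{-l}e_{\nu i}A(j')_{\geq 0})=0 .
\]
Following the proof of Lemma~5.6(2) of \cite{IyamaKimuraUeyama24}, this reduces to $\Hom_k(Me_{\nu i'},Ne_{\nu i'})_0=0$ for all $i'$. Here $M=\Omega^{l-1}(e_{\nu i}A(j')_{\geq 0})$ and $N=(A(j)/A(j)_{\geq 0})\ten_A\omega$, and one compares degree supports.
\begin{itemize}
\item If $p_{i'}\leq 0$, then $Me_{\nu i'}$ lies in degrees $\geq 0$ and $Ne_{\nu i'}$ in degrees $<0$.
\item If $p_{i'}=1$, then $Ne_{\nu i'}$ lies only in degrees $\leq 0$. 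The isomorphism $e_{\nu i}A(j')e_{\nu i'}\simeq e_iA(j'-1+p_i)e_{i'}$, together with $j'\leq -p_i$, puts $Me_{\nu i'}$ in strictly positive degrees.
\end{itemize}
Combined with silting from Theorem~5.1(1) of \cite{IyamaKimuraUeyama24}, this gives a).

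\textbf{Part c) is also only asserted.} You claim a minimum survives when $0={\rm v}(e_{\nu i}A)$ is deleted, but give no reason. The paper's argument (Lemma~\ref{lem:maximum and minimum}) runs as follows.
\begin{itemize}
\item Since row $\nu i$ vanishes, $d(j,\nu i)=d(j,\nu^{-1}j)$ is the largest entry of row $j$.
\item Hence every nonzero element of $\V_A$ dominates the $\nu i$-th unit vector.
\item That unit vector equals ${\rm v}(e_{\nu^2 i}A(d(\nu^2 i,\nu i)-1)_{\geq 0})$, and $d(\nu^2 i,\nu i)\geq 2$ whenever $\V_A\neq\emptyset$.
\end{itemize}

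Your treatment of b) (scalar maps along the componentwise order, plus the check that none factors through a projective) is in line with the paper, which takes it from the proof of Theorem~8.2(3) of \cite{IyamaKimuraUeyama24}.
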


For example, let $A$ be as in Example~\ref{ex:poset}, then stable category $\ul{\CM \!}^\Z A$ is triangle equivalent to $\per k(A_{m-1}\times A_{n-1})$.
To prove Theorem~\ref{thm:tilting object}, we prepare the following lemmas.

\begin{lemma} \label{lem:tilting}
Let $A$ be an $\N$-graded Artin--Schelter Gorenstein algebra with Nakayama permutation $\nu$ and Gorenstein parameters $p_i\leq 1$, $i\in \mathbb{I}$. For any $i\in \mathbb{I}$, $j>0$, $1\leq j'\leq -p_i$, and $l>0$, we have
\[
\Hom_{\ul{\CM \!}^{\Z} A}(A(j)_{\geq 0}, \Si^{-l}e_{\nu i}A(j')_{\geq 0})=0 \: .
\]
\end{lemma}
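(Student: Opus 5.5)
The plan is to pass from the stable category to the graded singularity category and then to the bounded derived category of finite-dimensional graded modules, where negative extensions between modules vanish.

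\emph{Translation.} By Buchweitz we have $\ul{\CM \!}^{\Z} A\simeq D_{\rm sg}^{\Z}(A)=D^b(\mathrm{mod}^{\Z}A)/\per^{\Z}A$, with $\Si$ corresponding to the shift $[1]$. For any graded projective $P$ and integer $j$ there is an exact sequence
\[
0\to P(j)_{\geq 0}\to P(j)\to P(j)_{<0}\to 0 ,
\]
whose right-hand term is finite-dimensional. In $D_{\rm sg}^{\Z}(A)$ this gives $P(j)_{\geq 0}\simeq \Si^{-1}P(j)_{<0}$. Put
\[
X=A(j)_{<0}, \qquad T=e_{\nu i}A(j')_{<0} .
\]
Then $X$ is finite-dimensional and concentrated in degrees $[-j,-1]$, and $T$ is concentrated in degrees $[-j',-1]$ with $j'\leq -p_i$. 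The Hom space in the statement becomes
\[
\Hom_{D_{\rm sg}^{\Z}(A)}(X,\Si^{-l}T).
\]

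\emph{Comparison with $D^b$.} I next want to show that the localization map
\[
\Hom_{D^b(\mathrm{mod}^{\Z}A)}(X,\Si^{-l}T)\to\Hom_{D_{\rm sg}^{\Z}(A)}(X,\Si^{-l}T)
\]
is surjective. The left-hand side vanishes for $l>0$, since $X$ and $T$ are modules and $\Si^{-l}$ is a negative shift. Surjectivity would therefore give the lemma.

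To get surjectivity, I would use the standard Orlov/Iyama--Kimura--Ueyama description: a morphism $X\to \Si^{-l}T$ in $D_{\rm sg}$ is a roof $X\leftarrow X'\to \Si^{-l}T$, where the cone of $X'\to X$ is perfect. It then suffices to show:
\begin{itemize}
\item[(1)] $\Hom_{D^b}(\mathcal{P},\Si^{s}T)=0$ for all $s$, where $\mathcal{P}$ is the thick subcategory generated by the relevant projectives $e_{i'}A(t)$, namely those needed to resolve $X$ modulo a window;
\item[(2)] the morphisms $X\to \Si^{s}P$ with $P\in\mathcal{P}$ vanish.
\end{itemize}

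For (2), the AS-Gorenstein property in dimension $1$ shows that $\mathbf{R}\Hom_A(X,A)$ is concentrated in cohomological degree $1$. It is a finite-dimensional graded $A\op$-module whose composition factors are $S\op_{\nu i'}(p_{i'}+t)$ for the composition factors $S_{i'}(t)$ of $X$.

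For (1), I would use the same computation for $T$ and check degrees. The composition factors of $T$ are $S_{\nu i}$-, and more generally $S_{i''}$-, twisted in degrees $-j',\ldots,-1$. Since $A$ is $\N$-graded and $p_{i'}\leq 1$, the duals live in degrees that are separated from the window where $T$ lives; the bound $1\leq j'\leq -p_i$ gives exactly that $T$ sits strictly inside the window controlled by the Gorenstein parameter of $e_{\nu i}$.

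\emph{Main obstacle.} The hard step is the degree bookkeeping in (1) and (2). I must check that the constraints $p_i\leq 1$ and $j'\leq -p_i$ are precisely what makes every relevant $\Hom$ into or out of perfect complexes vanish, so that no morphism in $D_{\rm sg}$ fails to lift to $D^b$. If a direct verification gets messy, I would first try instead to dualize via Auslander--Reiten--Serre duality on $\ul{\CM \!}^{\Z}_0A$. This turns the statement into the vanishing of a positive $\Ext$, namely $\Ext^{\,l'}_{A}$ between $e_{\nu i}A(j')_{\geq0}$ and a twist of $A(j)_{\geq0}$, which can be computed from the explicit short exact sequences above together with $\Ext^1_A(S_{i'}(t),A)\simeq S\op_{\nu i'}(p_{i'}+t)$.
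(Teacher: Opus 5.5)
\textbf{There is a genuine gap: the proposal stops before the degree computation that is the actual proof, and the mechanism you sketch for it does not work as stated.}

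\textbf{The reduction and the uniform-window mechanism.} Since $\Hom_{D^b}(X,\Si^{-l}T)=0$, surjectivity of $\Hom_{D^b}(X,\Si^{-l}T)\to\Hom_{D_{\rm sg}}(X,\Si^{-l}T)$ is equivalent to the lemma itself. So the reduction carries no content on its own, and everything rests on (1)--(2), which you leave unverified.

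As formulated, (1)--(2) are also not sufficient. A single thick $\mathcal{P}$ with $T\in\mathcal{P}^{\perp}$ and $X\in{}^{\perp}\mathcal{P}$ only controls roofs whose cone lies in $\mathcal{P}$. What you need is a splitting of all of $\per A$ into thick pieces $\mathcal{P}_T$, $\mathcal{P}_X$, with every perfect complex an extension of the two, such that $\Hom(\mathcal{P}_T,\Si^{*}T)=0$, $\Hom(X,\Si^{*}\mathcal{P}_X)=0$ and $\Hom(\mathcal{P}_T,\Si^{*}\mathcal{P}_X)=0$.

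With Orlov-type uniform degree windows this is impossible in general. First, $\Ext^1_A(S_{i'}(u),A)\simeq S\op_{\nu i'}(p_{i'}-u)$, not $S\op_{\nu i'}(p_{i'}+u)$ as you wrote. Consequently, a composition factor $S_{i'}(1)$ of $X$ with $p_{i'}\in\{0,1\}$ gives $\Hom_{D^b}(X,\Si e_{\nu i'}A(1-p_{i'}))\neq 0$. But $e_{\nu i'}A(1-p_{i'})$ is generated in degree $p_{i'}-1\geq -j'$, so it cannot lie below a window that must contain the degrees $[-j',-1]$ of $T$.

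This already happens for the exponent matrix $\begin{bsmallmatrix}0&1\\3&0\end{bsmallmatrix}$ with $i=1$ (so $p_1=-2$, $p_2=0$), any $j\geq 1$ and $j'\in\{1,2\}$. So your assertion that ``the duals live in degrees separated from the window where $T$ lives'' is false. The separation holds only vertex by vertex, and only because of an extra structural identity.

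\textbf{The missing ingredient.} That identity is what the proposal lacks. The paper invokes the proof of Lemma~5.6(2) of \cite{IyamaKimuraUeyama24}; your Serre-duality fallback leads to essentially the same reduction, since the relevant positive $\Ext$ is a quotient of a degree-$0$ Hom space. This reduces the lemma to $\Hom_k(Me_{\nu i'},Ne_{\nu i'})_0=0$ for all $i'$, where $M=\Omega^{l-1}(e_{\nu i}A(j')_{\geq 0})$ and $N=(A(j)/A(j)_{\geq 0})\otimes_A\omega$. The degree comparison then goes as follows.
\begin{itemize}
\item[(i)] $Ne_{\nu i'}$ lives in negative degrees if $p_{i'}\leq 0$, and in non-positive degrees if $p_{i'}=1$.
\item[(ii)] $Me_{\nu i'}$ lives in non-negative degrees in all cases.
\item[(iii)] When $p_{i'}=1$, $Me_{\nu i'}$ even lives in positive degrees. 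This uses the Nakayama-twist isomorphism $e_{\nu i}A(j')e_{\nu i'}\simeq e_iA(j'-1+p_i)e_{i'}$, whose degree-$0$ part vanishes precisely because $j'\leq -p_i$ and $A$ is $\N$-graded.
\end{itemize}
This case split on $p_{i'}$, together with the twist identity, is where the hypotheses $p_{i'}\leq 1$ and $j'\leq -p_i$ actually do their work. Your proposal names this bookkeeping as the main obstacle but contains no substitute for it, so neither your main route nor the fallback closes as written.

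A vertex-by-vertex splitting of $\per A$ in your $D_{\rm sg}$ picture can in fact be made to work for tiled orders. But it again needs the analogous relation $d(\nu i,\nu i')=d(i,i')+p_{i'}-p_i$, which follows from the Gorenstein condition.
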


\begin{proof}
Denote $M=\Omega^{l-1}(e_{\nu i}A(j')_{\geq 0})$ and $N=(A(j)/A(j)_{\geq 0})\ten_A \omega$. By the proof of part~(2) of Lemma~5.6 of \cite{IyamaKimuraUeyama24}, it suffices to show that we have $\Hom_k(Me_{\nu i'}, Ne_{\nu i'})_0=0$ for all $i'\in \mathbb{I}$.

We first consider the case that we have $p_{i'}\leq 0$. Since the graded vector space \linebreak $e_{\nu i}A(j')_{\geq 0}e_{\nu i'}$ is concentrated in non-negative degrees, so is the graded vector space $Me_{\nu i'}$. By part~(1) of Proposition~4.2 of \cite{IyamaKimuraUeyama24}, the graded vector space $Ne_{\nu i'}$ is concentrated in negative degrees. Therefore, we have $\Hom_k(Me_{\nu i'}, Ne_{\nu i'})_0=0$ in this case.

We now consider the case that we have $p_{i'}=1$. By part~(2) of Proposition~4.2 of \cite{IyamaKimuraUeyama24}, we have an isomorphism $e_{\nu i}A(j')e_{\nu i'} \simeq e_i A(j'-1+p_i)e_{i'}$ of graded vector spaces. This implies that the graded vector space $e_{\nu i}A(j')_{\geq 0}e_{\nu i'}$ is concentrated in positive degrees and hence so is the graded vector space $Me_{\nu i'}$.
By part~(1) of Proposition~4.2 of \cite{IyamaKimuraUeyama24}, the graded vector space $Ne_{\nu i'}$ is concentrated in non-positive degrees. Therefore, we also have $\Hom_k(Me_{\nu i'}, Ne_{\nu i'})_0=0$ in this case. This concludes the proof.
\end{proof}

\begin{lemma} \label{lem:basic}
Let $A$ be a basic $\N$-graded Gorenstein tiled order with Nakayama permutation $\nu$.
\begin{itemize}
\item[a)] At most one $b$-invariant of $A$ equals $0$.
\item[b)] For any $i\neq j$, not all of the integers $d(\nu i, i)$, $d(\nu i, j)$, $d(\nu j, i)$, $d(\nu j, j)$ are equal.
\end{itemize}
\end{lemma}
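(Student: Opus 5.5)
The plan is to work directly with the Gorenstein identity
\[
d(\nu i, j)+d(j, i)=d(\nu i, i)\qquad\text{for all } i, j,
\]
together with three further facts: $A$ is $\N$-graded, so all entries satisfy $d(i,j)\geq 0$; $A$ is basic, so $d(i,j)+d(j,i)>0$ for $i\neq j$; and the observation in Section~\ref{ss:Gorenstein tiled orders} that the $b$-invariant $d(\nu i,i)$ is the greatest entry in its row and in its column.

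For part~a), suppose two distinct indices $i\neq j$ have $d(\nu i,i)=0=d(\nu j,j)$. Since $d(\nu i,i)$ is the largest entry in row $\nu i$ and column $i$, and all entries are non-negative, row $\nu i$ and column $i$ vanish identically. The same holds for row $\nu j$ and column $j$. Then $d(\nu i,j)=0$ and $d(\nu j,i)=0$, and these are also dominated by $d(\nu j,j)=0$ and $d(\nu i,i)=0$ respectively.

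The aim is to contradict basicness. The Gorenstein identity with the roles chosen as $(i,j)$ gives $d(\nu i,j)+d(j,i)=d(\nu i,i)=0$, so $d(j,i)=0$. Symmetrically, $d(\nu j,i)+d(i,j)=d(\nu j,j)=0$ gives $d(i,j)=0$. Hence $d(i,j)+d(j,i)=0$ with $i\neq j$, which contradicts basicness. So at most one $b$-invariant vanishes.

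For part~b), suppose instead that $d(\nu i,i)=d(\nu i,j)=d(\nu j,i)=d(\nu j,j)=c$ for some $i\neq j$. Applying the Gorenstein identity at $i$ with the auxiliary index $j$ gives $d(\nu i,j)+d(j,i)=d(\nu i,i)$, that is, $c+d(j,i)=c$, so $d(j,i)=0$. Applying it at $j$ with the auxiliary index $i$ gives $d(\nu j,i)+d(i,j)=d(\nu j,j)$, so $d(i,j)=0$. Again $d(i,j)+d(j,i)=0$, contradicting basicness.

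In fact part~a) is a special case of this argument with $c=0$, once one checks that $d(\nu i,j)=d(\nu j,i)=0$ via the row and column maximality. So I would present part~b) first, or present part~a) through it.

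The only step needing care is that row and column maximality in part~a). It is quoted in Section~\ref{ss:Gorenstein tiled orders}, and it also follows directly: $d(\nu i,j)\leq d(\nu i,i)$ because $d(j,i)\geq 0$. For the column statement, apply the identity in the form $d(\nu i,i)=d(\nu i,l)+d(l,i)\geq d(l,i)$, using $d(\nu i,l)\geq 0$. Beyond this there is no real obstacle.
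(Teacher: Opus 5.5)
Your proof is correct and is essentially the paper's argument: both parts reduce to the Gorenstein identities $d(\nu j,i)+d(i,j)=d(\nu j,j)$ and $d(\nu i,j)+d(j,i)=d(\nu i,i)$, which together with non-negativity of the entries force $d(i,j)=d(j,i)=0$, contradicting basicness. Your detour through row/column maximality in part~a), equivalently reducing a) to b) with $c=0$, only repackages the paper's one-line inequality $d(i,j)=d(\nu j,j)-d(\nu j,i)\leq d(\nu j,j)=0$.
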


\begin{proof}
a) Assume that there are distinct $i$ and $j$ satisfying $d(\nu i, i)=d(\nu j, j)=0$, then we have $d(i, j)=d(\nu j, j)-d(\nu j, i)\leq d(\nu j, j)=0$. This implies that we have $d(i, j)=0$. Similarly, we have $d(j, i)=0$. This contradicts to that $A$ is basic.

b) If we have $d(\nu i, i)=d(\nu i, j)=d(\nu j, i)=d(\nu j, j)$, then it follows that we have $d(i, j)=d(\nu j, j)-d(\nu j, i)=0$ and $d(j, i)=d(\nu i, i)-d(\nu i, j)=0$. This contradicts to that $A$ is basic.
\end{proof}

\begin{lemma} \label{lem:maximum and minimum}
Let $A$ be a basic $\N$-graded Gorenstein tiled order. If $A$ has a zero $b$-invariant, then the poset $\V_A$ is either empty, or has the maximum and the minimum.
\end{lemma}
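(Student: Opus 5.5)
The plan is to make the poset $\V_A$ completely explicit in terms of the quasi-metric $d$, and then read off a maximum and a minimum from the triangle inequality together with the Gorenstein identity $d(\nu i,k)+d(k,i)=d(\nu i,i)$.

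\textbf{Explicit description.} Since the degree-$n$ part of $A(j)$ is $A_{n+j}$, we have
\[
{\rm v}(e_iA(j)_{\geq 0})=\bigl(\max\{d(i,k)-j,0\}\bigr)_{k\in\mathbb{I}} .
\]
So $\V_A$ consists of the truncated shifted rows $(d(i,\cdot)-j)_+$ with $j\geq 1$, minus the rows $d(l,\cdot)$ of ${\rm v}(A)$, ordered componentwise.

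\textbf{Consequences of the zero $b$-invariant.} Let $i_0$ be the index with $d(\nu i_0,i_0)=0$; it is unique by Lemma~\ref{lem:basic}~a). Since $A$ is $\N$-graded, a $b$-invariant is the greatest entry in its row and column. Hence row $\nu i_0$ and column $i_0$ of ${\rm v}(A)$ vanish. Two consequences:
\begin{itemize}
\item Every element of $\V_A$ has $i_0$-coordinate $0$.
\item The zero vector equals ${\rm v}(e_{\nu i_0}A)$, so it is excluded from $\V_A$.
\end{itemize}
The triangle inequality then gives, for all $i$ and $k$,
\[
d(i,k)\leq d(i,i_0)+d(i_0,k)=d(i_0,k), \qquad d(k,i)\leq d(k,\nu i_0)+d(\nu i_0,i)=d(k,\nu i_0).
\]
So row $i_0$ dominates every row, and column $\nu i_0$ dominates every column.

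\textbf{Maximum.} From row dominance, every element of $\V_A$ is $\leq (d(i_0,\cdot)-1)_+={\rm v}(e_{i_0}A(1)_{\geq 0})$. It then remains to show this vector is not a row $d(l,\cdot)$ of ${\rm v}(A)$, unless $\V_A$ is empty. If $(d(i_0,\cdot)-1)_+=d(l,\cdot)$, I would derive a contradiction with basicness (Lemma~\ref{lem:basic}~b)) or with the Gorenstein identity. Failing that, I would show every element of $\V_A$ is then a row, so $\V_A=\emptyset$.

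\textbf{Minimum.} I would exploit the Gorenstein duality $d(\nu i,k)=b_i-d(k,i)$, where $b_i=d(\nu i,i)$, which gives
\[
{\rm v}(e_{\nu i}A(j)_{\geq 0})=\bigl((b_i-j)-d(k,i)\bigr)_+ .
\]
In this form, increasing $i$ along column dominance pushes the vector down. Because column $\nu i_0$ dominates, the natural candidate for the minimum is the smallest nonzero vector of the form $(c-d(\cdot,\nu i_0))_+$. Concretely, I would take the largest $j$ for which ${\rm v}(e_{\nu^2 i_0}A(j)_{\geq 0})$ is nonzero, equivalently a $0/1$-vector supported on the set where $d(\cdot,\nu i_0)$ is minimal. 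I would then check that it lies below every nonzero truncated row.

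\textbf{Main obstacle.} The hardest step is the minimum: proving this candidate is below every element, and that it is not itself a row of ${\rm v}(A)$. I would first try to reduce it to the maximum argument by the transpose duality $d\mapsto d^t$ combined with $\nu$, which exchanges $(i_0,\nu i_0)$ and reverses the relevant inclusions. Failing that, I would compare coordinatewise using $d(k,i)\leq d(k,\nu i_0)$.
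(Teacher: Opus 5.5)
Your strategy is the paper's. The maximum is ${\rm v}(e_{i_0}A(1)_{\geq 0})$ because row $i_0$ dominates every row. The minimum is ${\rm v}(e_{\nu^2 i_0}A(b_{\nu i_0}-1)_{\geq 0})$, with $b_{\nu i_0}=d(\nu^2 i_0,\nu i_0)$, because column $\nu i_0$ dominates every column. The only difference is how you get the two dominance statements: you use the triangle inequality through the zero column $i_0$ and zero row $\nu i_0$. The paper instead uses the Gorenstein identity, which gives the sharper equalities $d(i_0,k)=d(\nu k,k)$ and $d(k,\nu i_0)=d(k,\nu^{-1}k)$. Either suffices.

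However, the proposal stops before the verifications, and one necessary fact is missing. You must show $b_{\nu i_0}\geq 2$ whenever $\V_A\neq\emptyset$. Otherwise your ``largest $j$'' is $b_{\nu i_0}-1\leq 0$, and the candidate is not an element of $\V_A$, since only shifts $j\geq 1$ are allowed. This follows from your column dominance: $d(k,i)\leq d(k,\nu i_0)\leq d(\nu^2 i_0,\nu i_0)$ for all $k,i$, the second inequality because a $b$-invariant is the largest entry of its column. So if $b_{\nu i_0}\leq 1$, every truncation $(d(l,\cdot)-j)_+$ with $j\geq 1$ is zero and $\V_A=\emptyset$. This is exactly how the paper argues.

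The step you call the main obstacle is then immediate. By basicness, $d(k,\nu i_0)=d(k,\nu i_0)+d(\nu i_0,k)>0$ for $k\neq\nu i_0$, so your $0/1$-vector is the unit vector $e_{\nu i_0}$. Any element $(d(l,\cdot)-j)_+$ of $\V_A$ is nonzero, since the zero vector is the excluded ${\rm v}(e_{\nu i_0}A)$. By column dominance its largest coordinate sits at $\nu i_0$, so that coordinate is at least $1$ and the element is $\geq e_{\nu i_0}$. No transpose-duality reduction is needed.

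The two ``not a row'' checks, which the paper leaves implicit, are also short, using $d(l,\nu^{-1}l)=b_{\nu^{-1}l}$.
For the maximum, suppose $d(l,\cdot)=(d(i_0,\cdot)-1)_+$. Evaluating at $\nu^{-1}l$ gives $b_{\nu^{-1}l}=(d(i_0,\nu^{-1}l)-1)_+\leq(b_{\nu^{-1}l}-1)_+$, so $b_{\nu^{-1}l}=0$ and $l=\nu i_0$. Then the candidate maximum is $0$ and $\V_A=\emptyset$.
For the minimum, suppose $d(l,\cdot)=e_{\nu i_0}$. Then $l\neq\nu i_0$, so $b_{\nu^{-1}l}\geq 1$ by Lemma~\ref{lem:basic}~a). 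This forces $\nu^{-1}l=\nu i_0$ and $b_{\nu i_0}=1$, contradicting $b_{\nu i_0}\geq 2$.

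With these steps filled in, your argument is complete.
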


\begin{proof}
Denote the exponent matrix associated with the tiled order $A$ by ${\rm v}(A)=[d(i, j)]$. Assume that the poset $\V_A$ is nonempty and the $b$-invariant $d(\nu i, i)$ equals $0$, we prove that it has the maximum and the minimum. Since all entries in the $i$-th column of ${\rm v}(A)$ equal $0$, we have
\[
d(i, j)=d(\nu j, j)-d(\nu j, i)=d(\nu j, j)\geq d(k, j)
\]
for all $j$, $k\in \mathbb{I}$. This implies that the element ${\rm v}(e_i A(1)_{\geq 0})$ is the maximum of $\V_A$. Since all entries in the $\nu i$-th row of ${\rm v}(A)$ equal $0$, we have
\[
d(\nu^2 i, \nu i)\geq d(j, \nu i)=d(j, \nu^{-1}j)-d(\nu i, \nu^{-1}j)=d(j, \nu^{-1}j)
\]
for all $j\in \mathbb{I}$. This means that $d(\nu^2 i, \nu i)$ is the greatest $b$-invariant of $A$. If it is less than or equal to $1$, then so are the others. This contradicts to that $\V_A$ is nonempty. So we have $d(\nu^2 i, \nu i)\geq 2$. By Lemma~\ref{lem:basic}, the $b$-invariant $d(\nu j, j)$ is positive for all $j\neq i$. Then we have
\[
d(\nu^2 i, j)=d(\nu^2 i,\nu i)-d(j, \nu i)=d(\nu^2 i,\nu i)-d(j, \nu^{-1}j)<d(\nu^2 i, \nu i)
\]
for all $j\neq \nu i$. We deduce that we have ${\rm v}(e_{\nu^2 i}A(d(\nu^2 i, \nu i)-1)_{\geq 0})=e_{\nu i}$. On the other hand, since we have $d(j, \nu i)=d(j, \nu^{-1}j)\geq d(j, k)$ for all $j$, $k\in \mathbb{I}$, any nonzero element ${\rm v}(e_j A(l)_{\geq 0})\in \V_A$ is greater than or equal to $e_{\nu i}$. This means that ${\rm v}(e_{\nu^2 i}A(d(\nu^2 i, \nu i)-1)_{\geq 0})$ is the minimum of $\V_A$.
\end{proof}

\begin{proof}[Proof of Theorem \ref{thm:tilting object}]
After possibly replacing $A$ with a graded Morita equivalent \linebreak $\N$-graded Gorenstein tiled order, we may and will assume that it is basic.

a) If all $b$-invariants of $A$ are positive, then the statement follows from part~(1) of Theorem~8.2 of \cite{IyamaKimuraUeyama24}. We now assume that $A$ has a zero $b$-invariant. So we have
\[
V=\bigoplus_{i\in \mathbb{I}}\bigoplus_{j=1}^{-p_i}e_{\nu i}A(j)_{\geq 0}\: .
\]
Since $A$ is an $\N$-graded Gorenstein tiled order, its Gorenstein parameters are less than or equal to $1$. Then the statement follows from part~(1) of Theorem~5.1 of \cite{IyamaKimuraUeyama24} and Lemma~\ref{lem:tilting}.

b) This follows from the proof of part~(3) of Theorem~8.2 of \cite{IyamaKimuraUeyama24}.

c) If all $b$-invariants of $A$ are positive, then $\V_A$ has the minimum $0$. If $A$ has a zero $b$-invariant, by Lemma~\ref{lem:maximum and minimum}, the poset $\V_A$ is either empty, or has the minimum.
\end{proof}

\subsection{Realization of posets} \label{ss:realization of posets}

In this section, we prove that for a finite poset with maximum, its incidence algebra can be realized as the endomorphism algebra of the standard tilting object $V_A$ for an $\N$-graded Gorenstein tiled order $A$.

\begin{theorem} \label{thm:realization of posets}
Let $P$ be a finite poset with maximum. Then there exists a basic $\N$-graded Gorenstein tiled order $A$ with all $b$-invariants $1$ or $2$ such that the endomorphism algebra of the standard tilting object $V$ in $\ul{\CM \!}^{\Z} A$ is isomorphic to $kP$.
\end{theorem}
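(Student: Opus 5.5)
We use the construction announced in the introduction. Let $0$ denote the maximum of $P$ and put $\mathbb{I}=P\setminus\{0\}$ with the induced order. Let $d$ be the quasi-metric on $\mathbb{I}$ with $d(i,j)=0$ if $i\leq j$ and $d(i,j)=1$ otherwise. Let $A$ be the tiled order on $\mathbb{I}\sqcup\mathbb{I}=\{i^+\}\sqcup\{i^-\}$ with exponent matrix
\[
\begin{bmatrix}
d & \mathbf{1}-d^t \\
\mathbf{2}-d^t & d
\end{bmatrix},
\]
so that $d(i^+,j^-)=1-d(j,i)$ and $d(i^-,j^+)=2-d(j,i)$. (If $\mathbb{I}$ is empty, i.e.\ $P=\{0\}$, the recipe gives the empty matrix; in that case take $A=R$, for which $V=R$ and $\Ga_A=k$.)

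\textbf{Step 1: $A$ is a basic $\N$-graded Gorenstein tiled order.} All entries lie in $\{0,1,2\}$, so $A$ is $\N$-graded. Basicness follows because for $i\neq j$ in $\mathbb{I}$ at least one of $d(i,j)$, $d(j,i)$ equals $1$, while the mixed pairs give $d(i^+,j^-)+d(j^-,i^+)=3-2d(j,i)>0$. The triangle inequality $d(a,c)+d(c,b)\geq d(a,b)$ is checked by a case analysis over the eight sign patterns of $(a,c,b)$. Each case reduces to transitivity of $\leq$; for instance $(+,-,+)$ reads $(1-d(c,a))+(2-d(b,c))\geq d(a,b)$, which holds since the left side is at least $1$.

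For Gorenstein-ness, take $\nu i^+=i^-$ and $\nu i^-=i^+$. The $b$-invariants are then $d(i^-,i^+)=2$ and $d(i^+,i^-)=1$. Verifying $d(\nu a,c)+d(c,a)=d(\nu a,a)$ is again a two-line check per case. For $a=i^+$ and $c=j^+$ it reads $(2-d(j,i))+d(j,i)=2$. For $a=i^+$ and $c=j^-$ it reads $d(i,j)+(2-d(i,j))=2$, after reading indices carefully. The cases with $a=i^-$ work similarly and sum to $1$. This confirms the condition, with all $b$-invariants equal to $1$ or $2$ as the statement requires.

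\textbf{Step 2: compute $\V_A$.} All $b$-invariants are positive, so $V$ is tilting and $\Ga_A\simeq k\V_A\op$ by Theorem~\ref{thm:tilting object} (or by Theorem~8.2 of \cite{IyamaKimuraUeyama24} directly). Write $\V_A=\{{\rm v}(e_aA(j)_{\geq 0})\mid j\geq 1\}$, where ${\rm v}(e_aA(j)_{\geq 0})$ is the vector $c\mapsto\max(d(a,c)-j,0)$. Since entries are at most $2$, only $j=1$ matters, and $j=1$ gives nonzero vectors only for rows $a=i^-$. Every $j\geq 1$ also yields the zero vector, which is the minimum of $\V_A$. For $a=i^-$ and $j=1$ the vector is supported on $\{c^+\mid d(c,i)=0\}=\{c^+\mid c\leq i\}$, with value $1$ there. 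Hence
\[
\V_A=\{0\}\cup\{\chi_{\downarrow i}\mid i\in\mathbb{I}\},
\]
ordered componentwise. We have $\chi_{\downarrow i}\leq\chi_{\downarrow i'}$ if and only if $i\leq i'$, so $\V_A\cong\mathbb{I}\sqcup\{\text{minimum}\}$. Reversing the order gives $\V_A\op\cong\mathbb{I}\op\sqcup\{\text{maximum}\}$.

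\textbf{Step 3: fix orientation.} Step 2 yields a poset isomorphic to $P\op$ with its maximum adjoined, which is not $P$ in general. To correct this, apply the construction to $\mathbb{I}\op$ in place of $\mathbb{I}$, i.e.\ replace $d$ by $d^t$; equivalently, define $d(i,j)=0$ iff $i\geq j$. Then $\V_A\op\cong\mathbb{I}\sqcup\{0\}=P$.

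The main obstacle is the sign bookkeeping: the orientation in Step 3, the row/column conventions in ${\rm v}(e_aA(j)_{\geq 0})$, and the convention that $\Ga\simeq k\V_A\op$. I would pin these down first on the chain example, Example~\ref{ex:poset}, before running the case verifications of Step 1.
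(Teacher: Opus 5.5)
Your construction is the paper's block matrix $\begin{bsmallmatrix} d & \mathbf{1}-d^t \\ \mathbf{2}-d^t & d \end{bsmallmatrix}$, and Steps 1 and 2 are correct. The tiled-order, basicness and Gorenstein checks all go through. For mixed pairs the sum is $3-d(i,j)-d(j,i)$ rather than $3-2d(j,i)$, but it is still positive. Your formula ${\rm v}(e_{i^-}A(1)_{\geq 0})=\chi_{\{k^+\mid k\leq i\}}$ is what the paper uses when it compares the rows $e_i(\mathbf{2}-d^t)$.

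Your Step 3 is a correction the argument actually needs, not just caution. The paper keeps $d(i,j)=0$ iff $i\leq j$. It shows that the rows $e_i(\mathbf{2}-d^t)$ are ordered like $P\setminus\{0\}$, and then concludes $\Ga\simeq kP$. But Theorem~\ref{thm:tilting object}~b) gives $\Ga\simeq k\V_A\op$, and with that choice $\V_A\op$ is $(P\setminus\{0\})\op$ with a maximum adjoined.

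This discrepancy does not depend on conventions. Take $P=\{a,b<c<0\}$. The literal construction gives $\V_A=\{0,\chi_{\downarrow a},\chi_{\downarrow b},\chi_{\downarrow c}\}$ with $\downarrow c=\{a,b,c\}$, which is a diamond. The diamond is self-dual and its incidence algebra has global dimension $2$. On the other hand, $kP$ is the path algebra of a tree, hence hereditary, so the two algebras are not isomorphic.

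With $d$ replaced by $d^t$, the nonzero elements of $\V_A$ become $\chi_{\uparrow i}$, ordered like $(P\setminus\{0\})\op$. Hence $\V_A\op\cong P$, as you claim.

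The one real error in your proposal is the degenerate case $P=\{0\}$.
\begin{itemize}
\item For $R=k[x]$ the only $b$-invariant is $d(1,1)=0$, which the statement forbids.
\item Moreover $R$ is projective, so $V=0$ in $\ul{\CM \!}^{\Z} R$ and $\Ga_R=0$, not $k$. This is why Corollary~\ref{cor:realization of quivers} uses $k[x]$ for the empty quiver.
\end{itemize}
Use instead the tiled order with exponent matrix $\begin{bsmallmatrix} 0 & 1 \\ 1 & 0 \end{bsmallmatrix}$. It is basic and Gorenstein, with $\nu$ the transposition and both $b$-invariants equal to $1$. Its poset is $\V_A=\{0\}$, so $\Ga_A\simeq k$ (compare Theorem~\ref{thm:classification 1}~b)). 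The paper's block construction is also vacuous when $P\setminus\{0\}$ is empty, so this case has to be added separately in either version.
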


\begin{corollary} \label{cor:realization of quivers}
Let $Q$ be a finite quiver. Then the following are equivalent.
\begin{itemize}
\item[i)] Either $Q$ vanishes, or the underlying graph of $Q$ is a tree and $Q$ has a unique sink.
\item[ii)] There exists an $\N$-graded Gorenstein tiled order $A$ such that the endomorphism algebra of the standard tilting object $V$ in $\ul{\CM \!}^{\Z} A$ is isomorphic to $kQ$.
\end{itemize}
\end{corollary}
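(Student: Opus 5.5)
We prove i)$\Rightarrow$ii) using Theorem~\ref{thm:realization of posets}, and ii)$\Rightarrow$i) using Theorem~\ref{thm:tilting object}. The key observation is that a path algebra $kQ$ of a finite quiver is an incidence algebra $kP$ of a finite poset exactly when $Q$ has no parallel paths, i.e.\ for any two vertices there is at most one path between them. For connected $Q$ this holds if and only if the underlying graph of $Q$ is a tree. In that case $P$ is the set of vertices of $Q$ ordered by the existence of paths, and the Hasse quiver of $P$ is $Q$ itself.

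For i)$\Rightarrow$ii), the empty quiver corresponds to $\Ga_A=0$. We may take for instance a basic Gorenstein tiled order all of whose $b$-invariants are $1$ (say $A=R$): then $-p_i=0$ and $\V_A$ is empty, or $V$ is zero in the stable category. Now suppose the underlying graph of $Q$ is a tree with a unique sink. Then $kQ\simeq kP$, where $P$ is the reachability poset of $Q$ with the appropriate orientation convention. A tree is finite and acyclic, so every vertex has a path to some sink. Since the sink is unique, every vertex has a path to it, so $P$ has a maximum (or a minimum, depending on the convention). We match the convention to that of Theorem~\ref{thm:realization of posets}, passing to $P\op$ if needed; this amounts to fixing the convention for $kP$ so that sinks of $Q$ correspond to maximal elements. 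Theorem~\ref{thm:realization of posets} then gives the required $A$.

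For ii)$\Rightarrow$i), suppose $\End(V)\simeq kQ$. By Theorem~\ref{thm:tilting object}~b) we also have $\End(V)\simeq k\V_A\op$. If $\V_A$ is empty, then $Q$ is empty. Otherwise, by Theorem~\ref{thm:tilting object}~c), $\V_A$ has a minimum, so $\V_A\op$ has a maximum; in particular the poset is connected and $kQ$ is ring-indecomposable. Since $kQ$ is finite dimensional, $Q$ is acyclic. The basic algebra $kQ$ determines $Q$, and $k\V_A\op$ is isomorphic to the bound quiver algebra of the Hasse quiver of $\V_A\op$ with commutativity relations. Since $kQ$ is hereditary, Gabriel's theorem identifies $Q$ with the Ext-quiver of $kP$, which is the Hasse quiver of $P=\V_A\op$. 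Moreover the relations must be trivial, so comparing dimensions gives $\dim e_x kQ e_y=\dim e_x kP e_y\le 1$ for all vertices $x,y$. Hence $Q$ has at most one path between any two vertices. A connected acyclic quiver with this property has a tree as underlying graph: a cycle in the underlying graph would contain two distinct paths between some pair of vertices, since its orientation splits into maximal directed segments. This last point needs care, and it is the \emph{main obstacle}.

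The worry is that an unoriented cycle need not produce parallel paths. For example, the quiver $a\to b\to c$, $a\to c$ has parallel paths, but a zigzag cycle such as $a\to b\leftarrow c\to d\leftarrow a$ has none. However, such a zigzag cannot be the Hasse quiver of a poset $P$ for which $kP$ is hereditary. Indeed, $kP$ is hereditary precisely when it equals the path algebra of its Hasse quiver, which forces the Hasse quiver to have no parallel paths, and conversely. Also, $\dim e_xkPe_y$ must equal the number of paths from $x$ to $y$. So the remaining task is to use the existence of a maximum: given an unoriented cycle, follow paths from each vertex up to the maximum, and produce two distinct paths in $Q$ with the same endpoints. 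This contradicts $\dim\le 1$. Finally, the unique maximum of $P$ is the unique sink of $Q$ (again under the matched convention), which completes i).
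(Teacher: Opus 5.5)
Your proof follows the paper's: $A=k[x]$ for the empty quiver, Theorem~\ref{thm:realization of posets} for a tree with a unique sink, and for the converse Theorem~\ref{thm:tilting object}~b), c) to get $kQ\simeq kP$ with $P=\V_A\op$ having a maximum $m$. After that one needs that a hereditary incidence algebra of a poset with maximum has a tree as Hasse quiver. The paper merely asserts this. You correctly isolate it as the crux but leave it as a ``remaining task''; it closes quickly as follows.

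Every $x\neq m$ satisfies $x<m$, so it has an outgoing arrow. If it had two, $x\to y_1$ and $x\to y_2$, then extending each by a path from $y_i$ to $m$ gives two distinct paths from $x$ to $m$. This contradicts the ``at most one path'' property you derived from $kQ\simeq kP$. Hence every vertex other than $m$ has out-degree exactly one. The Hasse quiver is therefore connected with $|P|-1$ arrows, i.e.\ a tree, and $m$ is its unique sink. In your cycle language: an unoriented cycle in an acyclic quiver has a vertex at which both cycle arrows start. That is the vertex to which ``follow paths up to the maximum'' must be applied; without choosing it, no parallel pair is produced.

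Two side claims in your text are false and should be removed.

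First, the crown poset $a,c<b,d$ has Hasse quiver exactly your zigzag $a\to b\leftarrow c\to d\leftarrow a$. Its incidence algebra is the hereditary path algebra of that $\tilde{A}_3$ quiver. So neither ``connected with no parallel paths $\Leftrightarrow$ tree'' nor ``a zigzag cannot be the Hasse quiver of a poset with $kP$ hereditary'' holds; only the maximum excludes this case. For i)$\Rightarrow$ii) you only need ``tree $\Rightarrow$ no parallel paths''.

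Second, in the empty case, an order with all $b$-invariants equal to $1$ has $p_i=0$, $V=[R\ \cdots\ R]$ and $\V_A=\{0\}$. Hence $\Ga\simeq kA_1\neq 0$, cf.\ Theorem~\ref{thm:classification 1}~b). The order $A=R$ instead has $b$-invariant $d(1,1)=0$ and $p_1=1$. It works because $R$ is hereditary, so $V=R$ is zero in $\ul{\CM \!}^{\Z} R$; this is the paper's choice.
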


\begin{proof}
Let us show that i) implies ii): If $Q$ vanishes, then we let $A$ be $k[x]$. If the underlying graph of $Q$ is a tree and $Q$ has a unique sink, then the statement follows from Theorem~\ref{thm:realization of posets}.

We now show that ii) implies i): If $Q$ does not vanish, by parts~b) and c) of Theorem~\ref{thm:tilting object}, there is a finite poset $P$ with maximum such that $kP$ is isomorphic to $kQ$. It follows that $kP$ is hereditary and hence the underlying graph of the Hasse quiver of $P$ is a tree.
\end{proof}

\begin{proof}[Proof of Theorem~\ref{thm:realization of posets}]
Denote the maximum of $P$ by $0$. We extend the finite poset $P\setminus \{ 0\}$ to a totally ordered set $T$. We define the matrix $d=[d(i, j)]_{i, j\in T}$ as $d(i, j)=0$ if $i \leq j$ in $P\setminus \{ 0\}$, otherwise $d(i, j)=1$. We claim that the matrix
\[
{\rm v}(A)=
\begin{bmatrix}
d & \mathbf{1}-d^t \\
\mathbf{2}-d^t & d
\end{bmatrix}
\]
gives rise to a tiled order $A$ which satisfies the required conditions, where $\mathbf{1}$ denotes the matrix with all entries $1$.

We first check that $A$ is an $R$-algebra. For any $i$, $j$, $k\in T$, if $d(i, k)$ or $d(k, j)$ equals $1$, then we have $d(i, k)+d(k, j)\geq 1 \geq d(i, j)$. If $d(i, k)$ and $d(k, j)$ equal $0$, then we have $i\leq k$ and $k\leq j$ in $P\setminus \{ 0\}$. So we have $i\leq j$ and hence $d(i, j)$ equals $0$. Therefore, we have $d(i, k)+d(k, j)\geq d(i, j)$ for all $i$, $j$, $k\in T$. Using this and that all entries of $\mathbf{2}-d^t$ is greater than or equal to $1$ we deduce that the triangle inequality holds for the matrix ${\rm v}(A)$.

By definition, we have $d(i, i)=0$ and $d(i, j)=1$ if $i>j$ in $T$. It follows that the difference of each two rows of $d$ is not a constant vector and hence the same holds for the matrix ${\rm v}(A)$. This means that $A$ is basic. By construction, the entries in the diagonal of the blocks $\mathbf{1}-d^t$ and $\mathbf{2}-d^t$ are the $b$-invariants of $A$. So $A$ is a Gorenstein order with all $b$-invariants $1$ or $2$. For any $i$, $j\in T$, we have $e_i(\mathbf{2}-d^t)\geq e_j(\mathbf{2}-d^t)$ if and only if $d(k, i)\leq d(k, j)$ for all $k\in T$ if and only if $i\geq j$ in $P\setminus \{ 0\}$. By part~(3) of Theorem~8.2 of \cite{IyamaKimuraUeyama24}, the endomorphism algebra of $V$ in $\ul{\CM \!}^{\Z} A$ is isomorphic to $kP$. This concludes the proof.
\end{proof}

The following example serves to illustrate the construction of tiled orders $A$.

\begin{example}
Let $Q$ be a quiver of the following shape.
\[
\begin{tikzcd}
\bullet \arrow{dr} & \bullet \arrow{d} & \bullet \arrow{dl} \\[-10pt]
\vdots & 0 & \bullet \arrow{l} \\
\bullet \arrow{ur} & \bullet \arrow{u} & \bullet \arrow{ul}
\end{tikzcd}
\]
Then $Q\setminus \{ 0\}$ has no arrows. By definition, we have $d=\mathbf{1}-I$. Therefore, the matrix
\[
{\rm v}(A)=
\begin{bsmallmatrix}
0 & 1 & \cdots & 1 & 1 & 1 & 0 & \cdots & 0 & 0 \\
1 & 0 & \cdots & 1 & 1 & 0 & 1 & \cdots & 0 & 0 \\[-5pt]
\vdots & \vdots & \ddots & \vdots & \vdots & \vdots & \vdots & \ddots & \vdots & \vdots \\
1 & 1 & \cdots & 0 & 1 & 0 & 0 & \cdots & 1 & 0 \\
1 & 1 & \cdots & 1 & 0 & 0 & 0 & \cdots & 0 & 1 \\
2 & 1 & \cdots & 1 & 1 & 0 & 1 & \cdots & 1 & 1 \\
1 & 2 & \cdots & 1 & 1 & 1 & 0 & \cdots & 1 & 1 \\[-5pt]
\vdots & \vdots & \ddots & \vdots & \vdots & \vdots & \vdots & \ddots & \vdots & \vdots \\
1 & 1 & \cdots & 2 & 1 & 1 & 1 & \cdots & 0 & 1 \\
1 & 1 & \cdots & 1 & 2 & 1 & 1 & \cdots & 1 & 0
\end{bsmallmatrix}
\]
gives rise to a tiled order $A$ such that the endomorphism algebra of the standard tilting object $V$ in $\ul{\CM \!}^{\Z} A$ is isomorphic to $kQ$.
\end{example}

\section{Classification of Gorenstein tiled orders}

In Theorem~\ref{thm:realization of posets}, we have realized all possible incidence algebras of finite posets as the endomorphism algebra $\Ga$ of the standard tilting object in $\ul{\CM \!}^{\Z} A$ for a basic $\N$-graded Gorenstein tiled order $A$. A natural question is to classify the basic $\N$-graded Gorenstein tiled orders such that the endomorphism algebra $\Ga$ is isomorphic to $kP$ for a given finite poset $P$. We give the answer for suitable finite posets $P$.

\subsection{Type $A_n$ with $n\leq 2$}

\begin{theorem} \label{thm:classification 1}
Let $A$ be a basic $\N$-graded Gorenstein tiled order.
\begin{itemize}
\item[a)] The algebra $\Ga$ vanishes if and only if $A$ is hereditary if and only if $A$ is conjugate to the tiled order determined by the first exponent matrix below of arbitrary size.
\item[b)] The algebra $\Ga$ is isomorphic to $kA_1$ if and only if $A$ is conjugate to the tiled order determined by the second exponent matrix below of even size.
\item[c)] The algebra $\Ga$ is isomorphic to $kA_2$ if and only if $A$ is conjugate to the tiled order determined by the third exponent matrix below of size greater than or equal to $2$.
\end{itemize}
\[
\begin{bsmallmatrix}
0 & 0 & \cdots & 0 & 0 \\
1 & 0 & \cdots & 0 & 0 \\[-5pt]
\vdots & \vdots & \ddots & \vdots & \vdots \\
1 & 1 & \cdots & 0 & 0 \\
1 & 1 & \cdots & 1 & 0
\end{bsmallmatrix}
\quad
\begin{bsmallmatrix}
0 & 1 & 0 & 0 & \cdots & 0 & 0 \\
1 & 0 & 0 & 0 & \cdots & 0 & 0 \\
1 & 1 & 0 & 1 & \cdots & 0 & 0 \\
1 & 1 & 1 & 0 & \cdots & 0 & 0 \\[-5pt]
\vdots & \vdots & \vdots & \vdots & \ddots & \vdots & \vdots \\
1 & 1 & 1 & 1 & \cdots & 0 & 1 \\
1 & 1 & 1 & 1 & \cdots & 1 & 0
\end{bsmallmatrix}
\quad
\begin{bsmallmatrix}
0 & 1 & 1 & \cdots & 1 & 2 \\
1 & 0 & 1 & \cdots & 1 & 1 \\
0 & 1 & 0 & \cdots & 1 & 1 \\[-5pt]
\vdots & \vdots & \vdots & \ddots & \vdots & \vdots \\
0 & 0 & 0 & \cdots & 0 & 1 \\
0 & 0 & 0 & \cdots & 1 & 0
\end{bsmallmatrix}
\]
\end{theorem}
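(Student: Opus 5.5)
The strategy is to work entirely with the exponent matrix ${\rm v}(A)=[d(i,j)]$ and the Nakayama permutation $\nu$. We use the description of $\V_A$ from Section~\ref{ss:Gorenstein tiled orders} together with Theorem~\ref{thm:tilting object}, which gives $\Ga\simeq k\V_A\op$. Since an incidence algebra $kP$ determines $P$, the three statements amount to characterizing when $|\V_A|$ equals $0$, $1$ or $2$ (for $2$ elements the poset must moreover be a chain). Proposition~\ref{prop:conjugacy} controls how $\V_A$ changes under elementary conjugations. So the plan is to first normalize $A$ up to conjugacy, then read off $\V_A$ from the $b$-invariants $d(\nu i,i)$.

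\textbf{Step 1: counting $\V_A$.} The elements of $\V_A$ are the vectors ${\rm v}(e_iA(j)_{\geq 0})=(\max(d(i,l)-j,0))_l$ for $j\geq 1$, with the vectors ${\rm v}(e_iA)$ removed. The largest entry in row $\nu i$ is the $b$-invariant $b_i=d(\nu i,i)$, so row $\nu i$ contributes nonzero vectors exactly for $1\leq j\leq b_i-1$. If some $b$-invariant is $0$, the zero vector is excluded. Hence $|\V_A|$ is governed by $\sum_i \max(b_i-1,0)$, corrected for coincidences between vectors coming from different rows and for the vector ${\rm v}(e_iA)$ that gets removed. By Lemma~\ref{lem:basic}, at most one $b_i$ vanishes.

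\textbf{Step 2: the case $\Ga=0$.} Here every $b_i\leq 1$, and if some $b_i=0$ the remaining structure is forced. We then conjugate to make the entries $0/1$ and show that $d$ is a total-order quasi-metric, giving the first matrix. Hereditarity of $A$ is equivalent to $\V_A=\emptyset$, using the known fact that hereditary tiled orders are conjugate to the staircase matrix; this also follows from the computation itself.

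\textbf{Steps 3 and 4: $kA_1$ and $kA_2$.} For $\V_A$ with one or two elements, the count forces a single $b$-invariant equal to $2$ (or a bounded configuration with one zero $b$-invariant, where Lemma~\ref{lem:maximum and minimum} applies). We use Proposition~\ref{prop:conjugacy} to move between the case with all $b_i\geq 1$ and the case with a zero $b$-invariant, since conjugation swaps a maximum of $\V_A$ for ${\rm v}(e_iA)$. Then, using the Gorenstein identity $d(\nu i,j)+d(j,i)=d(\nu i,i)$ with all remaining $b$-invariants equal to $1$, we pin down row $\nu i$ and column $i$. The complement is a tiled order with $0/1$ entries whose structure is rigidly determined by $\nu$. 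Conversely, for each listed matrix we compute $\nu$ and the $b$-invariants directly to verify the claimed $\V_A$. The parity condition in b) should come from $\nu$ pairing indices: the blocks $\begin{bsmallmatrix}0&1\\1&0\end{bsmallmatrix}$ force an involutive structure.

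The main obstacle is the uniqueness part of Steps 3 and 4: showing that every quasi-metric with the given $b$-invariant profile is conjugate to the listed normal form. I would attack it by induction on the size $n$. Remove a pair $\{i,\nu i\}$ (or an index with $b_i=1$ lying in a trivial position), check that the resulting smaller matrix is still Gorenstein with the same $\V$ up to the controlled change, and then reinsert the index, using the triangle inequalities and the Gorenstein identity to show that the new row and column are forced up to conjugation.
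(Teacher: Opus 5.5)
\textbf{The uniqueness direction is not proved.} Your setup matches the paper: you read $\V_A$ off the $b$-invariants and normalize with Proposition~\ref{prop:conjugacy} so that all $b$-invariants are positive. But the uniqueness direction is the substance of the theorem, and the induction you propose for it breaks down. Deleting a pair $\{i,\nu i\}$ is only guaranteed to give a Gorenstein corner order when the deleted set is $\nu$-stable. In c), $\nu$ turns out to be an $n$-cycle, and there deletion can destroy Gorensteinness. Take the third matrix with $n=5$, where $\nu i=i+1$. Deleting $\{2,3\}$ leaves $\begin{bsmallmatrix}0&1&2\\0&0&1\\0&1&0\end{bsmallmatrix}$. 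Its first column is zero but no row is zero, which is impossible for an $\N$-graded Gorenstein tiled order, since each $b$-invariant is the largest entry of its row and of its column. The other four pairs $\{i,\nu i\}$ fail as well, so there is nothing to induct on.

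\textbf{Your count is also off.} After normalizing, $|\V_A|=1$ forces \emph{every} $b$-invariant to equal $1$, so no entry $2$ occurs at all. For $|\V_A|=2$ you still have to exclude two $b$-invariants equal to $2$. The paper does this with Lemma~\ref{lem:basic}\,b): the two nonzero vectors ${\rm v}(e_{\nu i}A(1)_{\geq 0})$ and ${\rm v}(e_{\nu j}A(1)_{\geq 0})$ would have to coincide. In a), the claim that all $b_i\leq 1$ and that one of them vanishes also needs an argument.

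\textbf{Missing idea for a) and c): transitivity of $\nu$.} Suppose you know the $b$-invariants are one $0$ (respectively one $2$) and all others $1$. Apply Lemma~\ref{lem:gluing} to the decomposition of ${\rm v}(A)$ into Nakayama orbits: the average of the $b$-invariants is the same on every orbit. The orbit containing the exceptional value has average different from $1$, so $\nu$ must be transitive. Only then can you conjugate to $\nu i=i+1$ with the exceptional entry at $(1,n)$. From there you get $d(1,i)=d(i,n)=0$ (respectively $1$) for the remaining $i$, and fill in every entry from $d(i+1,j)+d(j,i)=d(i+1,i)$. Without transitivity, your assertion that the matrix is \emph{rigidly determined by $\nu$} has no basis.

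\textbf{Missing idea for b): orbits have size exactly $2$.} In b) the structure is genuinely not rigid. The key input is Lemma~\ref{lem:cyclic Gorenstein tiled orders}: a cyclic Gorenstein tiled order with all $b$-invariants $1$ has size exactly $2$. Size $1$ is impossible, since then $b=d(i,i)=0$. Sizes $3$, $4$, $5$ and $\geq 6$ are ruled out by an explicit case analysis using the Gorenstein identities and basicness. This is where the even size comes from; your remark about an \emph{involutive structure} merely restates the conclusion. After that you still need two more steps. Between two orbits, one off-diagonal block must be constantly $\mathbf{0}$ and the other constantly $\mathbf{1}$. Then a sorting argument by permutation conjugation brings the matrix to the staircase of $2\times 2$ blocks. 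Only at that point does an induction by deleting whole orbits, as you suggest, become legitimate.
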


To prove Theorem~\ref{thm:classification 1}, we need the following lemmas.

\begin{lemma} \label{lem:cyclic Gorenstein tiled order}
Let $A$ be a basic Gorenstein tiled order. Then $A$ is isomorphic to a tiled order whose associated exponent matrix can be divided into blocks such that the diagonal blocks are the associated exponent matrices of cyclic Gorenstein tiled orders.
\end{lemma}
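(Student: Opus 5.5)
The plan is to decompose the index set $\mathbb{I}$ into the orbits $\mathbb{I}_1, \ldots, \mathbb{I}_r$ of the Nakayama permutation $\nu$. We then order the indices so that each orbit forms a consecutive block. Reordering the indices of a tiled order, that is, conjugating by a permutation matrix, yields an isomorphic tiled order. Its exponent matrix is $[d(\sigma i,\sigma j)]$ and its Nakayama permutation is transported accordingly. So it suffices to show that for each orbit $\mathbb{I}_s$, the diagonal block $[d(i,j)]_{i,j\in \mathbb{I}_s}$ is the exponent matrix of a cyclic Gorenstein tiled order.

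For each block we first verify the tiled order axioms. The restriction of a quasi-metric to a subset is again a quasi-metric: $d(i,i)=0$ and the triangle inequality both restrict. The restriction also stays basic, since $d(i,j)+d(j,i)>0$ for $i\neq j$ is inherited. Hence $A_s=[Rx^{d(i,j)}]_{i,j\in \mathbb{I}_s}$, which equals $e_sAe_s$ with $e_s=\sum_{i\in\mathbb{I}_s}e_i$, is a basic tiled order.

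Next we check the Gorenstein property. Since $\mathbb{I}_s$ is $\nu$-stable, $\nu$ restricts to a bijection $\nu_s$ of $\mathbb{I}_s$. The criterion from Section~\ref{ss:Gorenstein tiled orders} requires $d(\nu i,j)+d(j,i)=d(\nu i,i)$ for all $i,j\in\mathbb{I}$. This holds in particular for all $i,j\in\mathbb{I}_s$, and $\nu i\in\mathbb{I}_s$ whenever $i\in\mathbb{I}_s$. Thus $A_s$ is a basic Gorenstein tiled order with Nakayama permutation $\nu_s$. By construction $\nu_s$ acts transitively on $\mathbb{I}_s$, so $A_s$ is a cyclic Gorenstein order.

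No real obstacle is expected. The only point needing care is the bookkeeping under reindexing: conjugating by the permutation matrix must carry the criterion $d(\nu i,j)+d(j,i)=d(\nu i,i)$ to the same criterion with the conjugated Nakayama permutation. This is immediate, because the criterion is stated intrinsically in terms of the index set. One should also make sure the resulting $A_s$ is regarded with its own index set, so that \emph{cyclic} refers to $\nu_s$ and not to $\nu$.
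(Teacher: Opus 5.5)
Your proposal is correct and follows the paper's proof: conjugate by a permutation matrix so that the $\nu$-orbits become consecutive index blocks, and observe that each diagonal block is a basic Gorenstein tiled order whose Nakayama permutation is the transitive restriction of $\nu$. You also write out the checks the paper leaves implicit: that the restricted quasi-metric stays basic, and that the Gorenstein identity $d(\nu i,j)+d(j,i)=d(\nu i,i)$ restricts to a $\nu$-stable subset.
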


\begin{proof}
Let the cardinalities of the orbits of $\mathbb{I}$ under the Nakayama permutation be \linebreak $n_1$, \ldots, $n_j$. After possibly conjugating $A$ by a permutation matrix, we may and will assume that the orbits of $\mathbb{I}$ under the Nakayama permutation are
\[
\{1, \ldots, n_1\} \ko \ldots \ko \{\sum_{i=1}^{j-1}n_i+1, \ldots, \sum_{i=1}^j n_i\} \: .
\]
Then the exponent matrix associated with $A$ satisfies the required condition.
\end{proof}

\begin{lemma} \label{lem:gluing}
Let $A$ be a basic Gorenstein tiled order. Suppose that the matrix ${\rm v}(A)$ is a block matrix of blockwise size $2$ such that the two diagonal blocks are the exponent matrices associated with Gorenstein tiled orders $A_1$ and $A_2$ with $b$-invariants $b_i$ respectively $b'_j$. Then we have $\frac{1}{|\mathbb{I}_{A_1}|}\sum_{i\in \mathbb{I}_{A_1}} b_i=\frac{1}{|\mathbb{I}_{A_2}|}\sum_{j\in \mathbb{I}_{A_2}}b'_j$.
\end{lemma}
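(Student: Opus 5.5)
We will compare the two diagonal blocks through the Gorenstein identity applied to entries of the off-diagonal blocks. Write ${\rm v}(A)=[d(i,j)]$ with index set $\mathbb{I}=\mathbb{I}_1\sqcup \mathbb{I}_2$, where $\mathbb{I}_1=\mathbb{I}_{A_1}$ and $\mathbb{I}_2=\mathbb{I}_{A_2}$. The hypothesis that the diagonal blocks are themselves Gorenstein tiled orders, together with the block form, means that the Nakayama permutation $\nu$ of $A$ preserves $\mathbb{I}_1$ and $\mathbb{I}_2$. Moreover, $\nu$ restricts to the Nakayama permutations of $A_1$ and $A_2$, so that $b_i=d(\nu i,i)$ for $i\in \mathbb{I}_1$ and $b'_j=d(\nu j,j)$ for $j\in\mathbb{I}_2$. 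I would first verify this via the characterization $d(\nu i,j)+d(j,i)=d(\nu i,i)$: if $\nu i$ landed in the other block, restricting the identity to $j$ in the block of $i$ would exhibit a row of $A_1$ (or $A_2$) with the Gorenstein property for a different column. I expect this to be the delicate point. If it cannot be derived in general, I would take it as part of the intended meaning of the hypothesis, as in Lemma~\ref{lem:cyclic Gorenstein tiled order}, where the blocks are unions of $\nu$-orbits.

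The key identity comes from applying the Gorenstein relation twice. For $i\in\mathbb{I}_1$ and $j\in\mathbb{I}_2$ we have
\[
d(\nu i,j)+d(j,i)=b_i, \qquad d(\nu j,i)+d(i,j)=b'_j .
\]
Summing the first relation over all $(i,j)\in \mathbb{I}_1\times\mathbb{I}_2$ gives
\[
|\mathbb{I}_2|\sum_{i} b_i=\sum_{i,j} d(\nu i,j)+\sum_{i,j} d(j,i).
\]
Since $\nu$ permutes $\mathbb{I}_1$, the first sum equals $\sum_{i,j} d(i,j)$, the total of the upper-right block. The second sum is the total of the lower-left block. Summing the second relation in the same way yields
\[
|\mathbb{I}_1|\sum_j b'_j=\sum_{i,j} d(j,i)+\sum_{i,j} d(i,j),
\]
because $\nu$ permutes $\mathbb{I}_2$.

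The two right-hand sides are identical, so
\[
|\mathbb{I}_2|\sum_i b_i=|\mathbb{I}_1|\sum_j b'_j .
\]
Dividing by $|\mathbb{I}_1||\mathbb{I}_2|$ gives the claim. The main obstacle is thus only the first step: ensuring that $\nu$ preserves the block decomposition and restricts to the Nakayama permutations of $A_1$ and $A_2$. Once that is in place, the reindexing by $\nu$ inside each block is what makes the two double sums coincide, and the rest is bookkeeping.
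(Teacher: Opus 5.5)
Your double-counting argument is correct, and it takes a different route from the paper. The paper proves the lemma in one line by citing Proposition~4.4 of \cite{IyamaKimuraUeyama24}, a result from the general theory of Artin--Schelter Gorenstein algebras of dimension one, applied to the Gorenstein parameters $p_i=1-b_i$. You instead work directly with the exponent matrix. Summing $d(\nu i,j)+d(j,i)=b_i$ and $d(\nu j,i)+d(i,j)=b'_j$ over $\mathbb{I}_1\times\mathbb{I}_2$, and reindexing by $\nu$ inside each block, shows that $|\mathbb{I}_2|\sum_i b_i$ and $|\mathbb{I}_1|\sum_j b'_j$ both equal the sum of all entries of the two off-diagonal blocks. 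This argument is self-contained and uses only the Gorenstein criterion of Section~\ref{ss:Gorenstein tiled orders}. It also identifies the common value explicitly, and the integer entries $d(i,j)$ across the two blocks are visibly what ties $A_1$ to $A_2$. The cited result buys generality beyond tiled orders, but it hides this simple mechanism.

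On the point you flagged: block-stability of $\nu$ cannot be derived from the stated hypotheses. Your sketched derivation gives no contradiction. If $\nu i$ lies in the other block, the identity $d(\nu i,j)+d(j,i)=d(\nu i,i)$ involves a row of an off-diagonal block, not a row of $A_1$.

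In fact the lemma is false without block-stability. Take $A(1,1,1)$, i.e.\ ${\rm v}(A)=\begin{bsmallmatrix}0&1&2\\2&0&1\\1&2&0\end{bsmallmatrix}$, split as $\{1,2\}\sqcup\{3\}$. Both diagonal blocks are Gorenstein tiled orders: any basic $2\times 2$ tiled order is Gorenstein with $\nu$ the transposition, and $[0]$ gives $R$. But their average $b$-invariants are $\frac{3}{2}$ and $0$.

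So the hypothesis must be read as saying that $\mathbb{I}_1$ and $\mathbb{I}_2$ are unions of $\nu$-orbits. This is exactly the situation produced by Lemma~\ref{lem:cyclic Gorenstein tiled order} wherever the lemma is applied. Under that reading, $\nu|_{\mathbb{I}_1}$ and $\nu|_{\mathbb{I}_2}$ are the Nakayama permutations of $A_1$ and $A_2$. This holds because the Nakayama permutation of a basic tiled order is unique: two admissible rows would differ by a constant vector, forcing $d(k,k')+d(k',k)=0$ and contradicting basicness. Hence $b_i=d(\nu i,i)$ and $b'_j=d(\nu j,j)$, as you need. Drop the attempted derivation and state block-stability as part of the hypothesis; with that, your proof is complete.
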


\begin{proof}
The statement follows from Proposition~4.4 of \cite{IyamaKimuraUeyama24}.
\end{proof}

For any basic $\N$-graded Gorenstein tiled order $A$ which has a zero $b$-invariant $d(\nu i, i)$ and the poset $\V_A$ is nonempty, by the proof of Lemma~\ref{lem:maximum and minimum}, the element ${\rm v}(e_i A(1)_{\geq 0})$ is the maximum in $\V_A$ and we have $d(i, \nu^{-1}i)\geq 2$. So if we replace $A$ by $DAD^{-1}$, where $D$ is the diagonal matrix with the $i$-th diagonal entry $x^{-1}$ and the others $1$, then all $b$-invariants become positive and $\V_{DAD^{-1}}$ is obtained from $\V_A$ by removing the maximum and adding the new minimum $0$, \cf~Proposition~\ref{prop:conjugacy}. In particular, the poset does not change if it is totally ordered. Therefore, to classify basic $\N$-graded Gorenstein tiled orders up to conjugacies such that the endomorphism algebra $\Ga$ of the standard tilting object is isomorphic to the incidence algebra of a given nonempty totally ordered set, without loss of generality, we may assume that all $b$-invariants of $A$ are positive.

\begin{lemma} \label{lem:cyclic Gorenstein tiled orders}
Let $A$ be a basic $\N$-graded cyclic Gorenstein tiled order such that the stable category $\ul{\CM \!}^{\Z} A$ is triangle equivalent to $\per kA_1$. Then $A$ is conjugate to a tiled order determined by the exponent matrix
$\begin{bsmallmatrix}
0 & 1 \\
1 & 0
\end{bsmallmatrix}$.
\end{lemma}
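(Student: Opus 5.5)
We reduce to the case where all $b$-invariants are positive, and then determine $A$ by counting the summands of $V$. By the discussion preceding the lemma, after conjugating we may assume all $b$-invariants $b_i=d(\nu i,i)$ are positive: a zero $b$-invariant can be removed by conjugation without changing a totally ordered poset, and conjugation preserves cyclicity. Then $\V_A$ contains the element $0={\rm v}([R\ \cdots\ R])$. Moreover, the elements ${\rm v}(e_{\nu i}A(j)_{\geq 0})$ for $1\leq j\leq -p_i=b_i-1$ are pairwise distinct, because $A$ is basic and the summands of $V$ are pairwise non-isomorphic.

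Since $\ul{\CM \!}^{\Z} A\simeq \per kA_1$ and $V$ is tilting by Theorem~\ref{thm:tilting object}, the number of indecomposable summands of $V$ equals the rank of the Grothendieck group, which is $1$. Hence
\[
1=|\V_A|=1+\sum_{i\in\mathbb{I}}(b_i-1),
\]
so $b_i=1$ for all $i$. Thus every $b$-invariant equals $1$. Because $A$ is $\N$-graded, each $b$-invariant is the largest entry in its row and column, so all entries of ${\rm v}(A)$ lie in $\{0,1\}$.

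Next we use cyclicity. Write $\nu$ as an $n$-cycle and conjugate by a permutation matrix so that $\nu i=i+1$ modulo $n$. The Gorenstein condition becomes
\[
d(i+1,j)+d(j,i)=1 \quad\text{for all } i,j.
\]
For $n=1$, the order is $A=R$ and $b=d(1,1)=0$, which contradicts positivity; alternatively $\Ga=0$ in that case. So $n\geq 2$. Taking $j=i+1$ gives $d(i+1,i)=1$. For $j\notin\{i,i+1\}$, exactly one of $d(i+1,j)$ and $d(j,i)$ equals $1$.

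The main step, and the main obstacle, is to show $n=2$. For $n\geq 3$ we aim to find a pair $i\neq j$ with $d(i,j)=d(j,i)=0$, contradicting basicness, or else a violation of the triangle inequality. Define $f(i,j)=d(i,j)$. The relation $d(i+1,j)=1-d(j,i)$ means each row is determined by the complement of the previous column. I would first try to show that the $0$-entries form a cyclic "interval" pattern. From $d(i+1,i)=1$ and the triangle inequality one gets $d(i+2,i)\leq 2$, and so on, and the constraint then propagates around the cycle. The expected contradiction comes from Lemma~\ref{lem:basic}(b) applied to $d(\nu i,i)=1$, $d(\nu i,j)$, $d(\nu j,i)$, $d(\nu j,j)=1$, combined with a counting argument: summing $d(i+1,j)+d(j,i)=1$ over all $i,j$ gives
\[
2\sum_{i,j} d(i,j)=n^2-\text{(correction terms)},
\]
and this should force too few $1$'s to make every pair $i\neq j$ satisfy $d(i,j)+d(j,i)\geq 1$. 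If the counting does not close, the fallback is a direct case analysis on $d(1,3)$ for $n=3$, followed by an induction that removes a vertex.

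Once $n=2$ is established, the conditions $d(2,1)=d(1,2)=1$ and $d(i,i)=0$ give ${\rm v}(A)=\begin{bsmallmatrix}0&1\\1&0\end{bsmallmatrix}$.
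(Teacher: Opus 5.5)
Your reduction to positive $b$-invariants and the conclusion that every $b$-invariant equals $1$, so that ${\rm v}(A)$ has entries in $\{0,1\}$, agree with the paper. The gap is the step you flag as the main one. Excluding $n\geq 3$ is only sketched, and the sketch rests on a false expectation.

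With $\nu i=i+1$ and all $b$-invariants equal to $1$, summing $d(i+1,j)+d(j,i)=1$ over all $i,j$ gives exactly $2\sum_{i,j}d(i,j)=n^2$, with no correction terms. Each unordered pair $\{i,j\}$ with $i\neq j$ contributes $d(i,j)+d(j,i)\in\{1,2\}$. So the identity says only that $n$ is even and that exactly $n/2$ pairs satisfy $d(i,j)=d(j,i)=1$. There are always enough $1$'s for basicness, so counting alone yields no contradiction.

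Your fallbacks do not help either. Lemma~\ref{lem:basic}~b) adds nothing here: under the Gorenstein relations it is equivalent to basicness. An induction deleting a vertex is unavailable, because removing an index from a cyclic Gorenstein tiled order need not leave a Gorenstein order.

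The missing idea is to propagate the Gorenstein relations between consecutive indices. For $n\geq 3$, substituting $(i,i-1)$ and $(i-1,i+1)$ for $(i,j)$ gives
\[
d(i+1,i-1)+d(i-1,i)=1=d(i,i+1)+d(i+1,i-1)\: .
\]
Hence $d(i-1,i)=d(i,i+1)$ for all $i$. Since $1=d(1,n)\leq \sum_{i=1}^{n-1}d(i,i+1)$, all $d(i,i+1)=1$, and therefore $d(i+2,i)=0$. This is the first step of the paper's proof, which then concludes by cases:
\begin{itemize}
\item[$\cdot$] for $n=3$, the entry $d(i+2,i)=d(i-1,i)$ would be both $0$ and $1$;
\item[$\cdot$] for $n=4$, one gets $d(1,3)=d(3,1)=0$, contradicting basicness;
\item[$\cdot$] for $n=5$, basicness gives $d(i,i+2)=1$, which contradicts the Gorenstein relation $d(1,3)+d(3,5)=d(1,5)=1$;
\item[$\cdot$] for $n\geq 6$, one gets $d(i+3,i)=0$, and a chain of triangle inequalities forces $d(n,1)=0$, contradicting $d(n,1)=1$.
\end{itemize}
Combined with your count, the finish is quicker. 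For $n\geq 3$ the $n$ distinct pairs $\{i,i+1\}$ all satisfy $d(i,i+1)=d(i+1,i)=1$, which exceeds the $n/2$ such pairs allowed.
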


\begin{proof}
Without loss of generality, we may and will assume that all $b$-invariants of $A$ are positive. Since the poset $\V_A$ has the single element $0$, all $b$-invariants of $A$ must be $1$. This implies that all entries of ${\rm v}(A)$ are $0$ or $1$. Assume that we have $\mathbb{I}=\{1, \ldots, n\}$. We extend the definition of $\mathbb{I}$ to $\Z$ by identifying $i+tn$ with $i$ for all $i\in \mathbb{I}$ and integers $t$. It suffices to show that $n$ equals $2$.

Assume that we have $n\geq 3$. Since $A$ is a cyclic Gorenstein tiled order, after possibly conjugating $A$ by a permutation matrix, we may and will assume that we have $\nu i=i+1$ for all $i\in \mathbb{I}$. Since we have $\sum_{i=1}^{n-1}d(i, i+1)\geq d(1, n)=1$, one of the summands equals $1$. If we use the equalities $d(i+1, j)+d(j, i)=d(i+1, i)$ iteratively, we deduce that we have $d(i, i+1)=1$ and $d(i+2, i)=0$ for all $i\in \mathbb{I}$. If $n$ equals $3$, then this is a contradiction. If $n$ equals $4$, then we have $d(1, 3)=d(3, 1)=0$. This contradicts to the basicness of $A$. Assume that we have $n\geq 5$. Since we have $d(i+2, i)=0$ for all $i\in \mathbb{I}$, by the basicness of $A$, we have $d(i, i+2)=1$ for all $i\in \mathbb{I}$. If $n$ equals $5$, then we have $d(1, 5)=d(1, 3)+d(3, 5)=2$. This contradicts to that it equals $1$. Assume that we have $n\geq 6$. We consider the equalities $d(i+3, i)=d(i+3, i+2)-d(i, i+2)=0$ for all $i\in \mathbb{I}$. If $n$ is odd, then we have $\sum_{i=1}^{\frac{n-1}{2}} d(2i+1, 2i-1)\geq d(n, 1)$. If $n$ is even, then we have $\sum_{i=1}^{\frac{n}{2}-2} d(2i+1, 2i-1)+d(n, n-3)\geq d(n, 1)$. So we deduce that $d(n, 1)$ must be $0$. This contradicts to that it equals $1$. This concludes the proof.
\end{proof}

\begin{proof}[Proof of Theorem \ref{thm:classification 1}]
The sufficiency in each part follows from part~b) of Theorem~\ref{thm:tilting object}. Let us prove the necessity. Assume that we have $\mathbb{I}=\{1, \ldots, n\}$. We extend the definition of $\mathbb{I}$ to $\Z$ by identifying $i+tn$ with $i$ for all $i\in \mathbb{I}$ and integers $t$.

a) Since $A$ is a Gorenstein tiled $k[x]$-order and the stable category $\ul{\CM \!}^{\Z} A$ vanishes, the algebra $A$ is hereditary. If all $b$-invariants of $A$ are positive or $A$ has a $b$-invariant greater than $1$, then by part~(3) of Theorem~8.2 of \cite{IyamaKimuraUeyama24}, the stable category $\ul{\CM \!}^{\Z} A$ does not vanish. This contradicts to that the global dimension of $A$ is finite. Then by Lemma~\ref{lem:basic}, exactly one of the $b$-invariants of $A$ equals $0$ and the others equal $1$. By Lemmas~\ref{lem:cyclic Gorenstein tiled order} and \ref{lem:gluing}, the Gorenstein order $A$ must be cyclic. After possibly conjugating $A$ by a permutation matrix, we may and will assume that we have $\nu i=i+1$ for all $i\in \mathbb{I}$ and $d(1, n)=0$. This implies that we have $d(1, i)=d(i, n)=0$ for all $i\in \mathbb{I}$. If we use the equalities $d(i+1, j)+d(j, i)=d(i+1, i)$ iteratively, all entries of the matrix ${\rm v}(A)$ are determined to be the desired one.

b) Without loss of generality, we may and will assume that all $b$-invariants of $A$ are positive. Since the poset $\V_A$ has the single element $0$, all $b$-invariants of $A$ must be $1$. This implies that all entries of ${\rm v}(A)$ are $0$ or $1$. By Lemma~\ref{lem:cyclic Gorenstein tiled order}, we may and will assume that the exponent matrix ${\rm v}(A)$ is a block matrix with all diagonal blocks of the form in Lemma~\ref{lem:cyclic Gorenstein tiled orders}. For distinct $i$ and $j$, by the Gorensteinness and the basicness of $A$, one of the $(i, j)$-block and the $(j, i)$-block of ${\rm v}(A)$ is the constant matrix $\mathbf{0}$ and the other is the constant matrix $\mathbf{1}$. We prove that $A$ is conjugate to the desired form by induction on the blockwise size of ${\rm v}(A)$. If the blockwise size of ${\rm v}(A)$ equals $1$, then it is clear. Assume that the claim holds if the blockwise size of ${\rm v}(A)$ equals $m\geq 1$. If the blockwise size of ${\rm v}(A)$ is $m+1$, then by induction, we may and will assume that its upper-left block of blockwise size $m$ is the desired form. If all blocks in the $(m+1)$-th blockwise column are the constant matrix $\mathbf{0}$, then we are done. Otherwise assume that the first block in the $(m+1)$-th blockwise column which is the constant matrix $\mathbf{1}$ lies in the $i$-th blockwise row. By the triangle inequalities, the $(j, m+1)$-block is also the constant matrix $\mathbf{1}$ for all $i<j\leq m$. If we conjugate $A$ by a permutation matrix, we can switch the $(i, m+1)$-block and the $(m+1, i)$-block of ${\rm v}(A)$ and keep the others. Then we do this for $i+1$, \ldots, $m+1$ to end the induction. This concludes the proof.

c) Without loss of generality, we may and will assume that all $b$-invariants of $A$ are positive. Since the poset $\V_A$ has only two elements, all $b$-invariants of $A$ must be less than or equal to $2$. If two $b$-invariants $d(\nu i, i)$ and $d(\nu j, j)$ of $A$ equal $2$, then both elements ${\rm v}(e_{\nu i}A(1)_{\geq 0})$ and ${\rm v}(e_{\nu j}A(1)_{\geq 0})$ of $\V_A$ are nonzero. So they must coincide. It follows that the entries $d(\nu i, j)$ and $d(\nu j, i)$ also equal $2$. By Lemma~\ref{lem:basic}, this is a contradiction. Therefore, exactly one of the $b$-invariants of $A$ equals $2$ and the others equal $1$. By Lemmas~\ref{lem:cyclic Gorenstein tiled order} and \ref{lem:gluing}, the Gorenstein order $A$ must be cyclic. After possibly conjugating $A$ by a permutation matrix, we may and will assume that we have $\nu i=i+1$ for all $i\in \mathbb{I}$ and $d(1, n)=2$. Since all the other $b$-invariants equal $1$, we must have $d(1, i)=d(i, n)=1$ for all $i\neq 1$, $n$. If we use the equalities $d(i+1, j)+d(j, i)=d(i+1, i)$ iteratively, all entries of the matrix ${\rm v}(A)$ are determined to be the desired one.
\end{proof}

\subsection{Type $A_3$}

\begin{theorem} \label{thm:classification 2}
Let $A$ be a basic $\N$-graded Gorenstein tiled order.
\begin{itemize}
\item[a)] The algebra $\Ga$ is isomorphic to the path algebra of the non-linear $A_3$ quiver with unique sink if and only if $A$ is isomorphic to the tiled order determined by the exponent matrix
$\begin{bsmallmatrix}
d_1 & d_2 \\
d_2 & d_1
\end{bsmallmatrix}$ of the following types.
\[
\begin{array}{|c||c|c|c|c|c|}\hline
\mbox{type} & d_1 & d_2 & \mbox{size of $d_i$}\\ \hline
\mathrm{I} &
\begin{bsmallmatrix}
0 & 1 & 1 & \cdots & 1 & 2 \\
1 & 0 & 1 & \cdots & 1 & 1 \\
0 & 1 & 0 & \cdots & 1 & 1 \\[-5pt]
\vdots & \vdots & \vdots & \ddots & \vdots & \vdots \\
0 & 0 & 0 & \cdots & 0 & 1 \\
0 & 0 & 0 & \cdots & 1 & 0
\end{bsmallmatrix} &
\begin{bsmallmatrix}
1 & 1 & 1 & \cdots & 1 & 1 \\
0 & 1 & 1 & \cdots & 1 & 1 \\
0 & 0 & 1 & \cdots & 1 & 1 \\[-5pt]
\vdots & \vdots & \vdots & \ddots & \vdots & \vdots \\
0 & 0 & 0 & \cdots & 1 & 1 \\
0 & 0 & 0 & \cdots & 0 & 1
\end{bsmallmatrix} & \geq 2 \\ \hline
\multirow{2}{*}{\raisebox{-3\totalheight}{$\mathrm{II}$}}
 &
\begin{bsmallmatrix}
0
\end{bsmallmatrix} &
\begin{bsmallmatrix}
2
\end{bsmallmatrix} & 1 \\ \cline{2-4}
 &
\begin{bsmallmatrix}
0 & 1 & 1 & \cdots & 1 & 1 \\
1 & 0 & 1 & \cdots & 1 & 1 \\
0 & 1 & 0 & \cdots & 1 & 1 \\[-5pt]
\vdots & \vdots & \vdots & \ddots & \vdots & \vdots \\
0 & 0 & 0 & \cdots & 0 & 1 \\
0 & 0 & 0 & \cdots & 1 & 0
\end{bsmallmatrix} &
\begin{bsmallmatrix}
1 & 1 & 1 & \cdots & 1 & 2 \\
0 & 1 & 1 & \cdots & 1 & 1 \\
0 & 0 & 1 & \cdots & 1 & 1 \\[-5pt]
\vdots & \vdots & \vdots & \ddots & \vdots & \vdots \\
0 & 0 & 0 & \cdots & 1 & 1 \\
0 & 0 & 0 & \cdots & 0 & 1
\end{bsmallmatrix} & \geq 2 \\ \hline
\end{array}
\]
\item[b)] The algebra $\Ga$ is isomorphic to the path algebra of the linear $A_3$ quiver if and only if $A$ is isomorphic to the tiled order determined by the exponent matrix
$\begin{bsmallmatrix}
d_1 & d_2 \\
d_3 & d_4
\end{bsmallmatrix}$ of the following types.
\[
\adjustbox{max width=\textwidth}{
$\begin{array}{|c||c|c|c|c|c|}\hline
\mbox{type} & d_1 & d_2 & d_3 & d_4 & \mbox{size of $d_i$}\\ \hline
\mathrm{III} &
\begin{bsmallmatrix}
0 & 1 & 1 & \cdots & 1 & 1 & 2 \\
1 & 0 & 1 & \cdots & 1 & 1 & 1 \\
0 & 1 & 0 & \cdots & 1 & 1 & 1 \\[-5pt]
\vdots & \vdots & \vdots & \ddots & \vdots & \vdots & \vdots \\
0 & 0 & 0 & \cdots & 0 & 1 & 1 \\
0 & 0 & 0 & \cdots & 1 & 0 & 1 \\
0 & 0 & 0 & \cdots & 0 & 1 & 0
\end{bsmallmatrix} &
\begin{bsmallmatrix}
0 & 1 & 1 & \cdots & 1 & 1 & 1 \\
0 & 0 & 1 & \cdots & 1 & 1 & 1 \\
0 & 0 & 0 & \cdots & 1 & 1 & 1 \\[-5pt]
\vdots & \vdots & \vdots & \ddots & \vdots & \vdots & \vdots \\
0 & 0 & 0 & \cdots & 0 & 1 & 1 \\
0 & 0 & 0 & \cdots & 0 & 0 & 1 \\
0 & 0 & 0 & \cdots & 0 & 0 & 0
\end{bsmallmatrix} &
\begin{bsmallmatrix}
1 & 1 & 1 & \cdots & 1 & 1 & 2 \\
1 & 1 & 1 & \cdots & 1 & 1 & 1 \\
0 & 1 & 1 & \cdots & 1 & 1 & 1 \\[-5pt]
\vdots & \vdots & \vdots & \ddots & \vdots & \vdots & \vdots \\
0 & 0 & 0 & \cdots & 1 & 1 & 1 \\
0 & 0 & 0 & \cdots & 1 & 1 & 1 \\
0 & 0 & 0 & \cdots & 0 & 1 & 1
\end{bsmallmatrix} &
\begin{bsmallmatrix}
0 & 1 & 1 & \cdots & 1 & 1 & 2 \\
1 & 0 & 1 & \cdots & 1 & 1 & 1 \\
0 & 1 & 0 & \cdots & 1 & 1 & 1 \\[-5pt]
\vdots & \vdots & \vdots & \ddots & \vdots & \vdots & \vdots \\
0 & 0 & 0 & \cdots & 0 & 1 & 1 \\
0 & 0 & 0 & \cdots & 1 & 0 & 1 \\
0 & 0 & 0 & \cdots & 0 & 1 & 0
\end{bsmallmatrix} & \geq 2 \\ \hline
\multirow{2}{*}{\raisebox{-3.8\totalheight}{$\mathrm{IV}$}}
 &
\begin{bsmallmatrix}
0 & 2 \\
1 & 0
\end{bsmallmatrix} &
\begin{bsmallmatrix}
2 & 0 \\
1 & 0
\end{bsmallmatrix} &
\begin{bsmallmatrix}
0 & 0 \\
1 & 2
\end{bsmallmatrix} &
\begin{bsmallmatrix}
0 & 0 \\
3 & 0
\end{bsmallmatrix} & 2 \\ \cline{2-6}
 &
\begin{bsmallmatrix}
0 & 1 & 1 & 1 & \cdots & 1 & 1 & 2 \\
1 & 0 & 1 & 1 & \cdots & 1 & 1 & 1 \\
0 & 1 & 0 & 1 & \cdots & 1 & 1 & 1 \\
0 & 0 & 1 & 0 & \cdots & 1 & 1 & 1 \\[-5pt]
\vdots & \vdots & \vdots & \vdots & \ddots & \vdots & \vdots & \vdots \\
0 & 0 & 0 & 0 & \cdots & 0 & 1 & 1 \\
0 & 0 & 0 & 0 & \cdots & 1 & 0 & 1 \\
0 & 0 & 0 & 0 & \cdots & 0 & 1 & 0
\end{bsmallmatrix} &
\begin{bsmallmatrix}
2 & 1 & 1 & 1 & \cdots & 1 & 1 & 0 \\
1 & 1 & 1 & 1 & \cdots & 1 & 1 & 0 \\
1 & 0 & 1 & 1 & \cdots & 1 & 1 & 0 \\
1 & 0 & 0 & 1 & \cdots & 1 & 1 & 0 \\[-5pt]
\vdots & \vdots & \vdots & \vdots & \ddots & \vdots & \vdots & \vdots \\
1 & 0 & 0 & 0 & \cdots & 1 & 1 & 0 \\
1 & 0 & 0 & 0 & \cdots & 0 & 1 & 0 \\
1 & 0 & 0 & 0 & \cdots & 0 & 0 & 0
\end{bsmallmatrix} &
\begin{bsmallmatrix}
0 & 0 & 0 & 0 & \cdots & 0 & 0 & 0 \\
0 & 1 & 1 & 1 & \cdots & 1 & 1 & 1 \\
0 & 0 & 1 & 1 & \cdots & 1 & 1 & 1 \\
0 & 0 & 0 & 1 & \cdots & 1 & 1 & 1 \\[-5pt]
\vdots & \vdots & \vdots & \vdots & \ddots & \vdots & \vdots & \vdots \\
0 & 0 & 0 & 0 & \cdots & 1 & 1 & 1 \\
0 & 0 & 0 & 0 & \cdots & 0 & 1 & 1 \\
1 & 1 & 1 & 1 & \cdots & 1 & 1 & 2
\end{bsmallmatrix} &
\begin{bsmallmatrix}
0 & 0 & 0 & 0 & \cdots & 0 & 0 & 0 \\
2 & 0 & 1 & 1 & \cdots & 1 & 1 & 0 \\
1 & 1 & 0 & 1 & \cdots & 1 & 1 & 0 \\
1 & 0 & 1 & 0 & \cdots & 1 & 1 & 0 \\[-5pt]
\vdots & \vdots & \vdots & \vdots & \ddots & \vdots & \vdots & \vdots \\
1 & 0 & 0 & 0 & \cdots & 0 & 1 & 0 \\
1 & 0 & 0 & 0 & \cdots & 1 & 0 & 0 \\
2 & 1 & 1 & 1 & \cdots & 1 & 2 & 0
\end{bsmallmatrix} & \geq 3 \\ \hline
\multirow{2}{*}{\raisebox{-3.4\totalheight}{$\mathrm{V}$}}
 &
\begin{bsmallmatrix}
0
\end{bsmallmatrix} &
\begin{bsmallmatrix}
1
\end{bsmallmatrix} &
\begin{bsmallmatrix}
3
\end{bsmallmatrix} &
\begin{bsmallmatrix}
0
\end{bsmallmatrix} & 1 \\ \cline{2-6}
 &
\begin{bsmallmatrix}
0 & 1 & 1 & \cdots & 1 & 1 & 1 \\
1 & 0 & 1 & \cdots & 1 & 1 & 1 \\
0 & 1 & 0 & \cdots & 1 & 1 & 1 \\[-5pt]
\vdots & \vdots & \vdots & \ddots & \vdots & \vdots & \vdots \\
0 & 0 & 0 & \cdots & 0 & 1 & 1 \\
0 & 0 & 0 & \cdots & 1 & 0 & 1 \\
0 & 0 & 0 & \cdots & 0 & 1 & 0
\end{bsmallmatrix} &
\begin{bsmallmatrix}
1 & 1 & 1 & \cdots & 1 & 1 & 1 \\
0 & 1 & 1 & \cdots & 1 & 1 & 0 \\
0 & 0 & 1 & \cdots & 1 & 1 & 0 \\[-5pt]
\vdots & \vdots & \vdots & \ddots & \vdots & \vdots & \vdots \\
0 & 0 & 0 & \cdots & 1 & 1 & 0 \\
0 & 0 & 0 & \cdots & 0 & 1 & 0 \\
0 & 0 & 0 & \cdots & 0 & 0 & 0
\end{bsmallmatrix} &
\begin{bsmallmatrix}
1 & 1 & 1 & \cdots & 1 & 1 & 2 \\
0 & 1 & 1 & \cdots & 1 & 1 & 1 \\
0 & 0 & 1 & \cdots & 1 & 1 & 1 \\[-5pt]
\vdots & \vdots & \vdots & \ddots & \vdots & \vdots & \vdots \\
0 & 0 & 0 & \cdots & 1 & 1 & 1 \\
0 & 0 & 0 & \cdots & 0 & 1 & 1 \\
1 & 1 & 1 & \cdots & 1 & 1 & 2
\end{bsmallmatrix} &
\begin{bsmallmatrix}
0 & 1 & 1 & \cdots & 1 & 1 & 0 \\
1 & 0 & 1 & \cdots & 1 & 1 & 0 \\
0 & 1 & 0 & \cdots & 1 & 1 & 0 \\[-5pt]
\vdots & \vdots & \vdots & \ddots & \vdots & \vdots & \vdots \\
0 & 0 & 0 & \cdots & 0 & 1 & 0 \\
0 & 0 & 0 & \cdots & 1 & 0 & 0 \\
1 & 1 & 1 & \cdots & 1 & 2 & 0
\end{bsmallmatrix} & \geq 2 \\ \hline
\multirow{2}{*}{\raisebox{-3.8\totalheight}{$\mathrm{VI}$}}
 &
\begin{bsmallmatrix}
0
\end{bsmallmatrix} &
\begin{bsmallmatrix}
0
\end{bsmallmatrix} &
\begin{bsmallmatrix}
4
\end{bsmallmatrix} &
\begin{bsmallmatrix}
0
\end{bsmallmatrix} & 1 \\ \cline{2-6}
 &
\begin{bsmallmatrix}
0 & 0 & 0 & 0 & \cdots & 0 & 0 & 0 \\
2 & 0 & 1 & 1 & \cdots & 1 & 1 & 1 \\
1 & 1 & 0 & 1 & \cdots & 1 & 1 & 1 \\
1 & 0 & 1 & 0 & \cdots & 1 & 1 & 1 \\[-5pt]
\vdots & \vdots & \vdots & \vdots & \ddots & \vdots & \vdots & \vdots \\
1 & 0 & 0 & 0 & \cdots & 0 & 1 & 1 \\
1 & 0 & 0 & 0 & \cdots & 1 & 0 & 1 \\
1 & 0 & 0 & 0 & \cdots & 0 & 1 & 0
\end{bsmallmatrix} &
\begin{bsmallmatrix}
0 & 0 & 0 & 0 & \cdots & 0 & 0 & 0 \\
0 & 1 & 1 & 1 & \cdots & 1 & 1 & 0 \\
0 & 0 & 1 & 1 & \cdots & 1 & 1 & 0 \\
0 & 0 & 0 & 1 & \cdots & 1 & 1 & 0 \\[-5pt]
\vdots & \vdots & \vdots & \vdots & \ddots & \vdots & \vdots & \vdots \\
0 & 0 & 0 & 0 & \cdots & 1 & 1 & 0 \\
0 & 0 & 0 & 0 & \cdots & 0 & 1 & 0 \\
0 & 0 & 0 & 0 & \cdots & 0 & 0 & 0
\end{bsmallmatrix} &
\begin{bsmallmatrix}
2 & 1 & 1 & 1 & \cdots & 1 & 1 & 2 \\
1 & 1 & 1 & 1 & \cdots & 1 & 1 & 1 \\
1 & 0 & 1 & 1 & \cdots & 1 & 1 & 1 \\
1 & 0 & 0 & 1 & \cdots & 1 & 1 & 1 \\[-5pt]
\vdots & \vdots & \vdots & \vdots & \ddots & \vdots & \vdots & \vdots \\
1 & 0 & 0 & 0 & \cdots & 1 & 1 & 1 \\
1 & 0 & 0 & 0 & \cdots & 0 & 1 & 1 \\
2 & 1 & 1 & 1 & \cdots & 1 & 1 & 2
\end{bsmallmatrix} &
\begin{bsmallmatrix}
0 & 1 & 1 & 1 & \cdots & 1 & 1 & 0 \\
1 & 0 & 1 & 1 & \cdots & 1 & 1 & 0 \\
0 & 1 & 0 & 1 & \cdots & 1 & 1 & 0 \\
0 & 0 & 1 & 0 & \cdots & 1 & 1 & 0 \\[-5pt]
\vdots & \vdots & \vdots & \vdots & \ddots & \vdots & \vdots & \vdots \\
0 & 0 & 0 & 0 & \cdots & 0 & 1 & 0 \\
0 & 0 & 0 & 0 & \cdots & 1 & 0 & 0 \\
1 & 1 & 1 & 1 & \cdots & 1 & 2 & 0
\end{bsmallmatrix} & \geq 2 \\ \hline
\end{array}$}
\]
\end{itemize}
\end{theorem}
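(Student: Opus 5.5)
The plan follows the proof of Theorem~\ref{thm:classification 1}. Sufficiency is a direct check. For each listed matrix we verify the triangle inequalities and basicness, and read off the Nakayama permutation from the Gorenstein condition $d(\nu i, j)+d(j, i)=d(\nu i, i)$. We then compute $\V_A=\{{\rm v}(e_i A(j)_{\geq 0})\}$ (with the modification when a $b$-invariant is $0$) and apply part~b) of Theorem~\ref{thm:tilting object}. In each type this should produce a three-element poset: either a minimum below two incomparable elements, giving the non-linear $A_3$ quiver with unique sink after passing to $\V_A\op$, or a chain of length three.

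For necessity we first normalize. In part~a) the poset $\V_A\op$ has a maximum, so $\V_A$ has a minimum, consistent with Theorem~\ref{thm:tilting object}~c). In both parts we cannot reduce as freely as for totally ordered sets, because conjugation by Proposition~\ref{prop:conjugacy} removes a maximum and adds the minimum $0$. This changes a ``V''-shaped poset and must be tracked, and it explains why types V and VI carry entries $0$ and $3$ or $4$. So we split by whether $A$ has a zero $b$-invariant. By Lemma~\ref{lem:basic}~a) there is at most one. When it exists, Lemma~\ref{lem:maximum and minimum} gives $\V_A$ both a maximum and a minimum. A three-element poset with both is a chain, so this case occurs only in part~b). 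When all $b$-invariants are positive, $0\in \V_A$ is the minimum. Then $|\V_A|=3$ bounds the $b$-invariants: the elements ${\rm v}(e_{\nu i}A(j)_{\geq 0})$ for $1\le j< d(\nu i,i)$ are nonzero and distinct for fixed $i$. Hence the multiset of $b$-invariants is either one $3$ and the rest $1$, or exactly two $2$'s and the rest $1$. As in part~c) of Theorem~\ref{thm:classification 1}, coincidences between such elements coming from different $i$ are ruled out by Lemma~\ref{lem:basic}~b). The chain versus non-chain shape then depends on whether the two nonzero elements are comparable.

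Next we use Lemma~\ref{lem:cyclic Gorenstein tiled order} to decompose ${\rm v}(A)$ into cyclic blocks, and Lemma~\ref{lem:gluing} to force equal average $b$-invariants across blocks. With two $2$'s among $1$'s, either $A$ is cyclic or it has exactly two blocks of equal size, each containing one $2$. This matches the $\begin{bsmallmatrix} d_1 & d_2\\ d_2 & d_1\end{bsmallmatrix}$ shape in types I and II, and the block forms in III and IV. With one $3$ (or $0$ together with larger entries) averaging forces a cyclic order or a very constrained gluing, giving the size-$1$ and general cases of V and VI. Within each cyclic block we normalize $\nu i=i+1$ by a permutation and place the large $b$-invariant at $d(1,n)$. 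Iterating $d(i+1,j)+d(j,i)=d(i+1,i)$ then determines the block, as in the proof of Theorem~\ref{thm:classification 1}. It should reproduce the diagonal blocks shown, essentially the third matrix of Theorem~\ref{thm:classification 1} and its variants.

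The main obstacle will be determining the off-diagonal blocks $d_2,d_3$ in the two-block cases. We will use the Gorenstein equalities across blocks, the triangle inequalities, basicness, and the requirement that the two nonzero poset elements be incomparable (part~a) or comparable (part~b)). We plan to mimic the inductive argument of Theorem~\ref{thm:classification 1}~b): each off-diagonal entry is pinned to $\{0,1\}$ or $\{1,2\}$ by the $b$-invariants in its row and column, and permutation conjugation normalizes the staircase patterns. We must also exclude three or more blocks, which requires showing that three blocks would create an extra poset element. The small sizes ($1$ and $2$) will be handled separately by direct enumeration, since there the generic staircase degenerates.
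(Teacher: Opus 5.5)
Your first reduction is sound. A zero $b$-invariant forces a chain by Lemma~\ref{lem:maximum and minimum}. With positive $b$-invariants the multiset is $\{3,1,\dots,1\}$ or $\{2,2,1,\dots,1\}$; note that the coincidence of a $3$-element with a $2$-element is excluded by maximality of $b$-invariants in their rows and columns, not by Lemma~\ref{lem:basic}~b). In the $\{2,2\}$ case the two nonzero elements are incomparable exactly when ${\rm v}(A)$ has only two entries $2$. So part~a) is the classification of the paper's normal form.

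\textbf{Gap in part~a): the cyclic case.} Your normalization (put the large $b$-invariant at $d(1,n)$, then iterate the Gorenstein equalities) only determines a cyclic block containing a single $2$, as in Theorem~\ref{thm:classification 1}~c). Type~II is not a gluing of two orbits despite its block presentation. It is one $\nu$-orbit containing both $2$'s; for size $2$, $\nu=(1\,2\,3\,4)$ with $b$-invariants $1,2,1,2$. With $\nu i=i+1$ and $d(1,n)=d(i_0+1,i_0)=2$, the iteration does not close until you prove $n=2i_0$. The paper does this by excluding $i_0\in\{1,n-1\}$, $2i_0<n$ and $2i_0>n$ through explicit chains of Gorenstein equalities, each forcing a third entry $2$ such as $d(n-i_0+1,n-i_0)$. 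Your proposal has nothing in its place. The step cannot be skipped: in general type~V the two $2$'s lie in one orbit at cyclic distance $n/2-1$, and that order is linear.

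\textbf{Missing idea in part~b): use conjugation as the reduction.} You treat conjugation only as an obstruction, but it is the paper's main tool, and a chain is unchanged by it. By Lemma~\ref{lem:reduction}, every $A$ with $\ul{\CM \!}^{\Z}A\simeq\per kA_3$ is conjugate to such a normal form. A zero $b$-invariant is removed by the shift of Proposition~\ref{prop:conjugacy}. A third entry $2$ at $d(\nu i,j)$ is removed by shifting at $\nu i$. So part~b) reduces to listing the $\N$-graded conjugates of types I and II. This is a finite check, since a shift by $x^{\pm1}$ at $i$ stays $\N$-graded only if the off-diagonal entries of column $i$, respectively row $i$, are positive. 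Without this reduction you must classify directly the orders with three entries $2$ (III and general~V) and all zero-$b$-invariant ones (IV and VI, with $b$-invariants such as $0,1,2,3$, or $0,2,2,2$, or $0,4$). ``Iterating the equalities'' is not an argument for any of these.

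\textbf{Case assignments.} Your assignment of types to cases is also off.
With positive $b$-invariants a $3$ forces $n=2$: the other $b$-invariants are $1$, so $d(\nu i,j)+d(j,i)\le 2$ for $j\notin\{i,\nu i\}$. Hence the ``one $3$'' case yields only $\begin{bsmallmatrix}0&1\\3&0\end{bsmallmatrix}$, the size-$1$ matrix of type~V. You are right to keep this case: the first paragraph of the paper's Lemma~\ref{lem:reduction} overlooks it, though the lemma's conclusion survives one more conjugation.
General~V has $b$-invariants $\{2,2,1,\dots\}$, and IV has a zero $b$-invariant, so neither sits where you placed it.
Finally, in the positive case three or more orbits are excluded at once by Lemma~\ref{lem:gluing}, since an orbit without a $2$ has average $1$. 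No poset argument is needed.
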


To prove Theorem~\ref{thm:classification 2}, we use the following lemma to reduce the problem.

\begin{lemma} \label{lem:reduction}
Let $A$ be a basic $\N$-graded Gorenstein tiled order such that the stable category $\ul{\CM \!}^{\Z} A$ is triangle equivalent to $\per kA_3$. Then $A$ is conjugate to a tiled order $B$ satisfying the following conditions.
\begin{itemize}
\item[a)] All $b$-invariants of $B$ are positive.
\item[b)] All entries of ${\rm v}(B)$ are less than or equal to $2$.
\item[c)] Exactly two entries of ${\rm v}(B)$ equal $2$.
\end{itemize}
In particular, all non-diagonal entries in the same row or column of the two entries $2$ equal $1$.
\end{lemma}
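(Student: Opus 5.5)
First I will reduce to positive $b$-invariants, as in the discussion preceding Lemma~\ref{lem:cyclic Gorenstein tiled orders}. Since $\per kA_3$ is nonzero, $\V_A$ is nonempty. If $A$ has a zero $b$-invariant $d(\nu i,i)$, I conjugate by the diagonal matrix with $i$-th entry $x^{-1}$. By the proof of Lemma~\ref{lem:maximum and minimum} and Proposition~\ref{prop:conjugacy}, this produces $B$ with all $b$-invariants positive. By Lemma~\ref{lem:basic}~a) at most one $b$-invariant of $A$ is $0$, so one step suffices. This gives a). In the resulting $B$, the poset $\V_B$ has three elements and minimum $0$. Since $\Ga_B\simeq k\V_B\op$ is derived equivalent to $kA_3$, the algebra has three simple modules, so $|\V_B|=3$.

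For b) I use the description $\V_B=\{{\rm v}(e_{\nu i}B(j)_{\geq 0})\mid 1\le j\le d(\nu i,i)-1\}\cup\{0\}$ when all $b$-invariants are positive. If some $b$-invariant satisfies $d(\nu i,i)=b\ge 3$, the elements ${\rm v}(e_{\nu i}B(j)_{\geq0})$ for $j=1,\dots,b-1$ form a chain of $b-1$ nonzero elements strictly decreasing in $j$. They are strictly decreasing because the $\nu i$-th coordinate equals $b-j$. Together with $0$ this is a chain of length at least $3$, so $b=3$ and $\V_B$ is totally ordered. This case must be ruled out, or else absorbed by a further conjugation. When $\V_B$ is a chain, the paper's remark says conjugation does not change the poset. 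I will conjugate so as to move the maximum (using Proposition~\ref{prop:conjugacy}, which removes ${\rm v}(e_{\nu i}B(1)_{\ge0})$ and adds ${\rm v}(e_iB)$) until a configuration with all entries $\le 2$ is reached. If all $b$-invariants of $B$ are at most $2$, every entry is at most $2$, since in an $\N$-graded Gorenstein tiled order each entry is bounded by the $b$-invariant in its row. This yields b) outside the chain-with-$b=3$ case.

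For c), each $b$-invariant equal to $2$ contributes the nonzero element ${\rm v}(e_{\nu i}B(1)_{\geq0})$, which has $\nu i$-th coordinate $1$. There are exactly two nonzero elements of $\V_B$. By the argument in the proof of Theorem~\ref{thm:classification 1}~c), two $b$-invariants equal to $2$ cannot give the same element, by Lemma~\ref{lem:basic}~b). Moreover, three $b$-invariants equal to $2$ would give three distinct nonzero elements. So exactly two $b$-invariants equal $2$. An entry $2$ off the $b$-invariant positions would lie in a row $\nu i$ whose maximum is $2$, hence $d(\nu i,i)=2$. I then need to show that this forces $d(\nu i,j)=2$ to give a coincidence between the vectors ${\rm v}(e_{\nu i}B(1)_{\ge0})$, contradicting the count via Lemma~\ref{lem:basic}. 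The final assertion follows directly. If $d(\nu i,i)=2$, the Gorenstein relation $d(\nu i,j)+d(j,i)=2$, together with the absence of other $2$'s and basicness, forces the off-diagonal entries of that row and column to avoid $0$ except at the diagonal, that is, to equal $1$.

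The main obstacle is the case where some $b$-invariant equals $3$, i.e.\ $\V_B$ is a chain. There I expect to need an explicit conjugation converting it to a form with two $b$-invariants equal to $2$, checking $\N$-gradedness after conjugation. I would first try the conjugation at the row $i$ with $d(\nu i,i)=3$ and verify the entries using the Gorenstein identities.
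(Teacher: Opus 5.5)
Your reduction to positive $b$-invariants and your count giving exactly two $b$-invariants equal to $2$ (once all are $\le 2$) agree with the paper. Two steps, however, do not go through.

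First, in c) you plan to show that an entry $2$ off the $b$-invariant positions contradicts Lemma~\ref{lem:basic}. That claim is false. Lemma~\ref{lem:basic}~b) only forbids $d(\nu i,j)$ and $d(\nu j,i)$ from both being $2$, and one of them can be. For example, conjugating the $4\times 4$ order of type~$\mathrm{IV}$ by $\diag(1,1,1,x^{-1})$, which removes its zero $b$-invariant, gives
\[
\begin{bmatrix} 0&2&2&1\\ 1&0&1&1\\ 0&0&0&1\\ 0&1&2&0 \end{bmatrix}.
\]
This order has $\nu=(1\,2)(3\,4)$, $b$-invariants $1,2,2,1$, $\V$ equal to the chain $0<(0,0,1,0)<(0,1,1,0)$, and three entries equal to $2$. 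This is why the lemma only asserts conjugacy: the paper performs a second conjugation.

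Suppose $d(\nu i,j)=2$. Then $d(\nu j,i)\neq 2$, so $d(\nu i,i)$ is the only $2$ in column $i$. The identity $d(\nu i,l)+d(l,i)=2$ then forces the off-diagonal entries of row $\nu i$ to be positive. Hence conjugating by $x^{-1}$ at position $\nu i$ stays $\N$-graded, and it has the following effects:
\begin{itemize}
\item[(1)] it lowers $d(\nu i,i)$ to $1$ and raises $d(\nu^2 i,\nu i)$ to $2$;
\item[(2)] it turns $d(\nu j,\nu i)=d(\nu j,j)-d(\nu i,j)=0$ into $1$;
\item[(3)] it leaves the rest of row $\nu^2 i$ (all entries $\le 1$) unchanged.
\end{itemize}
So exactly two entries $2$ remain. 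In the example this yields $\begin{bsmallmatrix} 0&1&1&0\\ 2&0&1&1\\ 1&0&0&1\\ 1&1&2&0 \end{bsmallmatrix}$, which is of type~$\mathrm{I}$ up to relabelling.

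Second, you leave the case of a $b$-invariant equal to $3$ open and hope to remove it by conjugation. For $n\geq 3$ this case simply cannot occur. Choose $l\notin\{i,\nu i\}$. Since $d(\nu i,l)+d(l,i)=3$, one of these entries is $\geq 2$. As a $b$-invariant dominates its row and column, some $b$-invariant $d(\nu j,j)$ with $j\neq i$ is then $\geq 2$.

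Let $v_s={\rm v}(e_{\nu i}A(s)_{\geq 0})$, so $\V_A=\{0,v_1,v_2\}$. Note that $3-s$ is the $i$-th coordinate of $v_s$, not the $\nu i$-th as you wrote. The element ${\rm v}(e_{\nu j}A(1)_{\geq 0})$ is nonzero, so it equals $v_1$ or $v_2$. Comparing $i$-th and $j$-th coordinates yields one of two contradictions:
\begin{itemize}
\item[(1)] $d(\nu i,j)=d(\nu j,i)=d(\nu j,j)=3$, contradicting Lemma~\ref{lem:basic}~b);
\item[(2)] $d(\nu i,j)=3>d(\nu j,j)=2$, contradicting maximality of the $b$-invariant in column $j$.
\end{itemize}
The case does occur for $n=2$, for example ${\rm v}(A)=\begin{bsmallmatrix}0&1\\3&0\end{bsmallmatrix}$ (type~$\mathrm{V}$). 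There your idea is right: conjugating by $\diag(x,1)$ gives $\begin{bsmallmatrix}0&2\\2&0\end{bsmallmatrix}$. The paper's own argument at this point also tacitly uses $n\geq 3$.
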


\begin{proof}
Without loss of generality, we may and will assume that all $b$-invariants of $A$ are positive. Since the poset $\V_A$ has only three elements, all $b$-invariants of $A$ must be less than or equal to $3$. If a $b$-invariant $d(\nu i, i)$ of $A$ equals $3$, then either the $\nu i$-th row or the $i$-th column of ${\rm v}(A)$ has an entry greater than or equal to $2$. In both cases $A$ has another $b$-invariant $d(\nu j, j)$ greater than or equal to $2$. If $d(\nu j, j)$ equals $3$, then the element ${\rm v}(e_{\nu j}A(1)_{\geq 0})$ must equal ${\rm v}(e_{\nu i}A(1)_{\geq 0})$ in $\V_A$. It follows that we have $d(\nu i, j)=d(\nu j, i)=3$. By Lemma~\ref{lem:basic}, this is a contradiction. If $d(\nu j, j)$ equals $2$, then the element ${\rm v}(e_{\nu j}A(1)_{\geq 0})$ must equal ${\rm v}(e_{\nu i}A(2)_{\geq 0})$ in $\V_A$. It follows that we have $d(\nu i, j)=3>d(\nu j, j)$. This is a contradiction. Therefore, all $b$-invariants of $A$ must be less than or equal to $2$.

If there are three rows or three columns of ${\rm v}(A)$ which have entries equal to $2$, then three $b$-invariants $d(\nu i, i)$, $d(\nu j, j)$, and $d(\nu k, k)$ of $A$ equal $2$. So all the elements ${\rm v}(e_{\nu i}A(1)_{\geq 0})$, ${\rm v}(e_{\nu j}A(1)_{\geq 0})$, and ${\rm v}(e_{\nu k}A(1)_{\geq 0})$ of $\V_A$ are nonzero. But $\V_A$ has only two nonzero elements. So two of the three elements coincide. Without loss of generality, we may and will assume that we have ${\rm v}(e_{\nu i}A(1)_{\geq 0})={\rm v}(e_{\nu j}A(1)_{\geq 0})$. It follows that we have $d(\nu i, j)=d(\nu j, i)=2$. By Lemma~\ref{lem:basic}, this is a contradiction. Therefore, the entries of ${\rm v}(A)$ which equal to $2$ are concentrated in a submatrix of size $2$. In particular, exactly two $b$-invariants $d(\nu i, i)$ and $d(\nu j, j)$ of $A$ equal $2$.

If the entry $d(\nu i, j)$ equals $2$, by Lemma~\ref{lem:basic}, the entry $d(\nu j, i)$ does not equal $2$. So $d(\nu i, i)$ is the unique entry in the $i$-th column of ${\rm v}(A)$ which equals $2$. It follows that the entry $d(i, i)$ is the unique entry in the $i$-th row of ${\rm v}(A)$ which equals $0$. We replace $A$ by $B=DAD^{-1}$, where $D$ is the diagonal matrix with the $\nu i$-th diagonal entry $x^{-1}$ and the others $1$. In the new matrix ${\rm v}(B)$, the entries which equal to $2$ are still concentrated in the submatrix of size $2$ given by the rows and columns containing the $(\nu j, j)$-entry and the $(\nu^2 i, \nu i)$-entry. Since we have $d(\nu j, \nu i)=d(\nu j, j)-d(\nu i, j)=0$, the $(\nu j, \nu i)$-entry in ${\rm v}(B)$ equals $1$. So only the $(\nu j, j)$-entry and the $(\nu^2 i, \nu i)$-entry in ${\rm v}(B)$ equal $2$. In conclusion, the tiled order $A$ is conjugate to a tiled order $B$ such that exactly two entries of the exponent matrix equal $2$. The last statement is clear.
\end{proof}

\begin{proof}[Proof of Theorem \ref{thm:classification 2}]
The sufficiency in each part follows from part~b) of Theorem~\ref{thm:tilting object}. Let us prove the necessity. Assume that we have $\mathbb{I}=\{1, \ldots, n\}$. We extend the definition of $\mathbb{I}$ to $\Z$ by identifying $i+tn$ with $i$ for all $i\in \mathbb{I}$ and integers $t$.

a) After possibly replacing $A$ with a conjugate $\N$-graded Gorenstein tiled order, we may and will assume that $A$ satisfies all conditions in Lemma~\ref{lem:reduction}. We first consider the case that $A$ is a non-cyclic Gorenstein tiled order. By Lemma~\ref{lem:gluing}, the set $\mathbb{I}$ has exactly two orbits under the Nakayama permutation and each orbit contains an element such that the corresponding $b$-invariant equals to $2$. Moreover, these two orbits have the same cardinality. By Lemma~\ref{lem:cyclic Gorenstein tiled order}, we may and will assume that the exponent matrix ${\rm v}(A)$ is a block matrix such that each block is of size $\frac{n}{2}$ and the diagonal blocks are the exponent matrices associated with cyclic Gorenstein tiled orders. By the proof of part~c) of Theorem~\ref{thm:classification 1}, the diagonal blocks of ${\rm v}(A)$ must be of the form $d_1$ in type $\mathrm{I}$. Since all non-diagonal entries in the same row or column of the two $b$-invariants which equal $2$ are $1$, if we use the equalities $d(i+1, j)+d(j, i)=d(i+1, i)$ iteratively, all entries of the matrix ${\rm v}(A)$ are determined to be the entries in type $\mathrm{I}$.

We now consider the case that $A$ is a cyclic Gorenstein tiled order. If $n$ equals $2$, then the statement is clear. We assume that $n$ is greater than $2$. After possibly conjugating $A$ by a permutation matrix, we may and will assume that we have $\nu i=i+1$ for all $i\in \mathbb{I}$ and $d(1, n)=d(i_0+1, i_0)=2$. If $i_0$ equals $1$, then we have $d(n, 1)=d(2, 1)-d(2, n)=1$ and hence we have
\[
d(n, n-1)=d(n, 1)+d(1, n-1)=2 \: .
\]
But we have $n-1\neq n$, $i_0$, this contradicts to that only the $(1, n)$-entry and the \linebreak $(i_0+1, i_0)$-entry equal to $2$. Thus we have $i_0\neq 1$. Similarly, we have $i_0\neq n-1$. If $2i_0$ is less than $n$, then we have $d(i+1, i)=1$ for all $n-i_0+1\leq i\leq n-1$. Since $d(i_0+1, n)$ equals $1$, we have
\begin{align*}
d(n, i_0) & =d(i_0+1, i_0)-d(i_0+1, n)=1 \: ,\\
d(i_0, n-1) & =d(n, n-1)-d(n, i_0)=0 \: ,\\
 & \ldots \\
d(2, n-i_0+1) & =d(n-i_0+2, n-i_0+1)-d(n-i_0+2, 2)=0 \: , \\
d(n-i_0+1, 1) & =d(2, 1)-d(2, n-i_0+1)=1 \: ,
\end{align*}
respectively. We deduce that we have
\[
d(n-i_0+1, n-i_0)=d(n-i_0+1, 1)+d(1, n-i_0)=2 \: .
\]
But we have $n-i_0\neq n$, $i_0$, this contradicts to that only the $(1, n)$-entry and the \linebreak $(i_0+1, i_0)$-entry equal to $2$. Thus we have $2i_0\geq n$. Then we have $d(i+1, i)=1$ for all $1\leq i\leq n-i_0-1$. Since $d(1, i_0)$ equals $1$, we have
\begin{align*}
d(i_0+1, 1) & =d(i_0+1, i_0)-d(1, i_0)=1 \: ,\\
d(2, i_0+1) & =d(2, 1)-d(i_0+1, 1)=0 \: ,\\
 & \ldots \\
d(n-i_0, n-1) & =d(n-i_0, n-i_0-1)-d(n-1, n-i_0-1)=0 \: , \\
d(n, n-i_0) & =d(n, n-1)-d(n-i_0, n-1)=1 \: ,
\end{align*}
respectively. We deduce that we have $d(n-i_0+1, n-i_0)=d(n-i_0+1, n)+d(n, n-i_0)=2$. But only the $(1, n)$-entry and the $(i_0+1, i_0)$-entry equal to $2$, so we have $n-i_0=i_0$. If we use the equalities $d(i+1, j)+d(j, i)=d(i+1, i)$ iteratively, all entries of the matrix ${\rm v}(A)$ are determined to be the entries in type $\mathrm{II}$.

b) By Lemma~\ref{lem:reduction}, the tiled order $A$ is conjugate to one of the classes given in part~a). Consider the diagonal matrix $D=\diag(x^{l_1}, \ldots, x^{l_n})$ satisfying $l_i=1$ and $l_j=0$ for all $j\neq i$. If $DAD^{-1}$ is $\N$-graded, then all non-diagonal entries in the $i$-th column of ${\rm v}(A)$ must be positive. If $D^{-1}AD$ is $\N$-graded, then all non-diagonal entries in the $i$-th row of ${\rm v}(A)$ must be positive. From this fact, one easily check that types $\mathrm{III}$ and $\mathrm{IV}$ (respectively, $\mathrm{V}$ and $\mathrm{VI}$) are the only ones which are conjugate to type $\mathrm{I}$ (respectively, $\mathrm{II}$). Then the statement follows.
\end{proof}

We end this section with posing the following question.

\begin{question}
For any basic $\N$-graded Gorenstein tiled order $A$ such that $\Ga$ is hereditary, does the existence of $b$-invariants greater than $2$ imply that $\V_A$ is of (not necessarily linear) type $A$?
\end{question}

\subsection{Endomorphism algebras $\Ga_A$ of small global dimension}

Let $m_1$, \ldots, $m_n$ be non-negative integers such that not all of them are zero. Put $d(i, i)=0$, $d(i, j)=\sum_{k=i}^{j-1}m_k$ if $i<j$ and $d(i, j)=\sum_{k=i}^n m_k +\sum_{k=1}^{j-1}m_k$ if $i>j$. By Section~8.2 of \cite{IyamaKimuraUeyama24}, it gives rise to a basic Gorenstein tiled order $A(m_1, \ldots, m_n)$ determined by the exponent matrix $[d(i, j)]$.

\begin{proposition} \label{prop:global dimension 2}
For a basic Gorenstein tiled order $A(m_1, \ldots, m_n)$, the algebra $\Ga$ is of global dimension less than or equal to $2$.
\end{proposition}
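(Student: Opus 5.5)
We would compute the poset $\V_A$ explicitly for $A=A(m_1,\ldots,m_n)$ and then bound the global dimension of the incidence algebra $k\V_A\op$ by a combinatorial argument, using Theorem~\ref{thm:tilting object}~b), which identifies $\Ga$ with $k\V_A\op$.

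First we identify the invariants. The exponent matrix has $d(i,j)$ equal to the clockwise distance from $i$ to $j$ on a cycle with edge lengths $m_1,\ldots,m_n$. Let $N=\sum_k m_k$. Then $d(i,j)+d(j,i)=N$ for $i\neq j$. Hence the Nakayama permutation is $\nu i=i+1$ (indices mod $n$), and the $b$-invariants are $d(i+1,i)=N-m_i$. Indeed, $d(i+1,j)+d(j,i)=d(i+1,i)$ for every $j$, since going from $i+1$ to $j$ and then to $i$ never crosses the edge $m_i$. Next we compute the elements ${\rm v}(e_{i+1}A(l)_{\geq 0})$ for $1\leq l$. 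The $j$-th coordinate is $\max(d(i+1,j)-l,0)$, which we view as a truncated distance function. Placing the points on a circle of circumference $N$, each element of $\V_A$ is determined by a point $t$ on the circle (namely position of $i+1$ plus $l$, moving backwards), with coordinate $j$ equal to the clockwise distance from $t$ to $j$ truncated at $0$ when $j$ lies within the arc of length $l$ behind $i+1$. Equivalently, elements of $\V_A$ are indexed by pairs (an arc ending at a vertex $i+1$, a shift $l$). Comparing coordinatewise gives an explicit order. We expect $\V_A$ to be a subposet of $\Z^2$ (or a union of intervals in a product of two chains), in the spirit of Example~\ref{ex:poset}, parametrised by the start and end points of the arc where the coordinates vanish.

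Second, we prove the global dimension bound. For an incidence algebra $kP$, $\Ext^l(S_x,S_y)$ is the reduced cohomology $\tilde H^{l-2}$ of the open interval $(x,y)$ (shifted appropriately), so it suffices to show that every open interval in $\V_A$ has vanishing reduced homology in degrees $\geq 1$. The projective resolution of $S_x$ then has length at most $2$. If $\V_A$ embeds in $\Z^2$ as a convex subposet closed under meets and joins inside the relevant intervals (a distributive lattice locally of width two), then each open interval is either contractible or homotopy equivalent to a discrete set. The latter happens exactly for the $2\times2$ squares. This gives $\mathrm{gldim}\leq 2$.

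The main obstacle is the first step: describing $\V_A$ precisely enough to see its two-dimensional, lattice-like structure, including the boundary effects where coordinates are truncated at $0$ and where some $m_i=0$. If the direct coordinate description is messy, we would instead use Proposition~\ref{prop:covering of Gorenstein order} and Proposition~\ref{prop:standard silting} to reduce to the case where all $m_i\in\{0,1\}$, via a covering-type comparison, where the combinatorics is that of subintervals of a cycle. Failing that, we would argue directly in $\ul{\CM \!}^{\Z} A$ that for each summand of $V$ there is an exact sequence $0\to V_3\to V_2\to V_1\to S\to0$ of representables.
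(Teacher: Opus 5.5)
\textbf{There is a genuine gap.} Your reduction is sound. Theorem~\ref{thm:tilting object}~b) gives $\Ga\cong k\V_A\op$, and for an incidence algebra $\Ext^l(S_x,S_y)\cong\tilde H^{l-2}((x,y);k)$. So it suffices to show that no open interval of $\V_A$ has reduced homology in degrees $\geq 1$. The problem is that the structure you plan to use to check this is false, and the combinatorics is never actually done.

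Take $A=A(2,2,2)$, whose exponent matrix has rows $(0,2,4)$, $(4,0,2)$, $(2,4,0)$. Then $\V_A$ consists of $0$ together with three chains: $(0,0,1)<(0,0,2)<(0,1,3)$, $(1,0,0)<(2,0,0)<(3,0,1)$ and $(0,1,0)<(0,2,0)<(1,3,0)$, plus cross relations. Put $u_1=(0,0,1)$, $u_2=(1,0,0)$, $u_3=(0,1,0)$ and $t_1=(0,1,3)$, $t_2=(3,0,1)$, $t_3=(1,3,0)$. The $u$'s and the $t$'s are antichains, and $u_a\leq t_b$ holds except exactly for $u_1\not\leq t_3$, $u_2\not\leq t_1$, $u_3\not\leq t_2$.

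\begin{itemize}
\item This is the standard example $S_3$, of order dimension $3$. So $\V_A$ embeds neither in $\Z^2$ nor in any product of two chains.
\item The interval $[0,t_1]=\{0<u_1<(0,0,2)<t_1\}\cup\{0<u_3<t_1\}$ is the pentagon lattice, which is not distributive.
\item Its open interval is homotopy equivalent to two points, although it is not a $2\times 2$ square.
\end{itemize}

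Hence your hypothesis (a convex, meet/join-closed subposet of $\Z^2$, ``a distributive lattice locally of width two'') fails, and your classification of intervals does not apply. The cyclic index set of $A(m_1,\ldots,m_n)$ is exactly what destroys the planar picture you extrapolated from Example~\ref{ex:poset}. Your fallbacks are not carried out either: neither the covering reduction to $m_i\in\{0,1\}$ nor the unspecified resolution $0\to V_3\to V_2\to V_1\to S\to 0$. So the essential step of the proof is missing.

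\textbf{What is missing} is a local description of lower covers rather than a global embedding. Let $x={\rm v}(e_iA(j)_{\geq 0})$ and let $y<x$ in $\V_A$, represented from row $i'$.
\begin{itemize}
\item If $i'=i$, then $y\leq {\rm v}(e_iA(j+1)_{\geq 0})$.
\item If $i'\neq i$, then $y\leq {\rm v}(e_{i-1}A(j+m_{i-1})_{\geq 0})$.
\end{itemize}
So $\{y<x\}$ has at most two maximal elements. Their common lower bounds are either empty or have the maximum ${\rm v}(e_{i-1}A(j+1+m_{i-1})_{\geq 0})$. This gives every simple a projective resolution $0\to P_c\to P_a\oplus P_b\to P_x\to S_x\to 0$ of length at most $2$; the paper concludes via Theorem~2.2 of Iyama--Marczinzik. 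This is precisely the concrete content your third fallback would need. In the example above, for $x=t_1$ the two maximal elements are $(0,0,2)$ and $(0,1,0)$, and their meet is $0$.
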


\begin{proof}
Given an element ${\rm v}(e_i A(j)_{\geq 0})$ of $\V_A$. For any ${\rm v}(e_{i'} A(j')_{\geq 0}) \in \V_A$ satisfying \linebreak ${\rm v}(e_{i'} A(j')_{\geq 0})<{\rm v}(e_i A(j)_{\geq 0})$ in $\V_A$, if $i'$ equals $i$, then we have
\[
{\rm v}(e_{i'} A(j')_{\geq 0})\leq {\rm v}(e_i A(j+1)_{\geq 0})\: .
\]
If $i'$ does not equal $i$, then we have
\[
{\rm v}(e_{i'} A(j')_{\geq 0})\leq {\rm v}(e_{i-1} A(j+m_{i-1})_{\geq 0})\: .
\]
This implies that ${\rm v}(e_i A(j+1)_{\geq 0})$ and ${\rm v}(e_{i-1} A(j+m_{i-1})_{\geq 0})$ are the only possible maximal elements in the subposet $\{x \in \V_A \mid x<{\rm v}(e_i A(j)_{\geq 0}) \}$. On the other hand, the only possible meet of these two elements in $\V_A$ is ${\rm v}(e_{i-1} A(j+1+m_{i-1})_{\geq 0})$. Then the statement follows from part~b) of Theorem~\ref{thm:tilting object} and Theorem~2.2 of \cite{IyamaMarczinzik22}.
\end{proof}

\begin{lemma} \label{lem:rank 3}
Any basic $\N$-graded Gorenstein tiled order with $|\mathbb{I}|=3$ is of the form $A(m_1, m_2, m_3)$.
\end{lemma}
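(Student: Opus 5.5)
The plan is to show first that the Nakayama permutation of such an order is a $3$-cycle, and then to read off $m_1$, $m_2$, $m_3$ from the entries of the exponent matrix just above the diagonal. The Gorenstein equations will then force every remaining entry to be the one in the definition of $A(m_1, m_2, m_3)$. Throughout, write ${\rm v}(A)=[d(i, j)]$ with $\mathbb{I}=\{1, 2, 3\}$, and let $\nu$ be the Nakayama permutation, so that $d(\nu i, j)+d(j, i)=d(\nu i, i)$ for all $i$, $j$. Since $A$ is $\N$-graded, all entries $d(i, j)$ are non-negative.

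\emph{Step 1: $\nu$ has no fixed points.} Suppose $\nu i=i$. The Gorenstein equation gives $d(i, j)+d(j, i)=d(i, i)=0$ for every $j$. By non-negativity, $d(i, j)=d(j, i)=0$. Taking any $j\neq i$ (possible since $|\mathbb{I}|=3$) contradicts basicness, which requires $d(i, j)+d(j, i)>0$. A fixed-point-free permutation of a $3$-element set is a $3$-cycle. So $A$ is a cyclic Gorenstein order, and after conjugating by a permutation matrix (which gives an isomorphic tiled order, as in the proof of Lemma~\ref{lem:cyclic Gorenstein tiled order}) we may assume $\nu i=i+1$, with indices read modulo $3$.

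\emph{Step 2: identify the parameters.} Put $m_1=d(1, 2)$, $m_2=d(2, 3)$ and $m_3=d(3, 1)$, that is, $m_i=d(i, i+1)$. These are non-negative integers. Now apply the Gorenstein equation with $j=i+2$, which is the only $j$ giving a nontrivial condition. It reads
\[
d(i+1, i+2)+d(i+2, i)=d(i+1, i)\: ,
\]
that is, $d(i+1, i)=m_{i+1}+m_{i+2}$. For $j=i$ and $j=i+1$ the equation is trivially satisfied. Thus every off-diagonal entry is determined: $d(i, i+1)=m_i$ and $d(i+1, i)=m_{i+1}+m_{i+2}$.

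\emph{Step 3: compare with the definition, and check that not all $m_i$ vanish.} Compare with the formula for $A(m_1, m_2, m_3)$ entry by entry. For $i<j$ it gives $d(1, 2)=m_1$, $d(2, 3)=m_2$ and $d(1, 3)=m_1+m_2$. For $i>j$ it gives $d(2, 1)=m_2+m_3$, $d(3, 2)=m_3+m_1$ and $d(3, 1)=m_3$. These agree with the values from Step 2, where for instance $d(1, 3)=d(3+1, 3)=m_1+m_2$. Finally, if all $m_i$ were zero, every entry of ${\rm v}(A)$ would vanish, contradicting basicness. Hence $A$ is, up to isomorphism, the tiled order $A(m_1, m_2, m_3)$.

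I do not expect a real obstacle. The only point needing care is Step 1, namely excluding a Nakayama permutation with a fixed point (a transposition or the identity); that is exactly where the $\N$-grading and basicness are used. The remaining steps are bookkeeping of indices modulo $3$.
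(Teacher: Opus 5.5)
Your proof is correct and is essentially the paper's argument: the paper likewise notes that $\nu$ has no fixed points, conjugates by a permutation matrix so that ${\rm v}(A)$ takes the cyclic form with parameters $a, b, c\geq 0$ not all zero, and leaves implicit the entry-by-entry derivation from the Gorenstein equations that you carry out in Step~2. One small correction to your closing remark: Step~1 does not need the $\N$-grading, since $d(i,j)+d(j,i)=d(i,i)=0$ already contradicts basicness; the $\N$-grading is used only to guarantee $m_i\geq 0$.
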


\begin{proof}
Let $A$ be a basic $\N$-graded Gorenstein tiled order with $|\mathbb{I}|=3$. Then the Nakayama permutation of $A$ has no fixed points and hence $A$ is a cyclic Gorenstein order. After possibly conjugating $A$ by a permutation matrix, we may and will assume that we have
\[
{\rm v}(A)=
\begin{bsmallmatrix}
0 & a & a+b \\
b+c & 0 & b \\
c & c+a & 0
\end{bsmallmatrix}
\]
with non-negative integers $a$, $b$, and $c$. Since $A$ is basic, not all of $a$, $b$, and $c$ are zero.
\end{proof}

\begin{proposition} \label{prop:hereditary of size 3}
Let $A$ be an $\N$-graded Gorenstein tiled order with $|\mathbb{I}|=3$ and $V$ the standard tilting object in $\ul{\CM \!}^{\Z} A$.
\begin{itemize}
\item[a)] The global dimension of $\Ga$ is less than or equal to $2$.
\item[b)] If $A$ is basic, then $\Ga$ is hereditary if and only if $A$ is conjugate to the tiled order determined by the exponent matrix
\[
\begin{bsmallmatrix}
0 & 0 & 0 \\
1 & 0 & 0 \\
1 & 1 & 0
\end{bsmallmatrix} \ko
\begin{bsmallmatrix}
0 & 0 & 0 \\
2 & 0 & 0 \\
2 & 2 & 0
\end{bsmallmatrix} \ko or
\begin{bsmallmatrix}
0 & 1 & 2 \\
2 & 0 & 1 \\
1 & 2 & 0
\end{bsmallmatrix} \: .
\]
\end{itemize}
\end{proposition}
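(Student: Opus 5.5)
For part a), the plan is to reduce to the basic case and then quote Proposition~\ref{prop:global dimension 2}. Replacing $A$ by a graded Morita equivalent basic $\N$-graded Gorenstein tiled order does not change $|\mathbb{I}|$. Because $V$ is basic, it also does not change $\Ga$ up to Morita equivalence; this is the same reduction used at the start of the proof of Theorem~\ref{thm:tilting object}. By Lemma~\ref{lem:rank 3}, a basic $A$ with $|\mathbb{I}|=3$ is of the form $A(m_1,m_2,m_3)=A(a,b,c)$. Proposition~\ref{prop:global dimension 2} then gives $\mathrm{gl.dim}\,\Ga\leq 2$.

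For part b), I would use the description of $\Ga\simeq k\V_A\op$ from part~b) of Theorem~\ref{thm:tilting object}, together with the local structure found in the proof of Proposition~\ref{prop:global dimension 2}. For each $x={\rm v}(e_iA(j)_{\geq 0})\in\V_A$, the elements strictly below $x$ have at most two maximal elements. These are $y={\rm v}(e_iA(j+1)_{\geq 0})$ and $z={\rm v}(e_{i-1}A(j+m_{i-1})_{\geq 0})$, and the only candidate for a common lower element is $w={\rm v}(e_{i-1}A(j+1+m_{i-1})_{\geq 0})$. I would therefore use the criterion of \cite{IyamaMarczinzik22} in the form: $k\V_A\op$ is hereditary if and only if no such square $w<y,z<x$ with $y\neq z$ occurs in $\V_A$. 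The reason is that a square gives a nonzero $\Ext^2$ between simples, while with no square every interval is a chain, so the Hasse quiver has no two parallel paths. The problem then becomes a numerical condition on $(a,b,c)$: hereditarity holds exactly when, for every $i,j$, at least one of $y,z,w$ lies outside $\V_A$ or $y=z$. Here membership in $\V_A$ is controlled by whether the shift index is below the relevant $b$-invariant, and the $b$-invariants of $A(a,b,c)$ are the sums $a+b$, $b+c$, $c+a$.

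The case analysis is up to cyclic rotation of $(a,b,c)$, which is conjugation by a permutation matrix, and up to the conjugations of Proposition~\ref{prop:conjugacy}. Case 1: two of $a,b,c$ vanish, say $(0,0,c)$. This is Example~\ref{ex:poset} with $n=3$ and $m=c$, so $\V_A\op\simeq A_{c-1}\times A_2$. This poset is hereditary if and only if $c\leq 2$, which produces the first two listed matrices. Case 2: exactly one entry vanishes, say $(a,b,0)$ with $a,b\geq 1$. Here I would exhibit an explicit square, taking $i$ to be the index whose $b$-invariant is largest and $j=1$. In the borderline case $(1,1,0)$ the square may be absent; there I would instead check directly that $A$ is conjugate to one of the listed forms, or else that a square still appears. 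Case 3: $a,b,c\geq 1$. I expect $(1,1,1)$, the third matrix, to be the only hereditary one. For $\max(a,b,c)\geq 2$, I would again produce a square $w<y,z<x$ at the index $i$ with $m_{i-1}$ maximal. The converse direction, that the three listed matrices give hereditary $\Ga$, is a direct computation of $\V_A$ in each case.

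The main obstacle is the boundary bookkeeping in Cases 2 and 3. One has to decide precisely when each of $y$, $z$, $w$ survives, since the truncation ${}_{\geq 0}$ may make it equal to ${\rm v}(e_{i'}A)$ or to $0$. One must also identify which non-basic-looking or conjugate triples, such as $(1,1,0)$, collapse onto the listed normal forms via Proposition~\ref{prop:conjugacy}.
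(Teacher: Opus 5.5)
\textbf{Where the proposal agrees with the paper.} Part a) is exactly the paper's argument. For b), your framework is also the paper's: reduce to $A(a,b,c)$, then use the two candidate lower covers $y={\rm v}(e_iA(j+1)_{\geq 0})$ and $z={\rm v}(e_{i-1}A(j+m_{i-1})_{\geq 0})$ from the proof of Proposition~\ref{prop:global dimension 2}.

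\textbf{The gap.} The numerical case analysis, which is the whole content of b), goes wrong.
\begin{itemize}
\item In Case 2 you expect a square for every $(a,b,0)$ with $a,b\geq 1$ other than $(1,1,0)$. This fails for $(1,2,0)$. Its exponent matrix is $\begin{bsmallmatrix}0&1&3\\2&0&2\\0&1&0\end{bsmallmatrix}$, all $b$-invariants are positive, and $\V_A=\{[0,0,0],[0,0,1],[0,0,2],[1,0,1]\}$ is a star centred at $[0,0,1]$. So $\Ga$ is hereditary of type $D_4$, and the square you plan to exhibit does not exist.
\item The forward implication survives only because $A(1,2,0)$ is diagonally conjugate to $A(1,1,1)$. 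In fact all $A(a,b,c)$ with the same $a+b+c$ are diagonally conjugate.
\item Your recipes for locating squares also fail in cases that really are non-hereditary. For $(1,3,0)$ the largest $b$-invariant is $d(1,3)=4$. Yet neither ${\rm v}(e_1A(1)_{\geq 0})=[0,0,3]$ nor ${\rm v}(e_3A(1)_{\geq 0})=[0,0,0]$ carries a square; it sits at ${\rm v}(e_2A(1)_{\geq 0})=[2,0,2]$.
\item For $(1,1,2)$, the index with $m_{i-1}$ maximal is $i=1$, but row $1$ only produces $[0,0,1]$ and $[0,0,0]$. The square sits at ${\rm v}(e_3A(1)_{\geq 0})=[1,2,0]$.
\end{itemize}

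\textbf{The missing computation.} The element $y$ is $x$ with every coordinate lowered by $1$ (truncated at $0$). The element $z$ is $x$ with its $(i-1)$-th coordinate replaced by $0$. So at $x={\rm v}(e_iA(j)_{\geq 0})$, the elements $y$ and $z$ are incomparable, i.e.\ $x$ has two lower covers, exactly when $m_i\geq j+1$ and $m_i+m_{i+1}\geq j+2$. It therefore suffices to test $j=1$ with $m_i$ large, not $m_{i-1}$.

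\textbf{How the paper does it.} The paper avoids your zero-pattern split. It rotates so that $c$ is maximal. If $c\geq 2$, the element $[c-1,c+a-1,0]={\rm v}(e_3A(1)_{\geq 0})$ forces $c+a\leq 2$, hence $(c,a)=(2,0)$. Then the element $[b+1,0,b-1]$ forces $b\leq 1$. The survivors $(0,0,1)$, $(1,0,1)$, $(0,1,1)$, $(1,1,1)$, $(0,0,2)$, $(0,1,2)$ are each conjugate to a listed matrix.

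\textbf{Two further cautions.}
\begin{itemize}
\item You may not run the case analysis modulo the moves of Proposition~\ref{prop:conjugacy}, since they change $\V_A$ and heredity is not invariant under them. For example, $\diag(1,x,x^2)$ conjugates $A(1,1,1)$ to $A(0,0,3)$, whose $\V_A\op\cong A_2\times A_2$ is not hereditary. Only cyclic rotations are harmless.
\item The same example shows that checking the three listed matrices is all your converse can achieve. The ``if'' direction fails for arbitrary conjugates, and the paper's proof likewise establishes only the forward implication.
\end{itemize}
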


\begin{proof}
a) After possibly replacing $A$ with a graded Morita equivalent $\N$-graded Gorenstein tiled order, we may and will assume that it is basic. Then the statement follows from Lemma~\ref{lem:rank 3} and Proposition~\ref{prop:global dimension 2}.

b) We assume that ${\rm v}(A)$ is of the form in Lemma~\ref{lem:rank 3} satisfying $c\geq a$ and $c\geq b$. Then the integer $c$ must be positive. If $c$ equals $1$, then $a$ and $b$ equal $0$ or $1$. One easily checks that in this case $A$ is conjugate to one of the desired tiled orders. We now consider the case that we have $c\geq 2$. Since $\Ga$ is hereditary, by the implication from ii) to i) in Corollary~\ref{cor:realization of quivers}, for any element $x$ of $\V_A$ except the minimum, the subposet $\{y \in \V_A \mid y<x\}$ has a unique maximal element. Since the elements
$\begin{bmatrix}
c-2 & c+a-2 & 0
\end{bmatrix}$
and
$\begin{bmatrix}
c-1 & 0 & 0
\end{bmatrix}$
are the only possible maximal elements in the subposet
\[
\{x \in \V_A \mid x<
\begin{bmatrix}
c-1 & c+a-1 & 0
\end{bmatrix}
\}
\]
but their first components are different, we must have $c+a-2 \leq 0$. This implies that we have $c=2$ and $a=0$. If $b$ equals $0$ (respectively, $1$), then $A$ is conjugate to the second (respectively, the third) desired tiled orders. If we have $b\geq 2$, then the elements
$\begin{bmatrix}
b & 0 & b-2
\end{bmatrix}$
and
$\begin{bmatrix}
0 & 0 & b-1
\end{bmatrix}$
are the only possible maximal elements in the subposet
\[
\{x \in \V_A \mid x<
\begin{bmatrix}
b+1 & 0 & b-1
\end{bmatrix}
\}
\]
but their third components are different, we must have $b \leq 0$. This contradicts to the hypothesis. This concludes the proof.
\end{proof}



\def\cprime{$'$} \def\cprime{$'$}
\providecommand{\bysame}{\leavevmode\hbox to3em{\hrulefill}\thinspace}
\providecommand{\MR}{\relax\ifhmode\unskip\space\fi MR }
\providecommand{\MRhref}[2]{%
  \href{http://www.ams.org/mathscinet-getitem?mr=#1}{#2}
}
\providecommand{\href}[2]{#2}

\end{document}